\newcommand{\ovl}{\overline{l}}
\newcommand{\ovlt}{\overline{l}_\tau}
\newcommand{\ovlth}{\overline{l}_{\tau h}}
\newcommand{\ovlsigma}{\overline{l}_\sigma}
\newcommand{\ovls}{\overline{l}_\sigma}
\newcommand{\ovv}{\overline{\varphi}}
\newcommand{\ovvt}{\overline{\varphi}_\tau}
\newcommand{\ovvth}{\overline{\varphi}_{\tau h}}
\newcommand{\ovvs}{\overline{\varphi}_\sigma}
\newcommand{\ovd}{\overline{d}}
\newcommand{\ovdt}{\overline{d}_\tau}
\newcommand{\ovdth}{\overline{d}_{\tau h}}
\newcommand{\ovds}{\overline{d}_\sigma}
\newcommand{\ovz}{\overline{z}}
\newcommand{\ovzt}{\overline{z}_\tau}
\newcommand{\ovzth}{\overline{z}_{\tau h}}
\newcommand{\ovzs}{\overline{z}_\sigma}
\newcommand{\ovp}{\overline{p}}
\newcommand{\ovpt}{\overline{p}_\tau}
\newcommand{\ovpth}{\overline{p}_{\tau h}}
\newcommand{\ovps}{\overline{p}_\sigma}
\newcommand{\Hzwei}{H^2(\Omega)}
\newcommand{\Heinsnull}{H^1_0(\Omega)}
\newcommand{\Lzwo}{L^2(\Omega)}
\newcommand{\Lzweizwei}{L^2(0,T;\Lzwo)}
\newcommand{\Leinszwei}{L^1(0,T;\Lzwo)}
\newcommand{\Heinszwei}{H^1(\I;\Lzwo)}
\newcommand{\I}{0,T}
\newcommand{\Xtau}{X^0_\tau}
\newcommand{\Vtau}{V^0_\tau}
\newcommand{\B}{\bm{B}}
\newcommand{\vt}{\varphi_\tau}
\newcommand{\dt}{d_\tau}
\newcommand{\evt}{e_\tau^\varphi}
\newcommand{\edt}{e_\tau^d}
\newcommand{\etavt}{\eta_\tau^\varphi}
\newcommand{\xivt}{\xi_\tau^\varphi}
\newcommand{\etadt}{\eta_\tau^d}
\newcommand{\etazt}{\eta_\tau^z}
\newcommand{\etapt}{\eta_\tau^p}
\newcommand{\xidt}{\xi_\tau^d}
\newcommand{\zt}{z_\tau}
\newcommand{\pt}{p_\tau}
\newcommand{\Vh}{V_h^1}
\newcommand{\Xh}{X^1_h}
\newcommand{\Vth}{V^{0,1}_{\tau h}}
\newcommand{\Xth}{X^{0,1}_{\tau h}}
\newcommand{\vth}{\varphi_{\tau h}}
\newcommand{\dth}{d_{\tau h}}
\newcommand{\evh}{e_h^\varphi}
\newcommand{\edh}{e_h^d}
\newcommand{\pth}{p_{\tau h}}
\newcommand{\zth}{z_{\tau h}}
\newcommand{\etavh}{\eta_h^\varphi}
\newcommand{\xivh}{\xi_h^\varphi}
\newcommand{\etadh}{\eta_h^d}
\newcommand{\xidh}{\xi_h^d}
\newcommand{\xiph}{\xi^p_h}
\newcommand{\xizh}{\xi^z_h}
\newcommand{\etazh}{\eta^z_h}
\newcommand{\etaph}{\eta^p_h}
\newcommand{\ezt}{e_\tau^z}
\newcommand{\ept}{e_\tau^p}
\newcommand{\ezth}{e_{\tau h}^z}
\newcommand{\epth}{e_{\tau h}^p}
\newtheorem{theorem}{Theorem}[section]
\newtheorem{proposition}[theorem]{Proposition}
\newtheorem{corollary}[theorem]{Corollary}
\newtheorem{lemma}[theorem]{Lemma}
\newtheorem{remark}[theorem]{Remark}
\newtheorem{assumption}[theorem]{Assumption}
\begin{document}
\title[Optimal control of a linear PDE-ODE system]{A priori error estimates for the space-time finite element discretization of an optimal control problem governed by a coupled linear PDE-ODE system}
\author{Marita Holtmannspötter$^1$}
\author{Arnd Rösch$^1$}
\address[$^1$]{Faculty of Mathematics, University of Duisburg-Essen, Essen, Germany}
\email{marita.holtmannspoetter@uni-due.de, arnd.roesch@uni-due.de}
\email{vexler@ma.tum.de}
\author{Boris Vexler$^2$}
\address[$^2$]{Department of Mathematics, Technical University of Munich, Garching, Germany} 
\keywords{ optimal control, error estimates, finite elements, coupled PDE-ODE system, damage material model }
\subjclass{49M25,65J10,65M15,65M60}
\maketitle
\begin{abstract}
In this paper we investigate a priori error estimates for the space-time Galerkin finite element discretization of an optimal control problem governed by a simplified linear gradient enhanced damage model. The model equations are of a special structure as the state equation consists of an elliptic PDE which has to be fulfilled at almost all times coupled with an ODE that has to hold true in almost all points in space. The state equation is discretized by a piecewise constant discontinuous Galerkin method in time and usual conforming linear finite elements in space. For the discretization of the control we employ the same discretization technique which turns out to be equivalent to a variational discretization approach. We provide error estimates of optimal order both for the discretization of the state equation as well as for the optimal control. Numerical experiments are added to illustrate the proven rates of convergence. 
\end{abstract}

\section{Introduction}
In this paper, we derive a priori error estimates for the space-time finite element discretization of a simplified linear gradient enhanced damage model and the associated optimal control problem. To be more specific, we investigate the finite element approximation of the optimal control problem
\[J(\varphi,d,l)=\frac{1}{2}\Vert \varphi-\varphi_d\Vert^2_{\Lzweizwei}+\frac{1}{2}\Vert d-d_d\Vert^2_{\Lzweizwei}+\frac{\alpha_l}{2}\Vert l\Vert^2_{\Lzweizwei}\]
subject to the state equation
\begin{align}
\label{mod:linmodbegin} -\alpha \Delta \varphi (t)+\beta \varphi(t)&=\beta d(t)+l(t)  \quad \text{ in } \Omega\\ 
\varphi(t)&=0 \quad \text{ on } \partial\Omega \\
 \partial_t d(t)&=-\frac{\beta}{\delta}(d(t)-\varphi (t))  \quad \text{ a. e. in } \Omega \label{mod: ode}\\
d(0)&=d_0 \label{mod:linmodend}
\end{align}
for almost all $t\in I=[\I]$ where $l$ acts as a control and $\varphi$ and $d$ are the resulting states. A precise formulation is given in the later sections. For the discretization of the state equation we will use a discontinuous piecewise constant finite element method in time and usual $H^1$-conforming linear finite elements in space. The state equation is motivated by a specific gradient enhanced damage model, first developed in \cite{DH08,DH11} and thoroughly analyzed from a mathematical point of view in \cite{MS16i,MS16ii}. First of all, this model describes the displacement of a body $\Omega$ influenced by a given force $l$. In addition, the model features two damage variables $\varphi$ and $d$ where the first one is more regular in space whereas the second one carries the evolution of damage in time. Both are coupled by a penalty term in the free energy functional with $\beta$ being the penalty parameter. The parameter $\alpha$ originates from the gradient enhancement while $\delta$ is a viscosity parameter (see \cite{MS16i} for details). The resulting system consists of two nonlinear PDEs which have to hold true in almost all time points and an ODE that should be fulfilled in almost every point in space. All three equations are fully coupled with each other. For a first analysis of the discretization of such a model we simplified the underlying PDE system, skipping the displacement variable $u$ as well as the nonlinear material function. In this first contribution we will study the linear equation \eqref{mod: ode} instead of a nonsmooth equation. This linear model problem is interesting in its own way as the equations still have the special structure of the original damage model which differs from other coupled PDE-ODE systems examined in related work, see below. The aim of this paper is to establish a priori  discretization error estimates. The more difficult nonsmooth equation will be subject of later work.  \\
The optimal control problem is formulated with a tracking type functional. To derive necessary optimality conditions, we investigate the control-to-state operator $S\colon l\mapsto (\varphi,d)$. For the discretization we employ a variational discretization technique based on \cite{MH05}. \\
Let us have a look at related work: There are quite a few contributions available regarding the optimal control of coupled PDE-ODE systems, cf. \cite{KGH18,BKR17,MST16,KG16,KG15,CPetal2009} and the references therein. The authors mainly focus on the analysis of their specific model and the derivation of first order necessary optimality conditions and provide tailored algorithms for the numerical solution of the optimal control problems. They do not derive discretization error estimates. In \cite{HV03}, the authors deal with the optimal control of laser surface hardening of steel and provide error estimates for a POD Galerkin approximation of the state equation. Error estimates for the optimal control of a coupled PDE-ODE system describing the velocity tracking problem for the evolutionary Navier–Stokes equations are derived in \cite{CC12,CC16} as well as companion papers. Here, the authors require a coupling of the discretization parameters in time and space for the well-posedness of their discretization technique. We emphasize, that in our contribution the discretization parameters can be chosen independently of one another. Our discretization setting is closely related to the techniques analyzed in \cite{MV08,NV11,MV18} for the space-time discretization of linear and semilinear parabolic optimal control problems, respectively. In these contributions the optimal control problem is not constrained by a coupled PDE-ODE system but rather by a single parabolic PDE. Therefore only one variable which carries the evolution in both space and time is considered. Error estimates for uncontrolled parabolic equations are given in \cite{EJT85,EJ91,EJ95}. \\
  
The paper is organized as follows: In section two we state the precise setting of the state equation and present the chosen discretization strategy. In section three we focus on the numerical analysis of the state equation and prove convergence of first order with respect to time and convergence of second order with respect to space of our discretization technique. We will have a look at the associated optimal control problem in section four and provide error estimates for the optimal control. The last section presents numerical examples for both the state equation and the optimal control problem. \\ 
\section{Mathematical preliminaries}
In this section we establish the principal assumptions on the data, the used notation and the chosen discretization strategy. \\

Throughout this paper, let $\Omega\subset \mathbb{R}^N,N\in\{2,3\}$, be a convex polygonal domain with boundary $\partial \Omega$ and let $T>0$ be a given real number. The time interval will be denoted by $I:=(\I)$. Moreover, let $\alpha,\beta,\delta>0$ be given parameters. The initial state $d_0$ is, unless otherwise stated, a function in $\Lzwo$. The right-hand side $l$ should belong to $\Lzweizwei$. The first state $\varphi$, also referred to as nonlocal state (see \cite{DH08}), is an element of the state space $V:=L^2(\I;\Heinsnull)$. The second state $d$, also called local state, should belong to $X:=H^1(\I;\Lzwo)$. \\

We use the following short notation for inner products and norms on $\Lzwo$ and $\Lzweizwei$:
\begin{alignat*}{2}
(v,w)&:=(v,w)_{\Lzwo}, \qquad (v,w)_{I\times\Omega} &&:=(v,w)_{\Lzweizwei}, \\
\Vert v\Vert &:=\Vert v\Vert_{\Lzwo}, \qquad \qquad \Vert v\Vert_{I\times\Omega} &&:= \Vert v\Vert_{\Lzweizwei}. \\
\end{alignat*}
The norm on $L^\infty(\I;\Lzwo)$ will be abbreviated by $\Vert v\Vert_{\infty,2}$. \\
We will work with the weak formulation of the problem which reads as follows: For a given right-hand side $l\in\Lzweizwei$ and initial state $d_0\in\Lzwo$ find states $(\varphi,d)\in V\times X$ satisfying
\begin{equation} \label{eq:weakformcont}
B((\varphi,d),(\psi,\lambda))=(l,\psi)_{I\times\Omega} \qquad \forall (\psi,\lambda)\in V\times X
\end{equation}
and the initial condition $d(0)=d_0$ where the bilinear form $B$ is given as
\begin{equation} \label{eq:bilinformcont}
B((\varphi, d),(\psi,\lambda))=\alpha (\nabla \varphi,\nabla\psi)_{I\times\Omega}-\beta(d-\varphi,\psi)_{I\times\Omega}+(\partial_t d,\lambda)_{I\times\Omega}+\frac{\beta}{\delta}(d-\varphi,\lambda)_{I\times\Omega}.
\end{equation}

For the discretization in time we will employ discontinuous piecewise constant finite elements. Therefore, we consider a partition of the time interval $\overline{I}=[\I]$ as
\[\overline{I}=\{0\}\cup I_1\cup ... \cup I_M\]
with subintervals $I_m=(t_{m-1},t_m]$ of length $\tau_m$ and time points
\[0=t_0<t_1<...<t_{M-1}<t_M=T.\]
We set $\tau:=\max \{\tau_m : m=1,...,M\}$. The semidiscrete trial and test spaces are given as
\[\Vtau:=\{v_\tau\in V: v_{\tau\vert_{I_m}}\in\mathbb{P}_0(I_m;\Heinsnull), m=1,...,M\},\]
\[\Xtau:=\{d_\tau\in \Lzweizwei : d_{\tau\vert_{I_m}}\in\mathbb{P}_0(I_m;\Lzwo), m=1,...,M\}.\]
Note, that $\Vtau\subset V$ but $\Xtau\not\subset X$. Moreover, $\Vtau$ is dense in $\Xtau$ due to the dense embedding of $\Heinsnull\overset{\text{d}}{\hookrightarrow}\Lzwo$. We use the notation
\[(v,w)_{I_m\times\Omega}:=(v,w)_{L^2(I_m;\Lzwo)} \qquad \text{ and } \qquad \Vert v\Vert_{I_m\times\Omega} := \Vert v\Vert_{L^2(I_m;\Lzwo)}.\]
To express the jumps possibly occurring at the nodes $t_m$ we define
\[v^+_{\tau,m}:=\lim\limits_{t\rightarrow 0^+} v_{\tau}(t_m+t), \quad v^-_{\tau,m}:=\lim\limits_{t\rightarrow 0^+} v_{\tau}(t_m-t)=v_\tau(t_m), \quad [v_\tau]_m=v^+_{\tau,m}-v^-_{\tau,m}.\]
Note, that for functions piecewise constant in time the definition reduces to
\[v^+_{\tau,m}=v_\tau(t_{m+1})=:v_{\tau,m+1}, \qquad v^-_{\tau,m}=v_{\tau}(t_m)=: v_{\tau,m},\quad [v_\tau]_m=v_{\tau,m+1}-v_{\tau,m}.\]
The semidiscrete bilinear form $\B:(\Vtau\times\Xtau)^2\to\mathbb{R}$ is given as
\begin{align*}
\B((\varphi_\tau,d_\tau),(\psi,\lambda))&=\alpha(\nabla \varphi_\tau,\nabla \psi)_{I\times\Omega}-\beta (d_\tau-\varphi_\tau,\psi)_{I\times\Omega} \\
&+\sum\limits_{m=1}^M(\partial_t d_\tau,\lambda)_{I_m\times\Omega}+\frac{\beta}{\delta}(d_\tau-\varphi_\tau,\lambda)_{I\times\Omega}+\sum\limits_{m=2}^M([d_\tau]_{m-1},\lambda_{m-1}^+)+(d_{\tau,0}^+,\lambda^+_0).
\end{align*} 
Then, the semidiscrete state equation reads as follows: Find states $(\vt,\dt)\in\Vtau\times\Xtau$ such that
\begin{equation} \label{eq:semidiscretestatelinear}
\B((\varphi_\tau,d_\tau),(\psi,\lambda))=(l,\psi)_{I\times\Omega}+(d_0,\lambda^+_0)
\end{equation}
holds true for all $(\psi,\lambda)\in \Vtau\times\Xtau$. \\

We will require the interpolation/projection onto $\Xtau$ and $\Vtau$, respectively.  Therefore, we define the semidiscrete interpolation operator $\pi_\tau\colon C(\overline{I};\Lzwo)\to\Xtau$ with $\pi_\tau d_{\vert_ {I_m}}\in \mathbb{P}_0(I_m;\Lzwo)$ via $(\pi_\tau d)(t_m)=d(t_m)$ for $m=1,...,M$. For the projection we employ the standard $L^2$-projection in time $P_\tau \colon \Lzweizwei\to\Xtau$ given by $P_\tau\varphi_{\vert_{I_m}}:=\frac{1}{\tau_m}\int\limits_{I_m} \varphi(t)dt$. Both operators will always be denoted by the same symbols despite possibly different domains and ranges. Note, that if $\varphi\in V$ then $P_\tau\varphi\in\Vtau$ as integration in time preserves the spatial regularity due to the definition of the Bochner integral. In particular, we have \begin{equation} \label{eq:nablaPtvarphi}(\varphi-P_\tau\varphi,\psi)_{I\times\Omega}=(\nabla \varphi-\nabla P_\tau\varphi,\nabla\psi)_{I\times\Omega}=0\end{equation} for any $\psi\in\Vtau$.\\

Next, we introduce the spatial discretization. We use $H^1$-conforming finite elements in space. Thus, we consider a quasi-uniform mesh $\mathbb{T}_h$ of shape regular triangles $\mathcal{T}$, which do not overlap and cover the domain $\Omega$. By $h_\mathcal{T}$ we denote the size of the triangle $\mathcal{T}$ and $h$ is the maximal triangle size. On the mesh $\mathbb{T}_h$ we construct two conforming finite element spaces 
\[\Vh=\{v\in C(\overline{\Omega}) : v_{\vert \mathcal{T}}\in \mathbb{P}_1(\mathcal{T}), \mathcal{T}\in\mathbb{T}_h, v_{\vert\partial\Omega}=0\},\]
\[\Xh=\{v\in C(\overline{\Omega}) : v_{\vert \mathcal{T}}\in \mathbb{P}_1(\mathcal{T}), \mathcal{T}\in\mathbb{T}_h\}.\]
Then, the space-time discrete finite element spaces are given by
\[\Vth=\{v\in L^2(\I;\Vh) : v_{\vert I_m}\in\mathbb{P}_0(I_m;\Vh)\}\subset \Vtau,\]
\[\Xth=\{v\in L^2(\I;\Xh) : v_{\vert I_m}\in\mathbb{P}_0(I_m;\Xh)\}\subset \Xtau.\]
The time-space discretized state equation then reads as follows: Find states $(\vth,\dth)\in\Vth\times\Xth$ such that
\begin{equation} \label{eq:fullydiscretestate}
\B((\vth,\dth),(\psi,\lambda))=(l,\psi)_{I\times\Omega}+(d_0,\lambda^+_0)
\end{equation}
holds true for all $(\psi,\lambda)\in\Vth\times\Xth$. Note, that although we set $X=\Heinszwei$ as the state space for $d$ we choose a piecewise linear and continuous approximation in space for $d$. This is due to the fact, that we will show higher spatial regularity of $d$ in the next section, that is $d(t)\in\Hzwei$ for all $t\in[\I]$, provided that the initial datum $d_0$ possesses this regularity. \\

For the projection onto $\Vth$ and $\Xth$ we work with the standard $L^2$-projections $P^V_h\colon\Lzwo\to\Vh,P^X_h\colon\Lzwo\to\Xh$ in space on each subinterval $I_m$ and define the time-space projections $\pi^V_h\colon\Vtau\to\Vth,\pi^X_h\colon\Xtau\to\Xth$ via $(\pi^V_h z)(t)=P^V_h(z(t))$ and $(\pi^X_h z)(t)=P^X_h(z(t))$ respectively. Furthermore, based on the Ritz-projection $R_h\colon\Heinsnull\to \Vh$ given as usual by
\[(\nabla R_h\varphi,\nabla\psi)=(\nabla\varphi,\nabla\psi) \qquad \forall \psi\in\Vh\]
we define an alternative time-space projection onto $\Vth$ via $\rho_h\colon\Vtau\to\Vth, (\rho_h z)(t)=R_h(z(t))$.
\section{Numerical Analysis of the state equation}
\subsection{The continuous problem}
We first address the unique solvability of \eqref{eq:weakformcont}.
\begin{proposition} \label{prop:solvweakformcont}
For a fixed right-hand side $l\in\Lzweizwei$ and initial state $d_0\in \Lzwo$ there exists a unique solution $(\varphi,d)\in V\times X$ of equation \eqref{eq:weakformcont}. The solution exhibits the improved regularity
\begin{align*}
\varphi \in &L^2(\I; H^2(\Omega)\cap H^1_0(\Omega)) \\
d \in &\Heinszwei \hookrightarrow C(\overline{I};\Lzwo).
\end{align*}
Furthermore, if $d_0\in\Hzwei$, then we have $d\in H^1(\I;\Hzwei)$.
\end{proposition}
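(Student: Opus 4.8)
The plan is to establish existence and uniqueness first, and then bootstrap regularity. For the well-posedness of \eqref{eq:weakformcont}, I would exploit the special decoupled-in-structure nature of the system. The natural strategy is a fixed-point or Galerkin argument: given $d \in X$, the first equation $-\alpha\Delta\varphi(t) + \beta\varphi(t) = \beta d(t) + l(t)$ is, for a.e.\ $t$, a standard elliptic problem with right-hand side $\beta d(t) + l(t) \in \Lzwo$, so by Lax--Milgram it has a unique solution $\varphi(t) \in \Heinsnull$ depending linearly and boundedly on $(d(t), l(t))$; measurability in $t$ follows from the linearity and the fact that $d, l \in \Lzweizwei$, giving $\varphi \in V$. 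Substituting this $\varphi = \varphi(d)$ into \eqref{mod: ode} yields an ODE in the Hilbert space $\Lzwo$ of the form $\partial_t d = -\frac{\beta}{\delta}(d - \varphi(d))$ with $d(0) = d_0$; since $d \mapsto \varphi(d)$ is an affine bounded map on $\Lzweizwei$ (indeed pointwise in time), this is a linear ODE with bounded (in fact, contraction-friendly) right-hand side, to which the Picard--Lindel\"of theorem in Banach spaces applies, giving a unique $d \in H^1(\I;\Lzwo) = X$. Alternatively, and perhaps more in the spirit of the later discrete analysis, one can run a Faedo--Galerkin scheme in the spatial variable, derive a priori bounds by testing with $(\varphi, d)$ and integrating in time, and pass to the limit; uniqueness then follows from the linearity by testing the difference equation with itself and applying Gr\"onwall.

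For the improved regularity, I would argue as follows. Once $d \in X \hookrightarrow C(\overline{I};\Lzwo)$ is known (the embedding being the standard one for $H^1$-in-time Bochner spaces), the elliptic equation for $\varphi(t)$ has right-hand side $\beta d(t) + l(t)$, and for a.e.\ $t$ this lies in $\Lzwo$; by elliptic regularity on the convex polygonal domain $\Omega$ (so that $H^2$-regularity holds for the Dirichlet Laplacian), we get $\varphi(t) \in \Hzwei \cap \Heinsnull$ with $\|\varphi(t)\|_{\Hzwei} \lesssim \|d(t)\| + \|l(t)\|$; squaring and integrating in $t$ yields $\varphi \in L^2(\I; \Hzwei \cap \Heinsnull)$. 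The statement $d \in \Heinszwei$ is already part of $d \in X$, so there is nothing further to prove there.

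For the final claim, suppose $d_0 \in \Hzwei$. Here I would use that $\varphi$ gains two spatial derivatives over $d$: writing the elliptic solve as $\varphi(t) = (\beta - \alpha\Delta)^{-1}(\beta d(t) + l(t))$, one expects $\varphi(t) - d(t)$ to inherit the $\Hzwei$-regularity of $d(t)$ together with the $\Hzwei$-regularity gained from $l(t)$. More directly, from \eqref{mod: ode} we have $\partial_t d(t) = -\frac{\beta}{\delta}(d(t) - \varphi(t))$ pointwise in $\Lzwo$; if one can show that $d(t) - \varphi(t) \in \Hzwei$ with a bound uniform (or $L^2$) in $t$, then $\partial_t d \in L^2(\I;\Hzwei)$, and combined with $d(0) = d_0 \in \Hzwei$ this gives $d \in H^1(\I;\Hzwei)$ by integrating $d(t) = d_0 + \int_0^t \partial_s d(s)\,ds$ in the $\Hzwei$-topology. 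To close the loop, apply the Laplacian formally: set $w = -\Delta d$ (with appropriate interpretation); differentiating \eqref{mod: ode} in the sense of distributions and using the elliptic relation to express $-\Delta\varphi$ in terms of $\varphi$ and $d$, one obtains an ODE for $w$ with right-hand side controlled by $\Delta d$, $\Delta \varphi$ and hence by data, to which Gr\"onwall applies starting from $w(0) = -\Delta d_0 \in \Lzwo$.

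The main obstacle will be making the last regularity bootstrap rigorous: the formal manipulation ``apply $-\Delta$ to the ODE and use elliptic regularity'' needs to be justified, since $\varphi$ solves a PDE only for a.e.\ $t$ and $d$ is only $H^1$ in time a priori, so one must be careful about the order of differentiation in time and space. The clean way is to first establish the $L^2(\I;\Hzwei)$-bound on $d - \varphi$ (equivalently on $\partial_t d$) by a difference-quotient-in-time argument at the level of the ODE combined with the elliptic estimate, which avoids differentiating the PDE and keeps everything within the regularity already proven. Once $\partial_t d \in L^2(\I;\Hzwei)$ is in hand, the conclusion $d \in H^1(\I;\Hzwei)$ is immediate from $d_0 \in \Hzwei$ and the fundamental theorem of calculus for Bochner integrals.
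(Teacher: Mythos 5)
Your proposal follows essentially the same route as the paper for the main claims: the paper likewise freezes time in the elliptic equation to obtain a solution operator $\Phi\colon(l,d)\mapsto\varphi$ mapping $\Lzwo\times\Lzwo$ into $\Hzwei\cap\Heinsnull$ with $\Vert\varphi\Vert_{\Hzwei}\leq C\{\Vert d\Vert+\Vert l\Vert\}$, reduces the system to the ODE $\partial_t d=-\frac{\beta}{\delta}(d-\Phi(l(\cdot),d))$ in $\Lzwo$, applies Picard--Lindel\"of in abstract function spaces to get a unique $d\in X$, and obtains $\varphi\in L^2(\I;\Hzwei\cap\Heinsnull)$ by squaring and integrating the elliptic estimate. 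The one place where you genuinely diverge is the final claim $d_0\in\Hzwei\Rightarrow d\in H^1(\I;\Hzwei)$, which you identify as the main obstacle and attack via a formal application of $-\Delta$ to the ODE plus a Gr\"onwall/difference-quotient argument. The paper's treatment is much shorter and avoids the difficulty you worry about: since $\Phi$ already maps $\Lzwo\times\Lzwo$ into $\Hzwei$ (with the $\Hzwei$-norm controlled by the $\Lzwo$-norms of the data), the reduced right-hand side $f(t,d)=-\frac{\beta}{\delta}(d-\Phi(l(t),d))$ is a well-defined Lipschitz map when viewed on $\Hzwei$, so one simply reruns the same Picard--Lindel\"of fixed-point argument in the smaller space $\Hzwei$ starting from $d_0\in\Hzwei$; no differentiation of the PDE in space or time is needed. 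Your bootstrap would also close (the Gr\"onwall bound on $\Vert d(t)\Vert_{\Hzwei}$ you sketch is essentially the integral form of that fixed-point iteration), but it is more machinery than the problem requires, and the clean observation that makes it trivial is precisely the smoothing property of $\Phi$ that you already used for the $\varphi$-regularity.
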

\begin{proof}
The proof is similar to the proof of a corresponding result for the original damage model, see \cite{MS16i,S17}. We only sketch the essential steps as we will need the notation later. If we skip the dependence of time in \eqref{mod:linmodbegin} it is well known (cf. \cite{GV85,LE98}), that there exists a unique solution $\varphi=\Phi(l,d)\in\Hzwei\cap\Heinsnull$ for every $d,l\in\Lzwo$ and 
\begin{equation} \label{eq:abschvarphicont}
\Vert\varphi\Vert_{\Hzwei}\leq C\{\Vert d\Vert_{\Lzwo}+\Vert l\Vert_{\Lzwo}\}
\end{equation} 
holds true with a constant $C>0$ only depending on the problem data but not on $d$ or $l$. Here, $\Phi\colon \Lzwo\times\Lzwo\to\Heinsnull,\Phi\colon (l,d)\mapsto \varphi,$ denotes the solution operator of the elliptic PDE
\begin{equation} \label{eq:weakvarphicont}
\alpha(\nabla\varphi,\nabla \psi)+\beta(\varphi,\psi)=(\beta d+l,\psi) \quad \forall \psi\in \Heinsnull.
\end{equation} Thus, $\varphi\colon[\I]\to \Heinsnull$, $\varphi(t):=\Phi(l(t),d(t)) \text{ f.a.a. } t\in[\I],$ satisfies \eqref{mod:linmodbegin} for every fixed $l,d\in\Lzweizwei$.  For Bochner-measurable $l$ and $d$ it is Bochner-measurable as well since $\Phi$ is Lipschitz-continuous. The regularity $\varphi\in L^2(\I;\Hzwei\cap\Heinsnull)$ may be concluded for $l,d\in\Lzweizwei$ , if we square and integrate in time on both sides of \eqref{eq:abschvarphicont}.\\
Next, we reduce the ODE onto the variable $d$ via
\begin{equation} \label{eq:reducedODEcontinuous}
\partial_td(t)=-\frac{\beta}{\delta}(d(t)-\Phi(l(t),d(t))), \qquad d(0)=d_0.
\end{equation}
The  reduced right-hand side $f\colon[\I]\times\Lzwo\to\Lzwo, f(t,d)=-\frac{\beta}{\delta}(d-\Phi(l(t),d))$ is well-defined and Lipschitz continuous with respect to the second argument for almost all $t\in [\I]$. Moreover, the Nemytskii-operator associated to $f$ maps $\Lzweizwei$ to $\Lzweizwei$. Therefore, the application of Picard-Lindelöf's theorem in abstract function spaces (see \cite{EE04} or \cite{S17}, Lem 5.7) yields the existence of a unique solution $d\in X$. The continuity of $d$ in time is a standard result (see for example \cite{WM11}, Thm. 3.1.41) for the Bochner space $H^1(\I;\Lzwo)$. If we have an initial datum $d_0\in\Hzwei$ then Picard-Lindelöf's theorem also yields $d\in H^1(\I;\Hzwei)$.
\end{proof}
\begin{assumption} \label{ass:regularityvarphi} For the rest of this section, we assume that $\varphi$ possesses higher regularity in time, that is we assume $\varphi\in\Heinszwei$.  
\end{assumption}
We will comment on this assumption in the next section.
\subsection{Semidiscretization in time}
We will prove the existence of a unique solution of the semidiscrete state equation \eqref{eq:semidiscretestatelinear} as well as stability estimates for a slightly more general problem by adding an additional right-hand side $(f,\lambda)_{I\times\Omega}$ for a function $f\in\Lzweizwei$:
\begin{equation}
\label{eq:semidiscretestatelineargeneralized}
\B((\varphi_\tau,d_\tau),(\psi,\lambda))=(l,\psi)_{I\times\Omega}+(f,\lambda)_{I\times\Omega}+(d_0,\lambda^+_0) \qquad \forall (\psi,\lambda)\in \Vtau\times\Xtau.
\end{equation}
\begin{proposition} \label{prop: semidiscretestateexistence}
Let $l,f\in\Lzweizwei$ and $d_0\in\Lzwo$ be given. Then \eqref{eq:semidiscretestatelineargeneralized} possesses a unique solution $(\vt,\dt)\in\Vtau\times\Xtau$. Moreover, if $d_0\in\Hzwei$ and $f\in L^2(\I;\Hzwei)$, then we also have $\dt\in L^2(\I;\Hzwei)$. 
\end{proposition}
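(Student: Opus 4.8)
The plan is to exploit that every discrete function is piecewise constant in time, which turns \eqref{eq:semidiscretestatelineargeneralized} into a one–step time–marching scheme, and then to solve each step by a Lax–Milgram argument. Concretely, I would test separately with $(\psi,0)$ and with $(0,\lambda)$, choosing $\psi$ resp. $\lambda$ supported on a single subinterval $I_m$. Since $\dt$ is piecewise constant in time, $\partial_t\dt\equiv 0$ on each $I_m$ and the jump/initial contributions in $\B$ collapse to nodal evaluations. Writing $\overline{l}_m:=\tfrac1{\tau_m}\int_{I_m}l$, $\overline{f}_m:=\tfrac1{\tau_m}\int_{I_m}f$, $c_m:=\tfrac{\beta}{\delta}\tau_m$ and recalling the elliptic solution operator $\Phi$ from \eqref{eq:weakvarphicont}, one checks that \eqref{eq:semidiscretestatelineargeneralized} is equivalent to the scheme: for $m=1,\dots,M$,
\[
\varphi_{\tau,m}=\Phi(\overline{l}_m,d_{\tau,m}),\qquad (1+c_m)\,d_{\tau,m}-c_m\,\varphi_{\tau,m}=\tau_m\overline{f}_m+d_{\tau,m-1},
\]
with $d_{\tau,0}:=d_0$, where $\varphi_{\tau,m}$, $d_{\tau,m}$ denote the (constant in time) values on $I_m$.

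Inserting the first relation into the second and using affinity of $\Phi$, i.e. $\Phi(\overline{l}_m,\cdot)=\Phi(\overline{l}_m,0)+\Phi_0(\cdot)$ with $\Phi_0:=\Phi(0,\cdot)$ linear, the $m$-th step becomes $A_m d_{\tau,m}=\tau_m\overline{f}_m+d_{\tau,m-1}+c_m\Phi(\overline{l}_m,0)$ with $A_m:=(1+c_m)I-c_m\Phi_0$ acting on $\Lzwo$. Testing \eqref{eq:weakvarphicont} with $l=0$ by its own solution gives $\beta(\Phi_0 v,v)=\alpha\Vert\nabla\Phi_0 v\Vert^2+\beta\Vert\Phi_0 v\Vert^2$, hence $\Vert\Phi_0 v\Vert\le\Vert v\Vert$ and $0\le(\Phi_0 v,v)\le\Vert v\Vert^2$ for all $v\in\Lzwo$. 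Consequently $(A_m v,v)\ge(1+c_m)\Vert v\Vert^2-c_m\Vert v\Vert^2=\Vert v\Vert^2$, so every $A_m$ is bounded and coercive on $\Lzwo$, uniformly in $m$ and in $\tau$. Lax–Milgram then produces a unique $d_{\tau,m}$ at each step, and the recursion determines a unique pair $(\vt,\dt)\in\Vtau\times\Xtau$ solving \eqref{eq:semidiscretestatelineargeneralized}; note that $\varphi_{\tau,m}=\Phi(\overline{l}_m,d_{\tau,m})\in\Heinsnull$ indeed places $\vt$ in $\Vtau$.

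For the regularity assertion, assume $d_0\in\Hzwei$ and $f\in L^2(\I;\Hzwei)$; then $\overline{f}_m=\tfrac1{\tau_m}\int_{I_m}f\in\Hzwei$ for each $m$. I would argue by induction on $m$: assuming $d_{\tau,m-1}\in\Hzwei$, the elliptic regularity estimate \eqref{eq:abschvarphicont} yields $\Phi(\overline{l}_m,d_{\tau,m})\in\Hzwei\cap\Heinsnull$ (which only needs $\overline{l}_m,d_{\tau,m}\in\Lzwo$), so rewriting the step as $d_{\tau,m}=(1+c_m)^{-1}\bigl(\tau_m\overline{f}_m+d_{\tau,m-1}+c_m\Phi(\overline{l}_m,d_{\tau,m})\bigr)$ exhibits $d_{\tau,m}$ as a sum of $\Hzwei$-functions, hence $d_{\tau,m}\in\Hzwei$. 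Since $M<\infty$, this gives $\dt\in L^2(\I;\Hzwei)$.

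The only genuinely delicate point is the uniform coercivity of the step operators $A_m$, which rests on the two-sided bound $0\le(\Phi_0 v,v)\le\Vert v\Vert^2$ just derived; the remainder is bookkeeping with the jump terms and a finite induction. One may note that $A_m$ is in fact self-adjoint and positive definite on $\Lzwo$, since symmetry of \eqref{eq:weakvarphicont} gives $(\Phi_0 v,w)=(v,\Phi_0 w)$, so Riesz's theorem works as well as Lax–Milgram. An alternative would be to establish a global inf–sup condition for $\B$ on $\Vtau\times\Xtau$, but the decoupled marching formulation above is both shorter and directly reusable in the error analysis that follows.
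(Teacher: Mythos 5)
Your proposal is correct, and the reduction to a decoupled one-step marching scheme on each $I_m$ with $\varphi_{\tau,m}=\Phi(P_\tau l_{\vert I_m},d_{\tau,m})$ and $(1+\tfrac{\beta}{\delta}\tau_m)d_{\tau,m}-\tfrac{\beta}{\delta}\tau_m\varphi_{\tau,m}=d_{\tau,m-1}+\int_{I_m}f\,dt$ is exactly the one the paper uses; the regularity argument (read the step formula as a sum of $H^2$-functions, finite induction over $m$) is also identical. Where you genuinely diverge is in how you solve each step: the paper keeps the equation in fixed-point form, rewrites it as $d_{\tau,m}=F_m(d_{\tau,m})$ with $F_m$ having contraction constant $\tfrac{\beta\tau_m/\delta}{1+\beta\tau_m/\delta}<1$ (using only the Lipschitz bound $\Vert\Phi(l,p)-\Phi(l,q)\Vert\le\Vert p-q\Vert$), and invokes Banach's fixed-point theorem; you instead exploit linearity to write the step as $A_m d_{\tau,m}=\text{RHS}$ with $A_m=(1+c_m)I-c_m\Phi_0$ and prove coercivity $(A_m v,v)\ge\Vert v\Vert^2$ from the two-sided bound $0\le(\Phi_0v,v)\le\Vert v\Vert^2$, then apply Lax--Milgram (or Riesz, since $A_m$ is self-adjoint). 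Both are sound and rest on the same elliptic energy identity for $\Phi_0$; your version is arguably cleaner for this linear problem and hands you a uniform coercivity constant and positivity of $A_m$ for free, but the paper's contraction formulation is the one that survives when $\Phi$ is replaced by the nonlinear solution operator of the full damage model, which is the authors' stated next step, and it is also the form they reuse verbatim for the fully discrete equation \eqref{eq:discreteODEreduced}.
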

\begin{proof}
The solution $\vt\in\Vtau$ is given as $\vt(t)=\sum\limits_{m=1}^M \varphi_{\tau,m}\chi_{I_m}(t)$ with $\varphi_{\tau,m}:=\Phi(P_\tau l_{\vert_{I_m}},d_{\tau,m})$ while the existence of $d_{\tau,m}\in\Lzwo$ can be proven by applying Banach's fixed point theorem to the reduced fixed point equation in $\Lzwo$
\begin{equation} \label{eq:fixedpointeqoneintervallinear}
d_{\tau,m}=d_{\tau,m-1}-\frac{\beta}{\delta}\tau_m(d_{\tau,m}-\Phi(P_\tau l_{\vert_{I_m}},d_{\tau,m}))+\int\limits_{I_m} f(t)dt
\end{equation}
on each subinterval $I_m,m=1...,M,$ starting with $d_{\tau,0}=d_0$. We add $\frac{\beta}{\delta}\tau_m d_{\tau,m}$ on both sides to arrive at 
\begin{equation} \label{eq:fixedpointeqoneintervallinear}
d_{\tau,m}=F_m(d_{\tau,m}): = \frac{1}{1+\frac{\beta}{\delta}\tau_m} \left( d_{\tau,m-1}+\frac{\beta}{\delta}\tau_m\Phi(P_\tau l_{\vert_{I_m}},d_{\tau,m})+\int\limits_{I_m} f(t)dt\right).
\end{equation}
$F_m\colon\Lzwo\to\Lzwo$ is well-defined as $\Phi$ maps into $\Hzwei\cap\Heinsnull\subset\Lzwo$ and due to the definition of the Bochner integral. Moreover, $F_m$ is a contraction for all $m=1,...,M$ independent of the size of $\tau$ as we will see in the following: Let $p,q\in\Lzwo$ be given. We obtain 
\begin{align*}
\Vert F_m(p)- F_m(q)\Vert &\leq \frac{\frac{\beta}{\delta}\tau_m}{1+\frac{\beta}{\delta}\tau_m}\Vert \Phi(P_\tau l_{\vert_{I_m}},p)-\Phi(P_\tau l_{\vert_{I_m}},q)\Vert \\
&\leq \frac{\frac{\beta}{\delta}\tau_m}{1+\frac{\beta}{\delta}\tau_m}\Vert p-q\Vert.
\end{align*}
Note that the last inequality follows from the Lipschitz continuity of $\Phi\colon\Lzwo\times\Lzwo\to\Lzwo$ with respect to $d$, i.e. it holds 
\[\Vert \Phi(l,p)-\Phi(l,q)\Vert\leq \Vert p-q\Vert\]
with Lipschitz constant $1$.
Thus, $F_m$ is a contraction and Banach's fixed point theorem yields a unique solution. The uniqueness of $\dt$ on each subinterval then gives the uniqueness of $\vt$ on each subinterval and consequently the uniqueness of both functions on the whole time horizon. The regularity of $d_{\tau,m}$ solely relies on the regularity of $d_0$, that is, if we assume that $d_0\in\Hzwei$ and $f\in L^2(\I;\Hzwei)$ then $$d_{\tau,m}=\frac{1}{1+\frac{\beta}{\delta}\tau_m}\left(d_{\tau,m-1}+\frac{\beta}{\delta}\tau_m\Phi(P_\tau l_{\vert_{I_m}},d_{\tau,m})+\int\limits_{I_m} f(t)dt\right)\in\Hzwei$$ as well for all $m=1,...,M$ as $\Phi$ maps $\Lzwo\times\Lzwo\to\Hzwei$ and due to the definition of the Bochner integral. 
\end{proof}  
\begin{remark} Due to the dense embedding $X\overset{\text{d}}{\hookrightarrow} \Lzweizwei$ and $d\in C(\I;\Lzwo)$ the continuous solution $(\varphi,d)\in V\times X$ also satisfies the semidiscrete state equation. Therefore, we have the property of Galerkin orthogonality
\begin{equation} \label{eq:galerkinorthotime}
\B((\varphi-\varphi_\tau,d-d_\tau),(\psi,\lambda))=0 \quad \forall (\psi,\lambda)\in \Vtau\times\Xtau.
\end{equation}
\end{remark}
The first step in proving a priori error estimates is the derivation of stability estimates for the solution of the semidiscrete state equation \eqref{eq:semidiscretestatelinear}. We will establish them in several steps and start with an estimate for $\Vert\dt\Vert_{\infty,2}$:
\begin{lemma} \label{lem:stabestimateinftyzwo}
For the solution $(\vt,\dt)\in\Vtau\times\Xtau$ of the semidiscrete state equation \eqref{eq:semidiscretestatelineargeneralized} with right-hand sides $l,f\in\Leinszwei$ and initial state $d_0\in\Lzwo$ the stability estimate
\begin{equation}
\label{eq:stabestimateinftytwo}
\Vert\dt\Vert_{\infty,2}\leq C\{\Vert d_0\Vert+\Vert l\Vert_{\Leinszwei}+\Vert f\Vert_{\Leinszwei}\}
\end{equation}
holds true with a constant $C>0$ independent of $\tau$.
\end{lemma}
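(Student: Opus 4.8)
The plan is to test the semidiscrete equation \eqref{eq:semidiscretestatelineargeneralized} with a suitable piecewise constant test function built from $\dt$ itself, and then run a discrete Gronwall argument over the subintervals $I_m$. Concretely, recall that on each $I_m$ the ODE-part of the scheme reduces (as in the proof of Proposition~\ref{prop: semidiscretestateexistence}) to the recursion
\[
d_{\tau,m}-d_{\tau,m-1}+\tfrac{\beta}{\delta}\tau_m\bigl(d_{\tau,m}-\varphi_{\tau,m}\bigr)=\int_{I_m}f(t)\,dt,\qquad \varphi_{\tau,m}=\Phi(P_\tau l_{\vert_{I_m}},d_{\tau,m}).
\]
First I would take the $\Lzwo$-inner product of this identity with $d_{\tau,m}$. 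The term $(d_{\tau,m}-d_{\tau,m-1},d_{\tau,m})$ is bounded below by $\tfrac12\bigl(\Vert d_{\tau,m}\Vert^2-\Vert d_{\tau,m-1}\Vert^2\bigr)$ by the elementary identity $a(a-b)\ge\tfrac12(a^2-b^2)$.

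Next I would handle the coupling term $\tfrac{\beta}{\delta}\tau_m(d_{\tau,m}-\varphi_{\tau,m},d_{\tau,m})$. The key point is that this term has a sign: testing the elliptic equation \eqref{eq:weakvarphicont} for $\varphi_{\tau,m}$ with $\psi=d_{\tau,m}-\varphi_{\tau,m}\in\Heinsnull$ is not quite right since $d_{\tau,m}$ need not lie in $\Heinsnull$; instead I would use the Lipschitz bound $\Vert\varphi_{\tau,m}\Vert\le\Vert\Phi(P_\tau l_{\vert_{I_m}},d_{\tau,m})-\Phi(P_\tau l_{\vert_{I_m}},0)\Vert+\Vert\Phi(P_\tau l_{\vert_{I_m}},0)\Vert\le\Vert d_{\tau,m}\Vert+C\Vert P_\tau l_{\vert_{I_m}}\Vert$ coming from the Lipschitz-continuity of $\Phi$ with constant $1$ in $d$ and its linearity, hence $(d_{\tau,m}-\varphi_{\tau,m},d_{\tau,m})\ge -C\Vert P_\tau l_{\vert_{I_m}}\Vert\,\Vert d_{\tau,m}\Vert$ (the $\Vert d_{\tau,m}\Vert^2$ contributions cancel favourably). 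Combining, one gets
\[
\tfrac12\Vert d_{\tau,m}\Vert^2\le\tfrac12\Vert d_{\tau,m-1}\Vert^2+C\tau_m\Vert P_\tau l_{\vert_{I_m}}\Vert\,\Vert d_{\tau,m}\Vert+\Bigl\Vert\int_{I_m}f\Bigr\Vert\,\Vert d_{\tau,m}\Vert,
\]
and using $\tau_m\Vert P_\tau l_{\vert_{I_m}}\Vert\le\Vert l\Vert_{L^1(I_m;\Lzwo)}$ together with $\Vert\int_{I_m}f\Vert\le\Vert f\Vert_{L^1(I_m;\Lzwo)}$.

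From here I would dividing through by $\Vert d_{\tau,m}\Vert$ (treating the trivial case $d_{\tau,m}=0$ separately) to obtain $\Vert d_{\tau,m}\Vert\le\Vert d_{\tau,m-1}\Vert+C\Vert l\Vert_{L^1(I_m;\Lzwo)}+\Vert f\Vert_{L^1(I_m;\Lzwo)}$, and then sum over $m=1,\dots,k$ for arbitrary $k$, which telescopes to $\Vert d_{\tau,k}\Vert\le\Vert d_0\Vert+C\Vert l\Vert_{\Leinszwei}+\Vert f\Vert_{\Leinszwei}$. Taking the maximum over $k$ and noting that $\dt$ is piecewise constant gives $\Vert\dt\Vert_{\infty,2}\le C\{\Vert d_0\Vert+\Vert l\Vert_{\Leinszwei}+\Vert f\Vert_{\Leinszwei}\}$ with $C$ independent of $\tau$. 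I expect the main (minor) obstacle to be the bookkeeping around the coupling term: one must argue carefully that $d_{\tau,m}\notin\Heinsnull$ causes no trouble, relying only on the $\Lzwo\to\Lzwo$ Lipschitz estimate for $\Phi$ rather than on the elliptic energy estimate; once that is in place the rest is a standard discrete Gronwall (in fact telescoping) argument, and the absence of any restriction coupling $\tau$ and the data is automatic since no inverse estimates are used.
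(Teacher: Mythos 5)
Your proposal is correct and follows essentially the same route as the paper: reduce to the recursion on each $I_m$, test with $d_{\tau,m}$, bound $\Vert\varphi_{\tau,m}\Vert\le\Vert d_{\tau,m}\Vert+C\Vert P_\tau l_{\vert_{I_m}}\Vert$ so that the quadratic terms cancel, and telescope. The only cosmetic differences are that you use $a(a-b)\ge\tfrac12(a^2-b^2)$ where the paper uses Cauchy--Schwarz on $(d_{\tau,m-1},d_{\tau,m})$, and you obtain the $\varphi_{\tau,m}$-bound from the Lipschitz continuity of $\Phi$ rather than by testing the elliptic equation with $\psi=\varphi_{\tau,m}$; both yield the same estimate.
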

\begin{proof}
At first, by choosing the test functions to vanish outside of the subinterval $I_m$, we reduce the semidiscrete problem onto one subinterval
\begin{align} \label{eq:semidiscretestateoneinterval}
\alpha(\nabla \varphi_\tau,\nabla \psi)_{I_m\times\Omega}-\beta (d_\tau-\varphi_\tau,\psi)_{I_m\times\Omega} 
+&\frac{\beta}{\delta}(d_\tau-\varphi_\tau,\lambda)_{I_m\times\Omega}+ (d_{\tau,m},\lambda_m) \\
&=(l,\psi)_{I_m\times\Omega}+(d_{\tau,m-1},\lambda_m)+(f,\lambda)_{I_m\times\Omega} \nonumber
\end{align} 
for $m=1,...,M$ starting with $d_{\tau,0}:=d_0$. Next, by choosing $\psi=0$ and $\lambda_m=d_{\tau,m}$ we arrive at
\[\frac{\beta}{\delta}\tau_m\Vert d_{\tau,m}\Vert^2+\Vert d_{\tau,m}\Vert^2 \leq \Vert d_{\tau,m-1}\Vert\Vert d_{\tau,m}\Vert+\frac{\beta}{\delta}\tau_m\Vert\varphi_{\tau,m}\Vert\Vert d_{\tau,m}\Vert+\Vert\int\limits_{I_m} f(t)dt\Vert\Vert d_{\tau,m}\Vert.\]
An estimate for $\Vert\varphi_{\tau,m}\Vert$ can be established by choosing $\psi=\varphi_{\tau,m}$ and $\lambda=0$. This gives us
\begin{align*}
\beta\tau_m\Vert\varphi_{\tau,m}\Vert^2&\leq (l,\varphi_{\tau,m})_{I_m\times\Omega}+\beta\tau_m (d_{\tau,m},\varphi_{\tau,m}) \\
&\leq \Vert\int\limits_{I_m} l(t)dt\Vert\Vert\varphi_{\tau,m}\Vert + \beta\tau_m\Vert d_{\tau,m}\Vert\Vert\varphi_{\tau,m}\Vert.
\end{align*}
Division by $\beta\tau_m\Vert\varphi_{\tau,m}\Vert$ then leads to
\begin{equation} \label{eq:zwischenergvarphitm}
\Vert\varphi_{\tau,m}\Vert\leq \frac{1}{\beta}\Vert P_\tau l_{\vert I_m}\Vert+\Vert d_{\tau,m}\Vert.
\end{equation}
Now, the estimate for $\Vert d_{\tau,m}\Vert$ may be continued by
\begin{align*}
(1+\frac{\beta}{\delta}\tau_m) \Vert d_{\tau,m}\Vert & \leq \Vert d_{\tau,m-1}\Vert + \frac{\beta}{\delta}\tau_m\left(\frac{1}{\beta}\Vert P_\tau l_{\vert I_m}\Vert+\Vert d_{\tau,m}\Vert\right)+\Vert\int\limits_{I_m} f(t)dt\Vert \\
\Rightarrow \Vert d_{\tau,m}\Vert&\leq \Vert d_{\tau,m-1}\Vert + \frac{1}{\delta} \Vert l\Vert_{L^1(I_m;\Lzwo)}+\Vert f\Vert_{L^1(I_m;\Lzwo)}.
\end{align*} 
Induction then leads to
\begin{align*}
\Vert d_{\tau,m}\Vert&\leq \Vert d_0\Vert +\frac{1}{\delta}\Vert l\Vert_{L^1((0,t_m];\Lzwo)}+\Vert f\Vert_{L^1((0,t_m];\Lzwo)}\leq \Vert d_0\Vert +\frac{1}{\delta}\Vert l\Vert_{L^1(I;\Lzwo)}+\Vert f\Vert_{L^1(I;\Lzwo)}
\end{align*}
for all $m=1,...,M$. Thus, we arrive at 
\[\Vert \dt\Vert_{\infty,2}\leq C\{\Vert d_0\Vert +\Vert l\Vert_{L^1(I;\Lzwo)}+\Vert f\Vert_{L^1(I;\Lzwo)}\}. \]
\end{proof}
As the embeddings $L^\infty(\I;\Lzwo)\hookrightarrow\Lzweizwei\hookrightarrow\Leinszwei$ are continuous we have the following 
\begin{corollary} \label{col:stabestimatedtnonlinear}
For the solution $(\vt,\dt)\in\Vtau\times\Xtau$ of the semidiscrete state equation \eqref{eq:semidiscretestatelineargeneralized} with right-hand sides $l,f\in\Lzweizwei$ and initial state $d_0\in\Lzwo$ the stability estimate
\begin{equation}
\label{e	q:stabestimatedtwotwo}
\Vert\dt\Vert_{I\times\Omega}\leq C\{\Vert d_0\Vert+\Vert l\Vert_{I\times\Omega}+\Vert f\Vert_{I\times\Omega}\}
\end{equation}
holds true with a constant $C>0$ independent of $\tau$. 
\end{corollary}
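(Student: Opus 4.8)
The plan is to read off the claim directly from Lemma \ref{lem:stabestimateinftyzwo} together with the continuous embeddings between Bochner spaces on the bounded interval $I=(\I)$. First I would note that for a bounded time interval of length $T$, the Cauchy--Schwarz inequality in time gives $\Vert g\Vert_{\Leinszwei}\leq T^{1/2}\Vert g\Vert_{\Lzweizwei}$ for any $g\in\Lzweizwei$; in particular, if $l,f\in\Lzweizwei$ then $l,f\in\Leinszwei$ and their $L^1(0,T;\Lzwo)$-norms are controlled by their $\Lzweizwei$-norms. Hence the hypotheses of Lemma \ref{lem:stabestimateinftyzwo} are satisfied, and applying that lemma yields
\begin{equation*}
\Vert\dt\Vert_{\infty,2}\leq C\{\Vert d_0\Vert+\Vert l\Vert_{\Leinszwei}+\Vert f\Vert_{\Leinszwei}\}\leq C'\{\Vert d_0\Vert+\Vert l\Vert_{I\times\Omega}+\Vert f\Vert_{I\times\Omega}\}
\end{equation*}
with $C'$ still independent of $\tau$ (it depends only on $T$ and the problem data).

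Next I would pass from the $L^\infty(\I;\Lzwo)$-bound to the $\Lzweizwei$-bound on $\dt$ using the other continuous embedding indicated in the excerpt, namely $L^\infty(\I;\Lzwo)\hookrightarrow\Lzweizwei$, which again on the bounded interval $I$ costs only a factor $T^{1/2}$: $\Vert\dt\Vert_{I\times\Omega}\leq T^{1/2}\Vert\dt\Vert_{\infty,2}$. Chaining this with the previous display gives exactly \eqref{e	q:stabestimatedtwotwo} with a constant $C>0$ independent of $\tau$, as claimed.

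There is essentially no obstacle here: the corollary is a soft consequence of Lemma \ref{lem:stabestimateinftyzwo} and the elementary norm inequalities on a finite-length interval, and the only thing to be careful about is that all the embedding constants that appear ($T^{1/2}$ factors) are independent of the temporal mesh parameter $\tau$, which is clear since they depend only on $T$. I would state the argument in the two or three lines sketched above and leave the routine constant-tracking to the reader.
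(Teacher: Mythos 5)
Your argument is correct and is exactly the one the paper intends: the corollary is derived from Lemma \ref{lem:stabestimateinftyzwo} via the continuous embeddings $L^\infty(\I;\Lzwo)\hookrightarrow\Lzweizwei\hookrightarrow\Leinszwei$ on the bounded interval $I$, with embedding constants depending only on $T$ and hence independent of $\tau$. No further comment is needed.
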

We are now able to prove the main theorem regarding stability estimates:
\begin{theorem} \label{thm:stabeststatetimethm}
For the solution $(\varphi_\tau,d_\tau)\in\Vtau\times\Xtau$ of the dG(0) semidiscretized state equation \eqref{eq:semidiscretestatelineargeneralized} with right-hand sides $l,f\in\Lzweizwei$ and initial state $d_0\in \Lzwo$ the stability estimate
\begin{equation} \label{eq:stabeststatetime}
\Vert\Delta \varphi_\tau\Vert^2_{I\times\Omega}+\Vert\nabla\varphi_\tau\Vert^2_{I\times\Omega}+\Vert\varphi_\tau\Vert^2_{I\times\Omega}+\Vert d_\tau\Vert^2_{I\times\Omega}+\sum\limits_{m=1}^M\tau_m^{-1}\Vert[d_\tau]_{m-1}\Vert^2\leq C\{\Vert l\Vert^2_{I\times\Omega}+\Vert f\Vert^2_{I\times\Omega}+\Vert d_0\Vert^2\}
\end{equation}
holds true with a constant $C>0$ independent of $\tau$. The jump term $[d_\tau]_0$ is defined as $d_{\tau,0}^+-d_0$.
\end{theorem}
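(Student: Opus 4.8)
The plan is to reduce the estimate \eqref{eq:stabeststatetime} to the quantities already controlled in Corollary \ref{col:stabestimatedtnonlinear}. We already know $\Vert d_\tau\Vert_{I\times\Omega}$ is bounded by the right-hand side, so it remains to control $\Vert\Delta\varphi_\tau\Vert_{I\times\Omega}$, $\Vert\nabla\varphi_\tau\Vert_{I\times\Omega}$, $\Vert\varphi_\tau\Vert_{I\times\Omega}$ and the jump sum $\sum_m \tau_m^{-1}\Vert[d_\tau]_{m-1}\Vert^2$. For the $\varphi_\tau$-terms, I would recall that on each subinterval $\varphi_{\tau,m}=\Phi(P_\tau l_{\vert I_m},d_{\tau,m})$ solves the elliptic problem \eqref{eq:weakvarphicont} with data $\beta d_{\tau,m}+P_\tau l_{\vert I_m}$, so the elliptic regularity estimate \eqref{eq:abschvarphicont} gives
\[
\Vert\varphi_{\tau,m}\Vert_{\Hzwei}\leq C\bigl(\Vert d_{\tau,m}\Vert+\Vert P_\tau l_{\vert I_m}\Vert\bigr).
\]
Squaring, multiplying by $\tau_m$, summing over $m$, and using that $\Vert P_\tau l\Vert_{I\times\Omega}\leq\Vert l\Vert_{I\times\Omega}$ together with the already-established bound on $\Vert d_\tau\Vert_{I\times\Omega}$ yields the first three terms. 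Since $\Vert\Delta\varphi_\tau\Vert$, $\Vert\nabla\varphi_\tau\Vert$ and $\Vert\varphi_\tau\Vert$ are all dominated by $\Vert\varphi_\tau\Vert_{\Hzwei}$ (using $H^2\cap H^1_0$ regularity on the convex domain and Poincaré), this settles everything except the jump term.

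For the jump term I would go back to the subinterval formulation \eqref{eq:semidiscretestateoneinterval} and test with $\psi=0$, $\lambda_m=[d_\tau]_{m-1}=d_{\tau,m}-d_{\tau,m-1}$. Since $d_\tau$ is piecewise constant, $(\partial_t d_\tau,\lambda)_{I_m\times\Omega}=0$ on the interior, and the relevant terms reduce to
\[
\Vert[d_\tau]_{m-1}\Vert^2 + \tfrac{\beta}{\delta}\tau_m\bigl(d_{\tau,m}-\varphi_{\tau,m},[d_\tau]_{m-1}\bigr) = \Bigl(\textstyle\int_{I_m}f(t)\,dt,[d_\tau]_{m-1}\Bigr).
\]
Estimating the right-hand side by $\Vert f\Vert_{L^1(I_m;\Lzwo)}\Vert[d_\tau]_{m-1}\Vert$ and the cross term by $\tfrac{\beta}{\delta}\tau_m(\Vert d_{\tau,m}\Vert+\Vert\varphi_{\tau,m}\Vert)\Vert[d_\tau]_{m-1}\Vert$, dividing by $\Vert[d_\tau]_{m-1}\Vert$ and squaring gives $\Vert[d_\tau]_{m-1}\Vert^2\leq C(\tau_m^2(\Vert d_{\tau,m}\Vert^2+\Vert\varphi_{\tau,m}\Vert^2)+\Vert f\Vert_{L^1(I_m;\Lzwo)}^2)$. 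Dividing by $\tau_m$, summing, bounding $\tau_m\leq T$ and using Cauchy--Schwarz to pass from the $L^1(I_m)$-norms to $L^2(I_m)$-norms (picking up a factor $\tau_m$) then produces $\sum_m\tau_m^{-1}\Vert[d_\tau]_{m-1}\Vert^2\leq C(\Vert d_\tau\Vert_{I\times\Omega}^2+\Vert\varphi_\tau\Vert_{I\times\Omega}^2+\Vert f\Vert_{I\times\Omega}^2)$, and the first two terms on the right are already controlled. Finally I would combine the two parts and absorb all constants.

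The main obstacle is the jump term: unlike the $\varphi_\tau$-estimates, which follow almost directly from elliptic regularity, here one must be careful that testing with the jump on a single subinterval really does isolate $\Vert[d_\tau]_{m-1}\Vert^2$ with a favorable sign, and that the factor $\tau_m^{-1}$ in the sum is correctly matched — the $\tau_m^2$ from squaring the $\tau_m$-weighted cross term must dominate the $\tau_m^{-1}$ weight, which it does precisely because $\tau_m\le T$, and the $L^1$-to-$L^2$ conversion of the $f$-term supplies the remaining factor of $\tau_m$. A secondary point is handling the boundary jump $[d_\tau]_0=d_{\tau,0}^+-d_0$ consistently with $d_{\tau,0}:=d_0$, so that this term is in fact $\Vert d_{\tau,1}-d_0\Vert$ and fits the same estimate with the initial-data contribution $\Vert d_0\Vert$ appearing through the bound on $\Vert d_\tau\Vert_{I\times\Omega}$.
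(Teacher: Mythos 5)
Your proposal is correct and follows essentially the same route as the paper: there, too, the $\varphi_\tau$-bounds come from the per-interval elliptic problem (the paper tests with $\psi=\varphi_\tau$ and $\psi=-\Delta\varphi_\tau$, which is just the test-function form of the elliptic regularity estimate \eqref{eq:abschvarphicont} you invoke), the jump bound comes from testing with $\lambda=[d_\tau]_{m-1}$, and the $d_\tau$-bound is imported from Corollary \ref{col:stabestimatedtnonlinear}. One cosmetic remark: the bound $\tau_m\le T$ is not actually needed in the jump estimate, since $\tau_m\Vert d_{\tau,m}\Vert^2=\Vert d_\tau\Vert^2_{I_m\times\Omega}$ exactly, so the sum over $m$ reproduces the $L^2(I\times\Omega)$-norms directly.
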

\begin{proof}
The theorem can be proven along the lines of \cite{MV08}, Thm. 4.1/4.3, to derive stability estimates. The estimate for $\Vert\vt\Vert_{I\times\Omega}$ follows by choosing $\psi=\vt$ and $\lambda=0$ in \eqref{eq:semidiscretestateoneinterval} and summing up over all $m=1,...,M$ together with the just derived estimate for $\Vert\dt\Vert^2_{I\times\Omega}$. The estimate for $\Delta\vt$ and $\nabla\vt$ follows after partial integration in space with the choice $\psi=-\Delta\vt$ and $\lambda=0$. Last, the estimate for the jump terms may be concluded by choosing $\psi=0$ and $\lambda=[\dt]_{m-1}$. 
\end{proof}
The above result is also applicable to dual equations. For given right-hand sides $g_1,g_2\in\Lzweizwei$ and a given terminal state $p_T\in\Lzwo$ the corresponding dual equation is given as
\begin{equation} \label{eq:semidiscretedualeq}
\B((\psi,\lambda),(\zt,\pt))= (g_1,\psi)_{I\times\Omega}+(g_2,\lambda)_{I\times\Omega}+(p_T,\lambda^-_M) \quad \forall (\psi,\lambda)\in \Vtau\times\Xtau
\end{equation}
Note, that the semidiscrete bilinear form $\B$ can equivalently be expressed as
\begin{align}  \label{eq:semidiscreteBdual}
\B((\psi,\lambda),(\zt,\pt))&=\alpha(\nabla \zt,\nabla \psi)_{I\times\Omega}+ (\beta\zt-\frac{\beta}{\delta}\pt,\psi-\lambda)_{I\times\Omega} \\
&-\sum\limits_{m=1}^M (\partial_t \pt,\lambda)_{I_m\times\Omega}-\sum\limits_{m=1}^{M-1}([\pt]_m,\lambda^-_m)+(p_{\tau,M}^-,\lambda^-_M) \nonumber
\end{align}
for $(\zt,\pt),(\psi,\lambda)\in\Vtau\times\Xtau$.
\begin{corollary} \label{cor:stabestdualtime}The semidiscrete dual equation \eqref{eq:semidiscretedualeq} possesses a unique solution $(\zt,\pt)\in\Vtau\times\Xtau$ for all $g_1,g_2\in\Lzweizwei$ and $p_T\in\Lzwo$. Moreover, the stability estimate
\begin{equation} \label{eq:stabestdualtime}
\Vert \Delta \zt\Vert^2_{I\times\Omega}+\Vert\nabla \zt\Vert^2_{I\times\Omega}+\Vert \zt\Vert^2_{I\times\Omega}+ \Vert \pt\Vert^2_{I\times\Omega}+\sum\limits_{m=1}^M \tau_m^{-1}\Vert [\pt]_m\Vert^2 \leq C\{\Vert g_1\Vert^2_{I\times\Omega}+\Vert g_2\Vert^2_{I\times\Omega}+\Vert p_T\Vert^2\}
\end{equation}
holds true with a constant $C>0$ independent of $\tau$. The jump term $[p_\tau]_M$ is defined as $p_T-p_{\tau,M}^-$.
\end{corollary}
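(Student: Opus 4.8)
The plan is to reduce Corollary \ref{cor:stabestdualtime} to the already-established Theorem \ref{thm:stabeststatetimethm} by recognizing the dual equation \eqref{eq:semidiscretedualeq} as a backward-in-time instance of the semidiscrete primal problem. First I would perform the time reversal: given the dual solution $(\zt,\pt)$, set $\tilde z_\tau(t):=\zt(T-t)$ and $\tilde p_\tau(t):=\pt(T-t)$ on the reflected partition with subintervals $\tilde I_m := (T-t_{M-m+1}, T-t_{M-m}]$. Under this substitution the backward jump terms $[\pt]_m$ at the interior nodes become forward jump terms of $\tilde p_\tau$, the term $-\sum_m(\partial_t\pt,\lambda)_{I_m\times\Omega}$ turns into $+\sum_m(\partial_t\tilde p_\tau,\tilde\lambda)_{\tilde I_m\times\Omega}$, and the terminal contribution $(p_{\tau,M}^-,\lambda_M^-)$ becomes an initial contribution carrying the datum $p_T$. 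Inspecting \eqref{eq:semidiscreteBdual} against the definition of $\B$, one checks that $(\tilde z_\tau, \tilde p_\tau)$ solves a problem of exactly the form \eqref{eq:semidiscretestatelineargeneralized} with right-hand sides $\tilde l(t) = g_1(T-t)$, $\tilde f(t) = g_2(T-t)$, and initial state $d_0 = p_T$ — the only subtlety being the sign/placement of the coupling term $(\beta\zt - \tfrac{\beta}{\delta}\pt, \psi-\lambda)_{I\times\Omega}$, which matches the primal coupling $-\beta(d_\tau-\varphi_\tau,\psi) + \tfrac{\beta}{\delta}(d_\tau-\varphi_\tau,\lambda)$ once one identifies $\zt\leftrightarrow\varphi_\tau$ and $\pt\leftrightarrow d_\tau$.

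Having made this identification, existence and uniqueness of $(\zt,\pt)$ follows immediately from Proposition \ref{prop: semidiscretestateexistence} applied to the reflected data (which indeed lie in $\Lzweizwei$ and $\Lzwo$), and the stability estimate \eqref{eq:stabestdualtime} follows from \eqref{eq:stabeststatetime}, since all the norms appearing are invariant under the time reflection $t\mapsto T-t$: $\Vert\tilde z_\tau\Vert_{I\times\Omega} = \Vert\zt\Vert_{I\times\Omega}$, the reflected interior-jump sum equals $\sum_{m=1}^{M}\tau_m^{-1}\Vert[\pt]_m\Vert^2$ with the convention $[\pt]_M = p_T - p_{\tau,M}^-$ mirroring $[d_\tau]_0 = d_{\tau,0}^+ - d_0$, and likewise for the $\Delta$, $\nabla$, and $L^2$ norms of $\zt$ and the $L^2$ norm of $\pt$. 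Alternatively — and this is perhaps cleaner to write — one can bypass the explicit change of variables and simply repeat the proofs of Lemma \ref{lem:stabestimateinftyzwo}, Corollary \ref{col:stabestimatedtnonlinear}, and Theorem \ref{thm:stabeststatetimethm} verbatim using the representation \eqref{eq:semidiscreteBdual}, testing with $\lambda = \pt$ (resp. $\lambda = p_{\tau,m}^-$) on each subinterval and summing backward from $m=M$ to $m=1$; the telescoping structure of the jump terms works identically in reverse.

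The main obstacle is purely bookkeeping: one must be careful that the discrete time-derivative and jump terms in \eqref{eq:semidiscreteBdual} genuinely reassemble into the primal form after reflection, including the boundary node. Specifically, in \eqref{eq:semidiscreteBdual} the "initial" slot at $t=0$ is absent and the "terminal" slot at $t=T$ carries $p_{\tau,M}^-$, which is the reverse of $\B$ where the datum sits at $t=0$; the reflection swaps these correctly, but the resulting jump term at the new initial node must be interpreted with the stated convention $[\pt]_M := p_T - p_{\tau,M}^-$, exactly paralleling the convention $[d_\tau]_0 := d_{\tau,0}^+ - d_0$ in Theorem \ref{thm:stabeststatetimethm}. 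Once this correspondence is set up carefully, no new estimates are needed and the proof is a one-line appeal to the already-proven results; accordingly I would keep the write-up short, stating the reflection, noting the structural identity with \eqref{eq:semidiscretestatelineargeneralized}, and invoking Proposition \ref{prop: semidiscretestateexistence} and Theorem \ref{thm:stabeststatetimethm}.
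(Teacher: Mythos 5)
Your proposal is correct and matches the paper's (essentially unstated) argument: the paper simply observes via the representation \eqref{eq:semidiscreteBdual} that the dual problem has the same structure as the time-reversed primal problem, so Proposition \ref{prop: semidiscretestateexistence} and Theorem \ref{thm:stabeststatetimethm} carry over. One small imprecision: expanding $(\beta\zt-\tfrac{\beta}{\delta}\pt,\psi-\lambda)$ shows the coupling constants $\beta$ and $\tfrac{\beta}{\delta}$ swap roles relative to the primal form (the elliptic equation for $\zt$ sees $\tfrac{\beta}{\delta}\pt$, the ODE for $\pt$ sees $\beta\zt$), so the reflected problem is not \emph{literally} an instance of \eqref{eq:semidiscretestatelineargeneralized} but one with the same structure and different positive constants — harmless, and covered by your alternative of rerunning the proofs verbatim.
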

We now turn our attention to the a priori error estimates for the temporal discretization error. We split the temporal errors as
\[e_\tau^d=d-\dt=\underbrace{d-\pi_\tau d}_{= \eta_\tau^d}+\underbrace{\pi_\tau d-\dt}_{= \xi_\tau^d} \qquad e_\tau^\varphi=\varphi-\vt=\underbrace{\varphi-P_\tau \varphi}_{= \eta_\tau^\varphi}+\underbrace{P_\tau\varphi-\vt}_{= \xi_\tau^\varphi}. \]
The course of the proof of temporal a priori error estimates is similar to the steps taken in \cite{MV08}. We first prove the boundedness of the error by the interpolation and projection errors, respectively, and apply known error estimates for the interpolation and projection operators afterwards. 
\begin{lemma} \label{lem:errortimefirstlemma}
For the projection errors $\eta_\tau^\varphi,\eta_\tau^d$ the equality
\[\B((\etavt,\etadt),(\psi,\lambda))=-\beta(\etadt,\psi)_{I\times\Omega}+\frac{\beta}{\delta}(\etadt,\lambda)_{I\times\Omega}\]
\end{lemma}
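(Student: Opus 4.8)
The plan is to insert $(\etavt,\etadt)$ directly into the definition of the semidiscrete bilinear form $\B$ and to check, block by block, that everything collapses to the two contributions on the right-hand side; recall $\etavt=\varphi-P_\tau\varphi$, $\etadt=d-\pi_\tau d$, and the test functions $(\psi,\lambda)$ range over $\Vtau\times\Xtau$. First I would dispose of the elliptic term and the first coupling term. Since $\psi\in\Vtau$, relation \eqref{eq:nablaPtvarphi} gives simultaneously $\alpha(\nabla\etavt,\nabla\psi)_{I\times\Omega}=0$ and $\beta(\etavt,\psi)_{I\times\Omega}=0$, so the block $\alpha(\nabla\etavt,\nabla\psi)_{I\times\Omega}-\beta(\etadt-\etavt,\psi)_{I\times\Omega}$ reduces to $-\beta(\etadt,\psi)_{I\times\Omega}$, which is the first term on the right-hand side.

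The core of the argument is the temporal block
\[\sum_{m=1}^M(\partial_t\etadt,\lambda)_{I_m\times\Omega}+\sum_{m=2}^M([\etadt]_{m-1},\lambda_{m-1}^+)+((\etadt)_0^+,\lambda_0^+),\]
which vanishes. Because $\pi_\tau d$ is piecewise constant in time one has $\partial_t\etadt=\partial_t d$ on each $I_m$, hence $(\partial_t\etadt,\lambda)_{I_m\times\Omega}=(d(t_m)-d(t_{m-1}),\lambda_m)$, where $\lambda_m$ denotes the constant value of $\lambda$ on $I_m$ and we used $\lambda_{m-1}^+=\lambda_m$. Since $d\in C(\overline{I};\Lzwo)$ by Proposition~\ref{prop:solvweakformcont} and $(\pi_\tau d)(t_m)=d(t_m)$, one computes $(\etadt)^-_{m-1}=0$ and $(\etadt)^+_{m-1}=d(t_{m-1})-d(t_m)$, so the interior jump contribution at $t_{m-1}$ is $(d(t_{m-1})-d(t_m),\lambda_m)$ and the endpoint contribution at $t_0$ is $(d(0)-d(t_1),\lambda_1)$. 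Adding the $\partial_t$ contribution and the jump contribution subinterval by subinterval yields an exact telescoping cancellation, so the whole temporal block is zero.

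It remains to treat the second coupling term $\frac{\beta}{\delta}(\etadt-\etavt,\lambda)_{I\times\Omega}$. Its $\etavt$-part equals $-\frac{\beta}{\delta}(\varphi-P_\tau\varphi,\lambda)_{I\times\Omega}$, which vanishes because $\lambda$ is piecewise constant in time and $P_\tau$ reproduces the temporal mean on each $I_m$; note this uses only $\lambda\in\Xtau$, not $\lambda\in\Vtau$. Hence that block reduces to $\frac{\beta}{\delta}(\etadt,\lambda)_{I\times\Omega}$, the second term on the right-hand side, and collecting the surviving contributions proves the identity. I expect the only genuinely fiddly step to be the bookkeeping in the temporal block — keeping straight the conventions for $v^\pm_{\tau,m}$, the index shift $\lambda_{m-1}^+=\lambda_m$, and the endpoint term at $t_0$ — but once these are written out carefully the cancellation is immediate.
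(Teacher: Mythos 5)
Your proof is correct, and it rests on the same two facts as the paper's: the nodal interpolation property $(\pi_\tau d)(t_m)=d(t_m)$ kills everything temporal, and the $L^2(I)$-orthogonality of $P_\tau$ against piecewise-constant-in-time test functions kills every term containing $\etavt$. The only difference is in execution: the paper inserts the errors into the integrated-by-parts representation \eqref{eq:semidiscreteBdual}, where the time derivative and the jumps sit on the test function, so the temporal block vanishes term by term ($\partial_t\lambda=0$ on each $I_m$ and $\eta^{d,-}_{\tau,m}=0$ at every node), whereas you work in the primal form and cancel $\int_{I_m}\partial_t d\,dt=d(t_m)-d(t_{m-1})$ against the jump $[\etadt]_{m-1}=d(t_{m-1})-d(t_m)$ subinterval by subinterval. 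Both routes are equally rigorous; yours avoids invoking the dual representation at the cost of the bookkeeping you already identified as the fiddly part.
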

holds true for all $(\psi,\lambda)\in\Vtau\times\Xtau$.
\begin{proof}
Similar to \cite{MV08}, Lem. 5.2, we use representation \eqref{eq:semidiscreteBdual} for the semidiscrete bilinear form. Due to the definition of $\pi_\tau d$ we have $\eta_{\tau,m}^{d,-}=0$ for all $m=1,...,M$. Thus all these terms as well as the terms containing temporal derivatives vanish. The $\Lzweizwei$-scalar products containing $\etavt$ are zero for test functions $\psi\in\Vtau\subset\Xtau$ and $\lambda\in\Xtau$ due to the definition of $P_\tau$ (see \eqref{eq:nablaPtvarphi}). 
\end{proof}
\begin{lemma} \label{lem:timeerrorvseta}
The discretization error is bounded by the projection error, that is
\[\Vert \evt\Vert_{I\times\Omega}+ \Vert\edt\Vert_{I\times\Omega} \leq C\{\Vert \etavt\Vert_{I\times\Omega}+\Vert\etadt\Vert_{I\times\Omega}\}\]
holds true. 
\end{lemma}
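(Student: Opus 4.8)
The plan is to split the error as $\evt=\etavt+\xivt$ and $\edt=\etadt+\xidt$ and to derive an equation for the discrete parts $(\xivt,\xidt)\in\Vtau\times\Xtau$, then apply the stability estimate of Theorem~\ref{thm:stabeststatetimethm} to bound them. First I would invoke Galerkin orthogonality \eqref{eq:galerkinorthotime}, which gives $\B((\evt,\edt),(\psi,\lambda))=0$ for all $(\psi,\lambda)\in\Vtau\times\Xtau$. Combining this with Lemma~\ref{lem:errortimefirstlemma}, which computes $\B((\etavt,\etadt),(\psi,\lambda))=-\beta(\etadt,\psi)_{I\times\Omega}+\frac{\beta}{\delta}(\etadt,\lambda)_{I\times\Omega}$, and using bilinearity, I obtain
\[
\B((\xivt,\xidt),(\psi,\lambda))=\beta(\etadt,\psi)_{I\times\Omega}-\frac{\beta}{\delta}(\etadt,\lambda)_{I\times\Omega}
\qquad\forall(\psi,\lambda)\in\Vtau\times\Xtau.
\]
Crucially, the right-hand side is of the form $(l_{\mathrm{eff}},\psi)_{I\times\Omega}+(f_{\mathrm{eff}},\lambda)_{I\times\Omega}$ with $l_{\mathrm{eff}}=\beta\etadt$, $f_{\mathrm{eff}}=-\frac{\beta}{\delta}\etadt$, both in $\Lzweizwei$, and with zero initial datum (since $\pi_\tau d(0)=d(0)=d_0$, so $\xidt$ has vanishing initial jump $\xi^{d,+}_{\tau,0}-0$ — here I need to check that $\pi_\tau d - d_\tau$ indeed has the correct initial value, which follows from $\pi_\tau d(t_0^+)$ being interpreted appropriately, or rather the jump term $[\xidt]_0$ equals $-d_0 + d_0 = 0$).

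Then I would apply Theorem~\ref{thm:stabeststatetimethm} (with $l\rightsquigarrow\beta\etadt$, $f\rightsquigarrow-\frac{\beta}{\delta}\etadt$, $d_0\rightsquigarrow 0$) to conclude
\[
\Vert\xivt\Vert_{I\times\Omega}+\Vert\xidt\Vert_{I\times\Omega}\leq C\Vert\etadt\Vert_{I\times\Omega}.
\]
Finally the triangle inequality gives $\Vert\evt\Vert_{I\times\Omega}+\Vert\edt\Vert_{I\times\Omega}\leq \Vert\etavt\Vert_{I\times\Omega}+\Vert\etadt\Vert_{I\times\Omega}+\Vert\xivt\Vert_{I\times\Omega}+\Vert\xidt\Vert_{I\times\Omega}\leq C\{\Vert\etavt\Vert_{I\times\Omega}+\Vert\etadt\Vert_{I\times\Omega}\}$, which is the claim. (The stated estimate only needs $\Vert\etadt\Vert_{I\times\Omega}$ on the right; one can either absorb $\Vert\etavt\Vert_{I\times\Omega}$ trivially or simply note the weaker bound by $\Vert\etadt\Vert$ alone already suffices, then relax.)

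The main obstacle I anticipate is purely bookkeeping: making sure that the perturbation equation for $(\xivt,\xidt)$ really matches the \emph{generalized} semidiscrete state equation \eqref{eq:semidiscretestatelineargeneralized} to which Theorem~\ref{thm:stabeststatetimethm} applies, in particular that all jump contributions involving $\etadt$ vanish (they do, because $\pi_\tau d$ interpolates $d$ at the nodes $t_m$, so $\eta^{d,-}_{\tau,m}=0$) and that the initial condition carried by $\xidt$ is exactly zero. There is no analytical difficulty beyond citing the already-established stability theorem; the argument is a standard Galerkin-orthogonality-plus-stability bootstrap, and the only care needed is to track the sign and the coefficients $\beta,\beta/\delta$ correctly so that the right-hand-side data land in the right slots.
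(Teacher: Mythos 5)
Your argument is correct, but it runs along a different track than the paper's. The paper also starts from the splitting $e=\eta+\xi$, Galerkin orthogonality, and Lemma \ref{lem:errortimefirstlemma}, but it then introduces the semidiscrete \emph{dual} problem $\B((\psi,\lambda),(\zt,\pt))=(\evt,\psi)_{I\times\Omega}+(\edt,\lambda)_{I\times\Omega}$ with $p_T=0$, writes $\Vert\evt\Vert^2+\Vert\edt\Vert^2$ via this error representation, and closes the estimate with the dual stability bound of Corollary \ref{cor:stabestdualtime}. You instead observe that $(\xivt,\xidt)\in\Vtau\times\Xtau$ itself solves the generalized primal problem \eqref{eq:semidiscretestatelineargeneralized} with data $l=\beta\etadt$, $f=-\tfrac{\beta}{\delta}\etadt$ and vanishing initial datum (your bookkeeping here is right: the $(d_0,\lambda_0^+)$ terms cancel in the Galerkin orthogonality because $d(0)=d_{\tau,0}=d_0$, and Lemma \ref{lem:errortimefirstlemma} contributes no nodal terms since $\eta^{d,-}_{\tau,m}=0$), and then you invoke the primal stability estimate of Theorem \ref{thm:stabeststatetimethm}. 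By uniqueness (Proposition \ref{prop: semidiscretestateexistence}) that theorem indeed applies to $(\xivt,\xidt)$, so your route is fully rigorous and avoids introducing a dual problem altogether; it even yields the marginally sharper intermediate bound $\Vert\xivt\Vert_{I\times\Omega}+\Vert\xidt\Vert_{I\times\Omega}\leq C\Vert\etadt\Vert_{I\times\Omega}$ with no $\etavt$ contribution before the final triangle inequality. The trade-off is that the duality representation used in the paper is the template that is reused later for the adjoint/optimal-control error analysis (Lemma \ref{lem:errorestimateadjoint}) and generalizes more readily to functional-output or weaker-norm estimates, whereas your direct perturbation argument is the more elementary of the two for this particular $L^2$ bound.
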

\begin{proof}
The lemma can be proven  following the arguments used in \cite{MV08}, Lem. 5.3. We consider the dual equation 
\[\B((\psi,\lambda),(\zt,\pt))=(\evt,\psi)_{I\times\Omega}+(\edt,\lambda)_{I\times\Omega} \quad \forall (\psi,\lambda)\in\Vtau\times\Xtau.\]
If we set $p_T=0$ this equation has a unique solution $(\zt,\pt)\in\Vtau\times\Xtau$. Then the following representation holds due to the Galerkin orthogonality and Lemma \ref{lem:errortimefirstlemma}
\begin{align*}
(\evt,\evt)_{I\times\Omega}+(\edt,\edt)_{I\times\Omega}&= (\xivt,\evt)_{I\times\Omega}+(\etavt,\evt)_{I\times\Omega}+(\xidt,\edt)_{I\times\Omega}+(\etadt,\edt)_{I\times\Omega} \\
&=(\etavt,\evt)_{I\times\Omega}+(\etadt,\edt)_{I\times\Omega}+\B((\xivt,\xidt),(\zt,\pt)) \\
&=(\etavt,\evt)_{I\times\Omega}+(\etadt,\edt)_{I\times\Omega}-\B((\etavt,\etadt),(\zt,\pt)) \\
&=(\etavt,\evt)_{I\times\Omega}+(\etadt,\edt)_{I\times\Omega}+\beta(\etadt,\zt)_{I\times\Omega}-\frac{\beta}{\delta}(\etadt,\pt)_{I\times\Omega}.
\end{align*}
Cauchy-Schwarz's inequality and the stability estimates for dual solutions then lead to
\[\Vert\evt\Vert^2_{I\times\Omega}+\Vert\edt\Vert^2_{I\times\Omega}\leq C(\Vert\etavt\Vert_{I\times\Omega}+\Vert\etadt\Vert_{I\times\Omega})(\Vert\evt\Vert_{I\times\Omega}+\Vert\edt\Vert_{I\times\Omega}).\]

\end{proof}
This gives us the main result of this section. Due to Assumption \ref{ass:regularityvarphi} $\varphi$ has the required regularity. 
\begin{theorem} \label{thm:temporalerrorestimate}
Let $l\in\Lzweizwei, d_0\in\Lzwo$ and Assumption \ref{ass:regularityvarphi} be fulfilled. For the errors $\evt:=\varphi-\vt$ and $\edt:=d-\dt$ between the continuous solutions $(\varphi,d)\in V\times X$ of \eqref{eq:weakformcont} and the dG(0) semidiscretized solutions $(\vt,\dt)\in\Vtau\times\Xtau$ of \eqref{eq:semidiscretestatelinear}, we have the error estimate
\[\Vert \evt\Vert_{I\times\Omega}+\Vert \edt\Vert_{I\times\Omega}\leq C\tau\{\Vert \partial_t\varphi\Vert_{I\times\Omega}+\Vert\partial_t d\Vert_{I\times\Omega}\}\]
with a constant $C$ independent of the temporal discretization parameter $\tau$. 
\end{theorem}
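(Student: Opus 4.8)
The strategy is the standard two-step argument: first reduce the full discretization error to the projection errors via Lemma~\ref{lem:timeerrorvseta}, then estimate the projection errors $\etavt = \varphi - P_\tau\varphi$ and $\etadt = d - \pi_\tau d$ using classical interpolation/projection estimates in time. From Lemma~\ref{lem:timeerrorvseta} we already have
\[
\Vert\evt\Vert_{I\times\Omega} + \Vert\edt\Vert_{I\times\Omega} \leq C\bigl\{\Vert\etavt\Vert_{I\times\Omega} + \Vert\etadt\Vert_{I\times\Omega}\bigr\},
\]
so everything comes down to bounding the right-hand side by $C\tau\{\Vert\partial_t\varphi\Vert_{I\times\Omega} + \Vert\partial_t d\Vert_{I\times\Omega}\}$.

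For the term $\Vert\etadt\Vert_{I\times\Omega}$, I would invoke the standard estimate for the nodal interpolation operator $\pi_\tau$ onto piecewise constants in time: on each subinterval $I_m$, since $\pi_\tau d$ equals the value $d(t_m)$, one has $\Vert d - \pi_\tau d\Vert_{L^2(I_m;\Lzwo)} \leq \tau_m \Vert\partial_t d\Vert_{L^2(I_m;\Lzwo)}$, which follows from writing $d(t) - d(t_m) = -\int_t^{t_m}\partial_t d(s)\,ds$ and applying Cauchy--Schwarz together with $\tau_m \leq \tau$; this uses the regularity $d\in H^1(\I;\Lzwo)$ from Proposition~\ref{prop:solvweakformcont}. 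Squaring and summing over $m$ gives $\Vert\etadt\Vert_{I\times\Omega} \leq \tau\Vert\partial_t d\Vert_{I\times\Omega}$. For $\Vert\etavt\Vert_{I\times\Omega}$, the operator $P_\tau$ is the $L^2$-projection in time onto piecewise constants, for which the analogous first-order bound $\Vert\varphi - P_\tau\varphi\Vert_{L^2(I_m;\Lzwo)} \leq C\tau_m\Vert\partial_t\varphi\Vert_{L^2(I_m;\Lzwo)}$ holds whenever $\varphi\in H^1(I_m;\Lzwo)$; here Assumption~\ref{ass:regularityvarphi} ($\varphi\in\Heinszwei$) is precisely what guarantees the needed temporal regularity. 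Again squaring and summing yields $\Vert\etavt\Vert_{I\times\Omega} \leq C\tau\Vert\partial_t\varphi\Vert_{I\times\Omega}$.

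Combining the two projection-error bounds with Lemma~\ref{lem:timeerrorvseta} gives the claimed estimate. I do not anticipate a genuine obstacle here --- the analysis has been set up so that all the difficulty (the coupling of the PDE and ODE, the handling of the jump terms, the duality argument) is already absorbed into Lemma~\ref{lem:errortimefirstlemma} and Lemma~\ref{lem:timeerrorvseta}. The only point requiring minor care is to make sure the interpolation/projection estimates are applied on each subinterval with the local mesh size $\tau_m$ before passing to the global $\tau$, so that no quasi-uniformity of the temporal mesh is secretly used; since $\tau_m \leq \tau$ this is immediate. One should also note that the continuous solution does satisfy the semidiscrete equation (so the Galerkin orthogonality used in Lemma~\ref{lem:timeerrorvseta} is legitimate), which was already observed in the remark following Proposition~\ref{prop: semidiscretestateexistence}.
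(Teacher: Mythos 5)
Your proposal is correct and follows essentially the same route as the paper: reduce to the projection errors via Lemma~\ref{lem:timeerrorvseta}, then apply the standard first-order temporal interpolation estimate for $\etadt$ and the first-order bound for the $L^2$-projection (which the paper obtains via the best-approximation property against $\pi_\tau\varphi$, using Assumption~\ref{ass:regularityvarphi}) for $\etavt$. The only difference is cosmetic --- you cite the $L^2$-projection estimate directly rather than deriving it from the nodal interpolant --- so there is nothing further to add.
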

\begin{proof}
It suffices to bound the interpolation error $\Vert\etadt\Vert_{I\times\Omega}$ and the projection error $\Vert\etavt\Vert_{I\times\Omega}$. It is well known that
\[\Vert\etadt\Vert_{I_m\times\Omega}\leq C\tau_m\Vert\partial_t d\Vert_{I_m\times\Omega}\]
holds true for $d$. Due to Assumption \ref{ass:regularityvarphi} we know that $\varphi$ is continuous in time. Therefore, $\pi_\tau$ is applicable to $\varphi$, maps into $\Vtau$ and we have the same error estimate as above, namely
\[\Vert\varphi-\pi_\tau\varphi\Vert_{I_m\times\Omega}\leq C\tau_m\Vert\partial_t \varphi\Vert_{I_m\times\Omega}.\]
The estimate for $\etavt$ then follows due to the best approximation property of the $L^2$-projection. Now, 
\begin{align*}
\Vert \evt\Vert^2_{I\times\Omega}+\Vert \edt\Vert^2_{I\times\Omega}&\leq C\{\Vert \etavt\Vert_{I\times\Omega}^2+\Vert\etadt\Vert_{I\times\Omega}^2\} = C\sum\limits_{m=1}^M\{\Vert \etavt\Vert_{I_m\times\Omega}^2+\Vert\etadt\Vert_{I_m\times\Omega}^2\} \\
&\leq C\sum\limits_{m=1}^M \tau_m^2\{\Vert\partial_t d\Vert_{I_m\times\Omega}^2+\Vert\partial_t \varphi\Vert_{I_m\times\Omega}^2\} \leq C\tau^2\{\Vert\partial_t d\Vert_{I\times\Omega}^2+\Vert\partial_t \varphi\Vert_{I\times\Omega}^2\}
\end{align*}
finishes the proof.
\end{proof}
\subsection{Discretization in space}
In this paragraph we deal with the space-time discretized state equation which was given by \eqref{eq:fullydiscretestate}. As we will need them later, we briefly summarize results well known for the discretization in space of the PDE.
\begin{lemma} \label{lem:spatialerrorvh}
Let $l,d\in\Lzwo$ be given. Then the variational problem
\begin{equation}
\label{eq:spacediscretestatevarphi}
\alpha(\nabla\varphi_h,\nabla\psi)+\beta(\varphi_h,\psi)=(\beta d+l,\psi) \qquad \forall \psi\in \Vh
\end{equation}
possesses a unique solution $\varphi_h\in\Vh$ and the solution operator $\Phi_h\colon\Lzwo\times\Lzwo\to\Vh, \Phi_h(l,d)=\varphi_h$, is Lipschitz continuous with the same Lipschitz constant as its counterpart $\Phi$. Furthermore, the spatial error between the solution $\Phi(l,d)$ of the continuous equation \eqref{eq:weakvarphicont} and the discrete solution $\Phi_h(l,d)$ of the discrete equation \eqref{eq:spacediscretestatevarphi} is of order $h^2$, that is there exists a constant $C>0$ independent of $h$, such that
\begin{equation}
\label{eq:spatialerrorvarphi}
\Vert \Phi(l,d)-\Phi_h(l,d)\Vert \leq Ch^2\Vert\nabla^2 \Phi(l,d)\Vert
\end{equation}
holds true. 
\end{lemma}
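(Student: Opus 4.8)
The plan is to establish the two assertions separately: first existence and uniqueness of $\varphi_h\in\Vh$ together with the Lipschitz property of $\Phi_h$, then the $O(h^2)$ error bound. For the first part I would apply the Lax--Milgram theorem to the bilinear form $a(\varphi,\psi):=\alpha(\nabla\varphi,\nabla\psi)+\beta(\varphi,\psi)$ on $\Vh\times\Vh$; since $\alpha,\beta>0$ this form is continuous and coercive on $\Heinsnull$ (indeed on all of $\Heins$) with constants independent of $h$, and the right-hand side $\psi\mapsto(\beta d+l,\psi)$ is a bounded linear functional on $\Lzwo\supset\Vh$, so a unique $\varphi_h=\Phi_h(l,d)\in\Vh$ exists. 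For the Lipschitz estimate with constant $1$, I would take two data pairs $(l,p)$ and $(l,q)$, subtract the two equations, test with $\psi=\Phi_h(l,p)-\Phi_h(l,q)\in\Vh$, drop the nonnegative term $\alpha\Vert\nabla(\Phi_h(l,p)-\Phi_h(l,q))\Vert^2$, and use Cauchy--Schwarz on the right to obtain $\beta\Vert\Phi_h(l,p)-\Phi_h(l,q)\Vert^2\le\beta\Vert p-q\Vert\,\Vert\Phi_h(l,p)-\Phi_h(l,q)\Vert$, whence the claim; this is exactly the argument already used in the excerpt for $\Phi$, so it carries over verbatim with $\Vh$ in place of $\Heinsnull$.

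**For the error estimate** I would use the standard Aubin--Nitsche duality (Nitsche trick). Let $e:=\Phi(l,d)-\Phi_h(l,d)$. By construction $e$ satisfies the Galerkin orthogonality $a(e,\psi)=0$ for all $\psi\in\Vh$, so in particular $a(e,e)=a(e,e-\psi)$ for any $\psi\in\Vh$; taking $\psi=I_h\Phi(l,d)$ the nodal (or Clément/Scott--Zhang) interpolant and using continuity of $a$ plus the interpolation estimate $\Vert\Phi(l,d)-I_h\Phi(l,d)\Vert_{\Heins}\le Ch\Vert\nabla^2\Phi(l,d)\Vert$ gives $\Vert e\Vert_{\Heins}\le Ch\Vert\nabla^2\Phi(l,d)\Vert$. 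To upgrade to the $L^2$-rate, introduce the dual problem: find $w\in\Heinsnull$ with $a(v,w)=(e,v)$ for all $v\in\Heinsnull$. Since $\Omega$ is convex polygonal, elliptic regularity gives $w\in\Hzwei\cap\Heinsnull$ with $\Vert w\Vert_{\Hzwei}\le C\Vert e\Vert$. Then $\Vert e\Vert^2=a(e,w)=a(e,w-I_hw)\le C\Vert e\Vert_{\Heins}\,\Vert w-I_hw\Vert_{\Heins}\le C\Vert e\Vert_{\Heins}\,h\Vert w\Vert_{\Hzwei}\le Ch\,\Vert e\Vert_{\Heins}\,\Vert e\Vert$, which combined with the energy estimate yields $\Vert e\Vert\le Ch^2\Vert\nabla^2\Phi(l,d)\Vert$. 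The constant $C$ depends only on $\alpha$, $\beta$, and the regularity constant of $\Omega$, hence is independent of $h$.

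**I do not expect a serious obstacle here**, as the statement is a textbook $H^1$-FEM result for a coercive, symmetric, second-order elliptic problem on a convex polygonal domain; the only points requiring a word of care are (i) invoking $H^2$-regularity, which is exactly where convexity of $\Omega$ enters and is already assumed in the paper's standing hypotheses, and (ii) ensuring the interpolation operator used is well-defined on $\Hzwei$ (true in dimensions $N\in\{2,3\}$ by Sobolev embedding $\Hzwei\hookrightarrow C(\overline{\Omega})$, so the Lagrange interpolant suffices). In the write-up I would simply cite the classical references already present in the bibliography (e.g. \cite{GV85,LE98} for the elliptic regularity and a standard FEM text for the Nitsche argument) rather than reproduce the details, matching the level of detail the authors use elsewhere in this section.
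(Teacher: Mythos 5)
The paper states this lemma without proof, presenting it as a summary of well-known results for the $H^1$-conforming discretization of a coercive elliptic problem, so there is no in-paper argument to diverge from; your Lax--Milgram existence argument, the test-with-the-difference Lipschitz bound (identical to the one the authors use for $\Phi$ in Proposition 3.4), and the C\'ea-plus-Aubin--Nitsche duality step are exactly the standard proof being implicitly invoked, and all steps are correct (convexity of $\Omega$ supplying the $H^2$-regularity of the dual solution, as you note). No gaps; this is the right level of detail and matches the paper's intent.
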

As the existence of a unique $\vth(t)=\sum\limits_{m=1}^M\Phi_h(P_\tau l_{\vert I_m},d_{\tau h,m})\chi_{I_m}(t)$ is known from the above lemma for every $l\in\Lzweizwei$ and $\dth\in\Xth$ the existence of a unique solution $(\vth,\dth)\in\Vth\times\Xth$ follows as in the semidiscrete case by applying Banach's fixed point theorem to the fixed point problem in $\Xh$
\begin{equation}
\label{eq:discreteODEreduced}
d_{\tau h,m}=d_{\tau h,m-1}-\frac{\beta}{\delta}\tau_m(d_{\tau h,m}-\Phi_h(P_\tau l_{\vert I_m},d_{\tau h,m}))
\end{equation}
for every $m=1,...,M$ starting with $d_{\tau h,0}:=P^X_h(d_0)$. This gives us
\begin{lemma}\label{lem:existencediscretestateeq}
Let $l\in\Lzweizwei$ and $d_0\in\Lzwo$ be given. Then, the discrete state equation \eqref{eq:fullydiscretestate} possesses a unique solution $(\vth,\dth)\in\Vth\times\Xth$ . 
\end{lemma}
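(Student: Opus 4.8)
The plan is to follow the argument of the semidiscrete existence result (Proposition \ref{prop: semidiscretestateexistence}) almost verbatim, replacing the continuous elliptic solution operator $\Phi$ by its finite element counterpart $\Phi_h$ provided by Lemma \ref{lem:spatialerrorvh}. First I would test \eqref{eq:fullydiscretestate} with functions supported on a single subinterval $I_m$; since both $\Vth$ and $\Xth$ consist of functions piecewise constant in time, this decouples the fully discrete equation into $M$ successive problems posed on $I_m$, to be solved in the order $m=1,\dots,M$ starting from the initial value $d_{\tau h,0}:=P^X_h(d_0)\in\Xh$. On each $I_m$, choosing an arbitrary $\psi\in\Vh$ and $\lambda=0$ shows that the nonlocal component is slaved to the local one via $\vthm=\Phi_h(P_\tau l_{\vert I_m},\dthm)$, while choosing $\psi=0$ and $\lambda\in\Xh$ reduces the remaining equation to the scalar-in-time fixed point problem \eqref{eq:discreteODEreduced} in $\Xh$.

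The second step is to rewrite \eqref{eq:discreteODEreduced} in contractive form exactly as in the proof of Proposition \ref{prop: semidiscretestateexistence}: adding $\tfrac{\beta}{\delta}\tau_m\dthm$ to both sides yields the map
\[
F_{h,m}(v):=\frac{1}{1+\tfrac{\beta}{\delta}\tau_m}\left(d_{\tau h,m-1}+\tfrac{\beta}{\delta}\tau_m\,\Phi_h(P_\tau l_{\vert I_m},v)\right),\qquad v\in\Xh .
\]
This $F_{h,m}$ is a well-defined self-map of the finite element space $\Xh$ because $\Phi_h$ maps into $\Vh\subset\Xh$ and, inductively, $d_{\tau h,m-1}\in\Xh$. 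Using that $\Phi_h$ is Lipschitz continuous in its second argument with Lipschitz constant $1$ — the statement transferred to the discrete level in Lemma \ref{lem:spatialerrorvh} — one obtains
\[
\Vert F_{h,m}(p)-F_{h,m}(q)\Vert\le\frac{\tfrac{\beta}{\delta}\tau_m}{1+\tfrac{\beta}{\delta}\tau_m}\Vert p-q\Vert ,
\]
so $F_{h,m}$ is a contraction with factor strictly less than $1$, independent of $\tau$ and $h$. Banach's fixed point theorem then provides a unique $\dthm\in\Xh$, and hence a unique $\vthm\in\Vh$; running the induction over $m=1,\dots,M$ gives a unique $(\vth,\dth)\in\Vth\times\Xth$ solving \eqref{eq:fullydiscretestate}.

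Since every ingredient is already in place — the unique solvability and the Lipschitz estimate for $\Phi_h$ from Lemma \ref{lem:spatialerrorvh}, and the boundedness of the $L^2$-projections $P_\tau$ and $P^X_h$ — there is no genuine obstacle here; the only point needing a line of care is the verification that $F_{h,m}$ maps $\Xh$ into itself, which is immediate from the inclusion $\Vh\subset\Xh$. I would also remark explicitly that, in contrast to the discretizations in \cite{CC12,CC16}, no coupling condition between $\tau$ and $h$ enters, because the contraction factor depends only on $\beta/\delta$ and $\tau_m$.
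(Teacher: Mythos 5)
Your argument is correct and is essentially the paper's own proof: the paper likewise reduces \eqref{eq:fullydiscretestate} interval by interval to the fixed point problem \eqref{eq:discreteODEreduced} in $\Xh$ starting from $d_{\tau h,0}=P^X_h(d_0)$, and applies Banach's fixed point theorem exactly as in Proposition \ref{prop: semidiscretestateexistence}, using that $\Phi_h$ inherits the Lipschitz constant $1$ from $\Phi$ via Lemma \ref{lem:spatialerrorvh}. Your added remark that $F_{h,m}$ is a self-map of $\Xh$ because $\Vh\subset\Xh$ is a correct (and worthwhile) detail the paper leaves implicit.
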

Moreover, because of $\Vth\subset \Vtau, \Xth\subset\Xtau$ we directly have the spatial Galerkin orthogonality of the error $(\evh,\edh)=(\vt-\vth,\dt-\dth)$, that is
\begin{equation} \label{eq:Galerkinorthospace}
\B((\evh,\edh),(\psi,\lambda))=0 \quad \forall (\psi,\lambda)\in\Vth\times\Xth.
\end{equation}
As in the last paragraph we split the spatial errors $\evh=\vt-\vth,\edh=\dt-\dth$ as 
\[\evh=\vt-\vth=\underbrace{\vt-\pi^V_h\vt}_{=\etavh}+\underbrace{\pi^V_h\vt-\vth}_{=\xivh},\] 
\[\edh=\dt-\dth=\underbrace{\dt-\pi^X_h\dt}_{=\etadh}+\underbrace{\pi^X_h\dt-\dth}_{=\xidh}.\]
We also require the dual solution $(\zth,\pth)\in\Vth\times\Xth$ of the following problem:
\begin{equation} \label{eq:dualspacialerror}
\B((\psi,\lambda),(\zth,\pth))=(g_1,\psi)_{I\times\Omega}+(g_2,\lambda)_{I\times\Omega} \quad \forall (\psi,\lambda)\in\Vth\times\Xth
\end{equation}
with the terminal condition $p_T=0$ and $g_1=\evh$, $g_2=\edh$. \\

We will derive spatial error estimates with the same technique used for the temporal error. In particular, we need stability estimates for space-time discrete solutions. Fortunately, the results of Theorem \ref{thm:stabeststatetimethm} and Corollary \ref{cor:stabestdualtime} also hold true for space-time discrete solutions with only minor changes. We replace the Laplacian with its discrete counterpart $\Delta_h\colon \Vh\to\Vh$ defined via
\[(\Delta_h\varphi,\psi)=-(\nabla\varphi,\nabla\psi) \quad \forall \psi\in\Vh\]
and project the initial and final state onto $\Xh$ by means of the $L^2$-projection $P^X_h$. For the convenience of the reader, we state the estimates for the space-time discrete solutions.
\begin{theorem}
\label{thm:stabestimatesspace}
For the solution $(\vth,\dth)\in\Vth\times\Xth$ of the dG(0)cG(1) discretized state equation \eqref{eq:fullydiscretestate} with right-hand side $l\in\Lzweizwei$ and initial state $d_0\in\Lzwo$ the stability estimate
\begin{equation}
\label{eq:stabestimatespace}
\Vert\Delta_h\vth\Vert_{I\times\Omega}^2+\Vert\nabla\vth\Vert^2_{I\times\Omega}+\Vert\vth\Vert^2_{I\times\Omega}+\Vert\dth\Vert^2_{I\times\Omega}
+\sum\limits_{m=1}^M\tau_m^{-1}\Vert [\dth]_{m-1}\Vert^2\leq C\{\Vert l\Vert^2_{I\times\Omega}+\Vert P^X_hd_0\Vert^2\}
\end{equation}
holds true with a constant $C>0$ independent of $\tau$ and $h$. The jump term $[\dth]_0$ is defined as $d_{\tau h,0}^+-P^X_hd_0$.
\end{theorem}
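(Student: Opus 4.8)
The plan is to mimic the proof of Theorem~\ref{thm:stabeststatetimethm} at the fully discrete level, exploiting that the only structural change is the replacement of $\Delta$ by $\Delta_h$ and of $d_0$ by $P^X_h d_0$. First I would test the equation \eqref{eq:fullydiscretestate} restricted to a single subinterval $I_m$ (choosing test functions that vanish outside $I_m$) to obtain the fully discrete analogue of \eqref{eq:semidiscretestateoneinterval}. Choosing $\psi=0$, $\lambda=d_{\tau h,m}\in\Xh$ and $\psi=\varphi_{\tau h,m}\in\Vh$, $\lambda=0$ one derives, exactly as in Lemma~\ref{lem:stabestimateinftyzwo} and Corollary~\ref{col:stabestimatedtnonlinear}, first the pointwise bound $\Vert\varphi_{\tau h,m}\Vert\le\frac1\beta\Vert P_\tau l_{|I_m}\Vert+\Vert d_{\tau h,m}\Vert$ (using that $\Phi_h$ has Lipschitz constant $1$ in $d$, Lemma~\ref{lem:spatialerrorvh}) and then, by the same induction over $m$ starting from $d_{\tau h,0}=P^X_h d_0$,
\[
\Vert\dth\Vert_{\infty,2}\le C\{\Vert P^X_h d_0\Vert+\Vert l\Vert_{\Leinszwei}\}\le C\{\Vert P^X_h d_0\Vert+\Vert l\Vert_{I\times\Omega}\},
\]
hence $\Vert\dth\Vert_{I\times\Omega}\le C\{\Vert P^X_h d_0\Vert+\Vert l\Vert_{I\times\Omega}\}$; here $f\equiv 0$ simplifies matters.

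Next I would bound the remaining terms one by one, following \cite{MV08}, Thm.~4.1/4.3. Testing \eqref{eq:fullydiscretestate} with $\psi=\vth$, $\lambda=0$ and summing over $m=1,\dots,M$, then absorbing the cross term $\beta(\dth-\vth,\vth)_{I\times\Omega}$ via Young's inequality and inserting the bound on $\Vert\dth\Vert_{I\times\Omega}$ just obtained, yields $\Vert\nabla\vth\Vert_{I\times\Omega}^2+\Vert\vth\Vert_{I\times\Omega}^2\le C\{\Vert l\Vert_{I\times\Omega}^2+\Vert P^X_h d_0\Vert^2\}$. For the term $\Vert\Delta_h\vth\Vert_{I\times\Omega}$ I would test with $\psi=-\Delta_h\vth\in\Vh$ and $\lambda=0$; since $(\nabla\vth,\nabla(-\Delta_h\vth))=\Vert\Delta_h\vth\Vert^2$ by the definition of $\Delta_h$, this plays the role of the partial integration used in the semidiscrete proof and gives the desired bound after applying Young's inequality and the previously established estimates. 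Finally, for the jump contribution I would test with $\psi=0$ and $\lambda=[\dth]_{m-1}$ on each $I_m$; the term $(\partial_t\dth,\lambda)_{I_m\times\Omega}+([\dth]_{m-1},\lambda_{m-1}^+)$ reduces (for piecewise constants $\partial_t\dth|_{I_m}=0$) to $\tau_m^{-1}\Vert[\dth]_{m-1}\Vert^2$ times appropriate factors, and the right-hand side contributions are controlled by the bounds on $\Vert\dth\Vert_{I\times\Omega}$, $\Vert\vth\Vert_{I\times\Omega}$ and $\Vert l\Vert_{I\times\Omega}$, with the $m=1$ jump $[\dth]_0=d_{\tau h,0}^+-P^X_h d_0$ handled using $\Vert P^X_h d_0\Vert\le\Vert d_0\Vert$.

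The routine but slightly delicate point is keeping track of the constants so that they remain independent of both $\tau$ and $h$: every use of an inverse inequality must be avoided, and the contraction constant in the fixed-point argument for \eqref{eq:discreteODEreduced} is $\frac{(\beta/\delta)\tau_m}{1+(\beta/\delta)\tau_m}<1$ uniformly, exactly as in Proposition~\ref{prop: semidiscretestateexistence}. The main obstacle, if any, is the $\Delta_h$ term: one must verify that testing with $-\Delta_h\vth$ is legitimate (it is, since $\Delta_h\vth\in\Vh$) and that no spatial regularity of $\vth$ beyond membership in $\Vh$ is needed — this is precisely why $\Delta_h$ replaces $\Delta$, and it is the only place where the fully discrete argument genuinely departs from the semidiscrete one. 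Everything else is a verbatim transcription of the proofs of Lemma~\ref{lem:stabestimateinftyzwo}, Corollary~\ref{col:stabestimatedtnonlinear} and Theorem~\ref{thm:stabeststatetimethm}, which is why I would simply state the result and refer to those proofs with the indicated modifications.
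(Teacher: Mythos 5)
Your proposal is correct and follows essentially the same route the paper takes: the paper itself only remarks that the semidiscrete proofs (Lemma \ref{lem:stabestimateinftyzwo}, Corollary \ref{col:stabestimatedtnonlinear}, Theorem \ref{thm:stabeststatetimethm}) carry over verbatim once $\Delta$ is replaced by $\Delta_h$ and $d_0$ by $P^X_h d_0$, which is exactly the transcription you describe, including the key observations that $\Phi_h$ keeps Lipschitz constant $1$ and that testing with $-\Delta_h\vth\in\Vh$ substitutes for the integration by parts. No gaps.
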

\begin{corollary}
\label{cor:stabestimatesspacedual}
For the solution $(\zth,\pth)\in\Vth\times\Xth$ of the discrete dual state equation \eqref{eq:dualspacialerror} with right-hand sides $g_1,g_2\in\Lzweizwei$ and final state $p_T\in\Lzwo$ the stability estimate
\begin{align}
\label{eq:stabestimatespacedual}
\Vert\Delta_h\zth\Vert_{I\times\Omega}^2+\Vert\nabla\zth\Vert^2_{I\times\Omega}+\Vert\zth\Vert^2_{I\times\Omega}+\Vert\pth\Vert^2_{I\times\Omega}
+&\sum\limits_{m=1}^M\tau_m^{-1}\Vert [\pth]_{m}\Vert^2 \\&\leq C\{\Vert g_1\Vert^2_{I\times\Omega}+\Vert g_2\Vert^2_{I\times\Omega}+\Vert P^X_hp_T\Vert^2\} \nonumber
\end{align}
holds true with a constant $C>0$ independent of $\tau$ and $h$. The jump term $[\pth]_M$ is defined as $P^X_hp_T-p_{\tau h,M}^-$.
\end{corollary}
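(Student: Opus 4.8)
The plan is to transcribe the proofs of Theorem~\ref{thm:stabeststatetimethm}/Theorem~\ref{thm:stabestimatesspace} and of Corollary~\ref{cor:stabestdualtime} to the present setting, exploiting that — once the bilinear form is rewritten in the equivalent form \eqref{eq:semidiscreteBdual} — the dual equation \eqref{eq:dualspacialerror} (read with a general terminal forcing $(P^X_h p_T,\lambda^-_M)$ on the right-hand side, \eqref{eq:dualspacialerror} itself being the special case $p_T=0$) has exactly the algebraic structure of the fully discrete primal state equation with the time direction reversed; the only cosmetic differences are that $\Delta$ is replaced by the discrete Laplacian $\Delta_h$ and that the terminal datum enters through its $L^2$-projection $P^X_h p_T$.

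\emph{Step 1 (existence and uniqueness).} First I would insert representation \eqref{eq:semidiscreteBdual} into \eqref{eq:dualspacialerror} and test with functions supported on a single subinterval $I_m$. Since $\pth$ is piecewise constant in time the terms containing $\partial_t\pth$ vanish and one obtains a backward recursion: starting from the interval $I_M$, on which the terminal value $p_{\tau h,M}^-$ is tied to $P^X_h p_T$ through the scheme, for $m=M,M-1,\dots,1$ one has to determine $\zth\vert_{I_m}\in\Vh$ from an elliptic problem, which is uniquely solvable by Lemma~\ref{lem:spatialerrorvh} and, after inserting the data, reads $\zth\vert_{I_m}=\Phi_h(\,\cdot\,,\tfrac1\delta p_{\tau h,m})$, together with an algebraic update expressing $p_{\tau h,m}$ through the value $p_{\tau h,m+1}$ inherited from $I_{m+1}$ and through $\zth\vert_{I_m}$. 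Eliminating $\zth\vert_{I_m}$ exactly as in the proof of Proposition~\ref{prop: semidiscretestateexistence} turns this into a fixed-point equation for $p_{\tau h,m}$ whose associated map is a contraction with constant $\tfrac{(\beta/\delta)\tau_m}{1+(\beta/\delta)\tau_m}<1$, uniformly in $\tau$ and $h$, because $\Phi_h$ has Lipschitz constant $1$ (Lemma~\ref{lem:spatialerrorvh}) and the factor $\tfrac1\delta$ in its second argument combines with the coefficient $\beta\tau_m$ of $\zth\vert_{I_m}$ in the update to precisely this constant. Applying Banach's fixed point theorem on the subintervals in decreasing order then yields the unique solution $(\zth,\pth)\in\Vth\times\Xth$.

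\emph{Step 2 (stability).} Next I would reproduce the estimates behind Theorem~\ref{thm:stabeststatetimethm}, now running the induction backward in time. Testing \eqref{eq:dualspacialerror} with $\psi=0$, $\lambda=\pth$ on $I_m$ and combining with the elliptic bound $\Vert\zth\Vert_{I_m\times\Omega}\le C\{\Vert g_1\Vert_{I_m\times\Omega}+\Vert\pth\Vert_{I_m\times\Omega}\}$ obtained from \eqref{eq:semidiscreteBdual} by testing the elliptic part with $\zth$, a backward induction as in Lemma~\ref{lem:stabestimateinftyzwo} gives $\Vert\pth\Vert_{\infty,2}\le C\{\Vert P^X_h p_T\Vert+\Vert g_1\Vert_{\Leinszwei}+\Vert g_2\Vert_{\Leinszwei}\}$ and hence, via the continuous embeddings $L^\infty(\I;\Lzwo)\hookrightarrow\Lzweizwei\hookrightarrow\Leinszwei$, the $\Lzweizwei$-bound for $\pth$ (cf. Corollary~\ref{col:stabestimatedtnonlinear}). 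With this at hand the choice $\psi=\zth$, $\lambda=0$ bounds $\Vert\zth\Vert_{I\times\Omega}$ and $\Vert\nabla\zth\Vert_{I\times\Omega}$; the choice $\psi=-\Delta_h\zth$, $\lambda=0$, together with the identity $(\nabla\zth,\nabla(-\Delta_h\zth))=\Vert\Delta_h\zth\Vert^2$ that follows directly from the definition of $\Delta_h$, bounds $\Vert\Delta_h\zth\Vert_{I\times\Omega}$; and the choice $\psi=0$, $\lambda=[\pth]_m$ on $I_m$ controls $\sum_m\tau_m^{-1}\Vert[\pth]_m\Vert^2$. Adding the resulting inequalities gives \eqref{eq:stabestimatespacedual} with a constant depending only on the problem data.

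\emph{Expected main obstacle.} I do not anticipate a deep difficulty, since the argument is a transcription of the primal proofs; the two points that require care are the bookkeeping of the reversed time direction — so that the ``initial'' datum of the backward recursion is correctly identified with the projected terminal state $P^X_h p_T$, which is also the reason $\Vert P^X_h p_T\Vert$ rather than $\Vert p_T\Vert$ appears in \eqref{eq:stabestimatespacedual} — and the verification that passing from $\Delta$ to $\Delta_h$ costs nothing: since $-\Delta_h\zth(t)\in\Vh$ is an admissible test function and the identity used is precisely the defining relation of $\Delta_h$, the estimate for $\Vert\Delta_h\zth\Vert$ goes through unchanged. In particular no inverse inequality and no use of the quasi-uniformity of $\mathbb{T}_h$ enters, so the constant is genuinely independent of both $\tau$ and $h$.
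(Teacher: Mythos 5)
Your proposal is correct and follows exactly the route the paper intends: the paper itself gives no separate proof of this corollary, merely noting that Theorem~\ref{thm:stabeststatetimethm} and Corollary~\ref{cor:stabestdualtime} carry over to the fully discrete setting once $\Delta$ is replaced by $\Delta_h$ and the terminal datum by $P^X_h p_T$, and your backward recursion, contraction argument with constant $\frac{(\beta/\delta)\tau_m}{1+(\beta/\delta)\tau_m}$, and sequence of test-function choices ($\lambda=\pth$, $\psi=\zth$, $\psi=-\Delta_h\zth$, $\lambda$ proportional to $[\pth]_m$) are precisely the details being elided. The observation that $-\Delta_h\zth\in\Vth$ is admissible and that $(\nabla\zth,\nabla(-\Delta_h\zth))=\Vert\Delta_h\zth\Vert^2$ by the definition of $\Delta_h$, so no inverse inequality is needed, is the right justification for the $h$-independence of the constant.
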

Similar to Lemma \ref{lem:errortimefirstlemma} we have the following result
\begin{lemma} \label{lem:projectioninbilinearformlinear}
For $(\psi,\lambda)\in\Vth\times\Xth$ we have the following identity for the projection errors
\begin{equation} \label{eq:projectionspace1}
\B((\etavh,\etadh),(\psi,\lambda))=\alpha (\nabla\etavh,\nabla\psi)_{I\times\Omega}-\frac{\beta}{\delta}(\etavh,\lambda)_{I\times\Omega}
\end{equation}
\end{lemma}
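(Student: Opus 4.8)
The plan is to mimic the structure of the proof of Lemma \ref{lem:errortimefirstlemma}, but now keeping track of which terms survive because the spatial projections $\pi^V_h$ and $\pi^X_h$ — being the pointwise-in-time $L^2$-projections $P^V_h$ and $P^X_h$ — do \emph{not} respect the $H^1$-inner product, in contrast to the temporal case where $P_\tau$ satisfied the orthogonality \eqref{eq:nablaPtvarphi}. First I would write out $\B((\etavh,\etadh),(\psi,\lambda))$ using the original representation of the semidiscrete bilinear form (the one with the temporal-derivative and jump terms on the $d$-component). The key observation is that $\etavh$ and $\etadh$ are piecewise constant in time on the same temporal mesh $\{I_m\}$, so $\partial_t\etadh = 0$ on each $I_m$; and since $\pi^X_h$ acts pointwise in time, the nodal values $\etadh(t_m)$ satisfy $\etadh(t_m) = \dt(t_m) - P^X_h \dt(t_m)$, which is $L^2(\Omega)$-orthogonal to $\Xh$. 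Hence all the jump terms $([\etadh]_{m-1},\lambda_{m-1}^+)$ and the initial term $(\etadh{}^+_{\tau,0},\lambda_0^+)$ vanish for $\lambda \in \Xth$, as do the temporal-derivative sums.

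What remains of $\B((\etavh,\etadh),(\psi,\lambda))$ is then
\[
\alpha(\nabla\etavh,\nabla\psi)_{I\times\Omega} - \beta(\etadh - \etavh,\psi)_{I\times\Omega} + \frac{\beta}{\delta}(\etadh - \etavh,\lambda)_{I\times\Omega}.
\]
Now I would use that $\psi(t)\in\Vh$ and $\lambda(t)\in\Xh$ for a.a.\ $t$, together with the defining orthogonality of the pointwise $L^2$-projections: $(\etadh(t),\lambda(t)) = (\dt(t) - P^X_h\dt(t),\lambda(t)) = 0$ since $\Vh \subset \Xh$ contains $\lambda(t)$, and likewise $(\etavh(t),\psi(t)) = 0$ and $(\etavh(t),\lambda(t)) = 0$ because $\etavh(t) = \vt(t) - P^V_h\vt(t) \perp \Vh$ and $\lambda(t),\psi(t)\in\Vh$ (for the $\lambda$ term one uses $\Vh\subset\Xh$, hence the test function $\lambda(t)$ lies in $\Xh$ but the relevant orthogonality is of $\etavh(t)$ against $\Vh$; more carefully, $(\etavh,\lambda)_{I\times\Omega}$ need not vanish, so this term must be retained — see below). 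Integrating these pointwise identities over $I$ kills $\beta(\etadh,\psi)_{I\times\Omega}$, $\beta(\etavh,\psi)_{I\times\Omega}$, and $\frac{\beta}{\delta}(\etadh,\lambda)_{I\times\Omega}$, leaving exactly $\alpha(\nabla\etavh,\nabla\psi)_{I\times\Omega} - \frac{\beta}{\delta}(\etavh,\lambda)_{I\times\Omega}$, which is the claimed identity \eqref{eq:projectionspace1}.

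The main obstacle — and the point requiring genuine care rather than bookkeeping — is getting the list of surviving terms exactly right, since the spatial projections behave asymmetrically: $P^V_h$ annihilates $L^2$-pairings against $\Vh$ but \emph{not} $H^1$-pairings (that is why the $\alpha(\nabla\etavh,\nabla\psi)$ term persists, unlike in the temporal Lemma \ref{lem:errortimefirstlemma}), and $P^X_h$ annihilates pairings against $\Xh$. One must check, in particular, that $(\etavh,\lambda)_{I\times\Omega}$ does \emph{not} vanish in general — the test function $\lambda$ ranges over $\Xth$, and $\etavh(t)$ is only orthogonal to $\Vh$, not to the larger space $\Xh$ — which is precisely why the term $-\tfrac{\beta}{\delta}(\etavh,\lambda)_{I\times\Omega}$ appears on the right-hand side. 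The verification that $\etadh$'s nodal values are orthogonal to $\Xh$ (so the jump/derivative terms drop) is the analogue of the $\eta^{d,-}_{\tau,m}=0$ step in Lemma \ref{lem:errortimefirstlemma}, with $0$ replaced by the weaker statement "$\perp\Xh$"; since all jump and derivative terms are tested against $\lambda^+_{m-1},\lambda_0^+\in\Xh$ or integrated against $\lambda\in\Xth$, orthogonality suffices.
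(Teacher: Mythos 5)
Your argument is correct and follows the same route as the paper: insert $(\etavh,\etadh)$ into the original (primal) representation of $\B$, use that $\etadh$ is piecewise constant in time with nodal values $L^2(\Omega)$-orthogonal to $\Xh$ to kill the derivative, jump and initial terms, use the pointwise-in-time orthogonality of $\etavh$ against $\Vh$ and of $\etadh$ against $\Xh\supset\Vh$ to kill the remaining zero-order couplings, and observe that $\alpha(\nabla\etavh,\nabla\psi)_{I\times\Omega}$ and $-\tfrac{\beta}{\delta}(\etavh,\lambda)_{I\times\Omega}$ survive because $P^V_h$ is not the Ritz projection and $\etavh$ is orthogonal only to $\Vh$, not to $\Xh$. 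The momentary slip where you first assert $(\etavh(t),\lambda(t))=0$ is immediately and correctly retracted, so the final identity and its justification match the paper's (much terser) proof.
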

\begin{proof}
The assertion follows directly if we insert the projections errors into the original formulation of the bilinear form
and make use of the definition of $\pi_h$. Note, that the scalar product containing $\etavh$ 
and a test function in $\lambda\in\Xth$ does not vanish as the projection error is orthogonal only to the subspace $\Vth\subset\Xth$. 
\end{proof}
Next, we prove the boundedness of the errors by certain projection errors.
\begin{lemma} \label{lem:boundednesserrorspatial}
There exists a constant $C>0$ independent of $\tau$ and $h$ such that the discretization errors are bounded by projection errors, that is we have
\begin{equation}
\label{eq:boundednesserrorspatial}
\Vert\evh\Vert_{I\times\Omega}+\Vert\edh\Vert_{I\times\Omega}\leq C\{\Vert\etavh\Vert_{I\times\Omega}+\Vert\etadh\Vert_{I\times\Omega}+\Vert\vt-\rho_h\vt\Vert_{I\times\Omega}
\}.
\end{equation}
\end{lemma}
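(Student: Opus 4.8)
The plan is to mimic the duality argument behind Lemma \ref{lem:timeerrorvseta}, the one genuinely new point being the gradient contribution that the $L^2$-projection $\pi^V_h$ leaves in Lemma \ref{lem:projectioninbilinearformlinear}. First I would take the dual solution $(\zth,\pth)\in\Vth\times\Xth$ of \eqref{eq:dualspacialerror} with data $g_1=\evh$, $g_2=\edh$ and $p_T=0$; by Corollary \ref{cor:stabestimatesspacedual} it exists, is unique, and satisfies $\Vert\Delta_h\zth\Vert_{I\times\Omega}+\Vert\pth\Vert_{I\times\Omega}\le C(\Vert\evh\Vert_{I\times\Omega}+\Vert\edh\Vert_{I\times\Omega})$. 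Splitting $\evh=\etavh+\xivh$, $\edh=\etadh+\xidh$, using the symmetry of the $L^2$-inner product and testing \eqref{eq:dualspacialerror} with $(\psi,\lambda)=(\xivh,\xidh)\in\Vth\times\Xth$ gives
\[\Vert\evh\Vert^2_{I\times\Omega}+\Vert\edh\Vert^2_{I\times\Omega}=(\etavh,\evh)_{I\times\Omega}+(\etadh,\edh)_{I\times\Omega}+\B((\xivh,\xidh),(\zth,\pth)).\]
The spatial Galerkin orthogonality \eqref{eq:Galerkinorthospace} yields $\B((\xivh,\xidh),(\zth,\pth))=-\B((\etavh,\etadh),(\zth,\pth))$, and Lemma \ref{lem:projectioninbilinearformlinear} (with $\psi=\zth$, $\lambda=\pth$) rewrites this as $-\alpha(\nabla\etavh,\nabla\zth)_{I\times\Omega}+\frac{\beta}{\delta}(\etavh,\pth)_{I\times\Omega}$.

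Of the resulting terms, $(\etavh,\evh)_{I\times\Omega}$, $(\etadh,\edh)_{I\times\Omega}$ and $\frac{\beta}{\delta}(\etavh,\pth)_{I\times\Omega}$ are harmless: Cauchy--Schwarz together with the dual stability bound estimates them by $C(\Vert\etavh\Vert_{I\times\Omega}+\Vert\etadh\Vert_{I\times\Omega})(\Vert\evh\Vert_{I\times\Omega}+\Vert\edh\Vert_{I\times\Omega})$. The main obstacle is the term $-\alpha(\nabla\etavh,\nabla\zth)_{I\times\Omega}$, because $\nabla(\vt-\pi^V_h\vt)$ is not $L^2$-orthogonal to $\nabla\Vth$ and can only be controlled (with order $h$) by second spatial derivatives of $\vt$, which is exactly why the Ritz-projection error $\vt-\rho_h\vt$ must enter the estimate. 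I would resolve it by inserting $\etavh=(\vt-\rho_h\vt)+(\rho_h\vt-\pi^V_h\vt)$: the first summand contributes nothing since $(\nabla(\vt-\rho_h\vt),\nabla\zth)_{I\times\Omega}=0$ by the defining property of $R_h$ and $\zth(t)\in\Vh$, whereas the second lies in $\Vth$, so integration by parts through the discrete Laplacian gives $-\alpha(\nabla(\rho_h\vt-\pi^V_h\vt),\nabla\zth)_{I\times\Omega}=\alpha(\rho_h\vt-\pi^V_h\vt,\Delta_h\zth)_{I\times\Omega}$. Cauchy--Schwarz, the triangle inequality $\Vert\rho_h\vt-\pi^V_h\vt\Vert_{I\times\Omega}\le\Vert\vt-\rho_h\vt\Vert_{I\times\Omega}+\Vert\etavh\Vert_{I\times\Omega}$, and the stability bound for $\Vert\Delta_h\zth\Vert_{I\times\Omega}$ then control this term by $C(\Vert\vt-\rho_h\vt\Vert_{I\times\Omega}+\Vert\etavh\Vert_{I\times\Omega})(\Vert\evh\Vert_{I\times\Omega}+\Vert\edh\Vert_{I\times\Omega})$.

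Collecting everything gives $\Vert\evh\Vert^2_{I\times\Omega}+\Vert\edh\Vert^2_{I\times\Omega}\le C(\Vert\etavh\Vert_{I\times\Omega}+\Vert\etadh\Vert_{I\times\Omega}+\Vert\vt-\rho_h\vt\Vert_{I\times\Omega})(\Vert\evh\Vert_{I\times\Omega}+\Vert\edh\Vert_{I\times\Omega})$, and dividing by $\Vert\evh\Vert_{I\times\Omega}+\Vert\edh\Vert_{I\times\Omega}$ (after using $a^2+b^2\ge\frac12(a+b)^2$, the case of a vanishing right factor being trivial) yields \eqref{eq:boundednesserrorspatial}. Apart from the gradient term, the only point needing care is that the jump terms, the discrete time-derivative term and the $L^2$-products of $\etadh$ against test functions from $\Vth\times\Xth$ all cancel --- but this is precisely the content of Lemma \ref{lem:projectioninbilinearformlinear}, which may be quoted verbatim.
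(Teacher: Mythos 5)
Your proposal is correct and follows essentially the same route as the paper's proof: the discrete dual problem with data $(\evh,\edh)$, spatial Galerkin orthogonality, Lemma \ref{lem:projectioninbilinearformlinear}, and the insertion of the Ritz projection followed by the discrete Laplacian to handle the gradient term, concluding with the dual stability estimates of Corollary \ref{cor:stabestimatesspacedual}. Your explicit treatment of the division step and the triangle inequality for $\Vert\rho_h\vt-\pi^V_h\vt\Vert_{I\times\Omega}$ is exactly what the paper does implicitly.
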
 
\begin{proof}
Let $(\zth,\pth)\in\Vth\times\Xth$ be the solution of the discrete dual equation \eqref{eq:dualspacialerror} with right-hand sides $g_1=\evh$ and $g_2=\edh$ and terminal state $p_T=0$. By employing the spatial Galerkin-orthogonality we have the following representation of the error
\begin{align*}
(\evh,\evh)_{I\times\Omega}+(\edh,\edh)_{I\times\Omega}&=(\xivh,\evh)_{I\times\Omega}+(\etavh,\evh)_{I\times\Omega}+(\xidh,\edh)_{I\times\Omega}+(\etadh,\edh)_{I\times\Omega} \\
&= (\etavh,\evh)_{I\times\Omega}+(\etadh,\edh)_{I\times\Omega}+\B((\xivh,\xidh),(\zth,\pth)) \\
&=(\etavh,\evh)_{I\times\Omega}+(\etadh,\edh)_{I\times\Omega}-\B((\etavh,\etadh),(\zth,\pth)) \\
&=(\etavh,\evh)_{I\times\Omega}+(\etadh,\edh)_{I\times\Omega}-\alpha(\nabla\etavh,\nabla\zth)_{I\times\Omega}+\frac{\beta}{\delta}(\etavh,\pth)_{I\times\Omega} \\
&=(\etavh,\evh)_{I\times\Omega}+(\etadh,\edh)_{I\times\Omega}-\alpha(\nabla(\vt-\pi^V_h\vt),\nabla\zth)_{I\times\Omega}+\frac{\beta}{\delta}(\etavh,\pth)_{I\times\Omega} \\
\end{align*}
Next, we apply the Ritz-projection to $\vt$. Afterwards, we are allowed to use the definition of the discrete Laplacian. This leads to
\begin{align*}
&(\evh,\evh)_{I\times\Omega}+(\edh,\edh)_{I\times\Omega}=\\
&(\etavh,\evh)_{I\times\Omega}+(\etadh,\edh)_{I\times\Omega}+\alpha(\rho_h\vt-\pi^V_h\vt,\Delta_h\zth)_{I\times\Omega}+\frac{\beta}{\delta}(\etavh,\pth)_{I\times\Omega} \\
&\leq C(\Vert\etavh\Vert_{I\times\Omega}+\Vert\etadh\Vert_{I\times\Omega})(\Vert\evh\Vert_{I\times\Omega}+\Vert\edh\Vert_{I\times\Omega}+\Vert\pth\Vert_{I\times\Omega})
\\ &+\alpha(\Vert\etavh\Vert_{I\times\Omega}+\Vert\vt-\rho_h\vt\Vert_{I\times\Omega})\Vert\Delta_h\zth\Vert_{I\times\Omega}.
\end{align*} 
Finally, with the stability estimates of Corollary \ref{cor:stabestimatesspacedual} we obtain
\[(\evh,\evh)_{I\times\Omega}+(\edh,\edh)_{I\times\Omega}\leq C(\Vert\etavh\Vert_{I\times\Omega}+\Vert\etadh\Vert_{I\times\Omega}+\Vert\vt-\rho_h\vt\Vert_{I\times\Omega})(\Vert\evh\Vert_{I\times\Omega}+\Vert\edh\Vert_{I\times\Omega}).\]
\end{proof}
We are now in the position to prove our main result regarding the spatial error. The required regularity for $\dt$ is assured if we assume $d_0\in\Hzwei$. 
\begin{theorem} \label{thm:spacialerrorestimate}
Let $l\in\Lzweizwei$ and $d_0\in\Hzwei$ be given. For the errors $\evh=\vt-\vth$ and $\edh=\dt-\dth$ between the dG(0) semidiscretized solutions $\vt\in\Vtau,\dt\in\Xtau$ of \eqref{eq:semidiscretestatelinear} and the fully dG(0)cG(1) discretized solutions $\vth\in\Vth,\dth\in\Xth$ of \eqref{eq:fullydiscretestate}, we have the error estimate
\[\Vert \evh\Vert_{I\times\Omega}+\Vert \edh\Vert_{I\times\Omega}\leq Ch^2\{\Vert\nabla^2\vt\Vert_{I\times\Omega}+\Vert\nabla^2\dt\Vert_{I\times\Omega}\},\]
where the constant $C$ is independent of the mesh size $h$ and the temporal discretization parameter $\tau$. 
\end{theorem}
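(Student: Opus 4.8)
The plan is to feed the abstract bound of Lemma~\ref{lem:boundednesserrorspatial},
\[
\Vert\evh\Vert_{I\times\Omega}+\Vert\edh\Vert_{I\times\Omega}\le C\{\Vert\etavh\Vert_{I\times\Omega}+\Vert\etadh\Vert_{I\times\Omega}+\Vert\vt-\rho_h\vt\Vert_{I\times\Omega}\},
\]
into classical $O(h^2)$ spatial approximation estimates for the $L^2$-projections $P^V_h,P^X_h$ and for the Ritz projection $R_h$. The structural prerequisite is enough spatial regularity of the semidiscrete states: since $d_0\in\Hzwei$ and $l\in\Lzweizwei$, Proposition~\ref{prop: semidiscretestateexistence} (applied with $f=0$) gives $\dt\in L^2(\I;\Hzwei)$, and on each $I_m$ the function $\vt$ equals $\Phi(P_\tau l_{\vert I_m},d_{\tau,m})\in\Hzwei\cap\Heinsnull$, bounded in $\Hzwei$ by~\eqref{eq:abschvarphicont}; hence $\nabla^2\vt,\nabla^2\dt\in\Lzweizwei$ and the right-hand side of the asserted estimate is finite.

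For the $L^2$-projection terms I would argue pointwise in time. By definition $(\pi^X_h\dt)(t)=P^X_h(\dt(t))$, and for $w\in\Hzwei$ the standard linear finite element bound (best approximation of $P^X_h$ combined with nodal interpolation) yields $\Vert w-P^X_h w\Vert\le Ch^2\Vert\nabla^2 w\Vert$ with $C$ depending only on the shape regularity of $\mathbb{T}_h$; applying this to $w=\dt(t)$ for a.e.\ $t$ and integrating in time gives $\Vert\etadh\Vert_{I\times\Omega}\le Ch^2\Vert\nabla^2\dt\Vert_{I\times\Omega}$. Since $\vt(t)\in\Hzwei\cap\Heinsnull$ vanishes on $\partial\Omega$, its nodal interpolant lies in $\Vh$, so the same reasoning applied to $P^V_h$ gives $\Vert\etavh\Vert_{I\times\Omega}\le Ch^2\Vert\nabla^2\vt\Vert_{I\times\Omega}$.

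For the Ritz term I would invoke the Aubin--Nitsche duality argument: because $\Omega$ is convex polygonal, the homogeneous Dirichlet Laplacian is $H^2$-regular, hence $\Vert w-R_h w\Vert\le Ch^2\Vert\nabla^2 w\Vert$ for every $w\in\Hzwei\cap\Heinsnull$; applying this to $w=\vt(t)$ and integrating in time yields $\Vert\vt-\rho_h\vt\Vert_{I\times\Omega}\le Ch^2\Vert\nabla^2\vt\Vert_{I\times\Omega}$. Inserting the three bounds into Lemma~\ref{lem:boundednesserrorspatial} then completes the proof. I do not anticipate a serious obstacle here: the one point deserving attention is that all three projection constants are purely spatial and therefore independent of $\tau$ (and of the number of time steps $M$), and that the regularity $\dt,\vt\in L^2(\I;\Hzwei)$ is genuinely available, which is precisely why the hypothesis $d_0\in\Hzwei$ is imposed --- without it $\Vert\nabla^2\dt\Vert_{I\times\Omega}$ on the right-hand side need not even be finite.
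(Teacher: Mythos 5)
Your proposal is correct and follows the same route as the paper: it feeds Lemma~\ref{lem:boundednesserrorspatial} into the standard $O(h^2)$ approximation estimates for $P^V_h$, $P^X_h$ and the Ritz projection $R_h$, which is exactly what the paper's (very terse) proof does. Your additional care in verifying that $\vt,\dt\in L^2(\I;\Hzwei)$ via Proposition~\ref{prop: semidiscretestateexistence} and \eqref{eq:abschvarphicont} is a welcome filling-in of details the paper leaves implicit.
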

\begin{proof}
Based on the previous results and due to the definition of $\pi^V_h,\pi^X_h$ and $\rho_h$ the estimate directly follows from the approximation properties of the $L^2$-projections $P^V_h,P^X_h$ and the Ritz-projection $R_h$. 
\end{proof}
To summarize the main results of this section, we state the overall error estimate:
\begin{theorem}
\label{thm:overallerrorestimate}
Let $l\in\Lzweizwei, d_0\in\Hzwei$ and Assumption \ref{ass:regularityvarphi} be fulfilled. For the errors $\varphi-\vth$ and $d-\dth$ between the continuous solutions $\varphi\in V,d\in X$ of \eqref{eq:weakformcont} and the fully dG(0)cG(1) discretized solutions $\vth\in\Vth,\dth\in\Xth$ of \eqref{eq:fullydiscretestate}, we have the error estimate
\begin{align*}
\Vert \varphi-\vth\Vert_{I\times\Omega}+\Vert d-\dth\Vert_{I\times\Omega}&\leq 
C\tau\{\Vert \partial_t\varphi\Vert_{I\times\Omega}+\Vert\partial_t d\Vert_{I\times\Omega}\} \\
&+  Ch^2\{\Vert\nabla^2\vt\Vert_{I\times\Omega}+\Vert\nabla^2\dt\Vert_{I\times\Omega}\},\end{align*}
where the constant $C$ is independent of the mesh size $h$ and the temporal discretization parameter $\tau$. 
\end{theorem}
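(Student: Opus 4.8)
The plan is to obtain the estimate by a straightforward triangle inequality, inserting the dG(0) semidiscrete solution $(\vt,\dt)\in\Vtau\times\Xtau$ of \eqref{eq:semidiscretestatelinear} as an intermediate object between the continuous solution $(\varphi,d)$ and the fully discrete solution $(\vth,\dth)\in\Vth\times\Xth$. Writing
\[
\varphi-\vth=(\varphi-\vt)+(\vt-\vth),\qquad d-\dth=(d-\dt)+(\dt-\dth),
\]
the triangle inequality in $\Lzweizwei$ reduces the claim to bounding the purely temporal error $\Vert\varphi-\vt\Vert_{I\times\Omega}+\Vert d-\dt\Vert_{I\times\Omega}$ and the purely spatial error $\Vert\vt-\vth\Vert_{I\times\Omega}+\Vert\dt-\dth\Vert_{I\times\Omega}$ separately.

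For the first contribution I would invoke Theorem \ref{thm:temporalerrorestimate}, whose hypotheses ($l\in\Lzweizwei$, $d_0\in\Lzwo$, and Assumption \ref{ass:regularityvarphi}) are all part of the assumptions of the present theorem; this yields the term $C\tau\{\Vert\partial_t\varphi\Vert_{I\times\Omega}+\Vert\partial_t d\Vert_{I\times\Omega}\}$. For the second contribution I would invoke Theorem \ref{thm:spacialerrorestimate}; here one has to check that its regularity requirement on $\dt$ is met, which is precisely why we assume $d_0\in\Hzwei$: Proposition \ref{prop: semidiscretestateexistence} then guarantees $\dt\in L^2(\I;\Hzwei)$, and the elliptic regularity bound \eqref{eq:abschvarphicont}, applied on each subinterval $I_m$ to $\varphi_{\tau,m}=\Phi(P_\tau l_{\vert I_m},d_{\tau,m})$, shows $\vt\in L^2(\I;\Hzwei)$ as well, so that $\Vert\nabla^2\vt\Vert_{I\times\Omega}$ and $\Vert\nabla^2\dt\Vert_{I\times\Omega}$ are finite. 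Theorem \ref{thm:spacialerrorestimate} then contributes the term $Ch^2\{\Vert\nabla^2\vt\Vert_{I\times\Omega}+\Vert\nabla^2\dt\Vert_{I\times\Omega}\}$, with a constant independent of both $\tau$ and $h$. Adding the two bounds and relabelling the (universal) constant gives the claimed estimate.

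Since both ingredients are already established, there is no genuine obstacle here; the only point requiring a moment's care is the bookkeeping of hypotheses — in particular making sure that the stronger assumption $d_0\in\Hzwei$ (rather than merely $d_0\in\Lzwo$) is what legitimises the application of Theorem \ref{thm:spacialerrorestimate}, and that Assumption \ref{ass:regularityvarphi} is what legitimises the application of Theorem \ref{thm:temporalerrorestimate}. If one preferred a fully data-dependent right-hand side, one could additionally estimate $\Vert\nabla^2\vt\Vert_{I\times\Omega}$ and $\Vert\nabla^2\dt\Vert_{I\times\Omega}$ in terms of $\Vert l\Vert_{I\times\Omega}$ and $\Vert d_0\Vert_{\Hzwei}$ via the stability estimate of Theorem \ref{thm:stabeststatetimethm} together with \eqref{eq:abschvarphicont}, but we keep the semidiscrete norms on the right-hand side exactly as in the statement.
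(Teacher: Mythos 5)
Your proposal is correct and coincides with the paper's (implicit) argument: the theorem is stated as a summary obtained by the triangle inequality from Theorem \ref{thm:temporalerrorestimate} and Theorem \ref{thm:spacialerrorestimate}, exactly as you describe. Your bookkeeping of hypotheses — Assumption \ref{ass:regularityvarphi} for the temporal part and $d_0\in\Hzwei$ (via Proposition \ref{prop: semidiscretestateexistence}) for the spatial part — matches the paper's remarks preceding the two constituent theorems.
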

\section{Error estimates for the associated optimal control problem}
In this section we turn our attention towards an associated optimal control problem and its discretization. To be more specific, we want to estimate the error between the continuous optimal control and its space-time discretization of the following optimal control problem governed by our PDE-ODE-system \eqref{mod:linmodbegin}-\eqref{mod:linmodend}
\begin{equation} \label{eq:linoptcontprob}
\min J(\varphi,d,l)=\frac{1}{2}\Vert\varphi-\varphi_d\Vert_{I\times\Omega}^2+\frac{1}{2}\Vert d-d_d\Vert_{I\times\Omega}^2+\frac{\alpha_l}{2}\Vert l\Vert^2_{I\times\Omega}
\end{equation}
with $(\varphi_d,d_d)\in\Lzweizwei^2$ being two given desired states, $\alpha_l>0$ being a regularization parameter and $(\varphi,d)\in V\times X$ being the weak solution of \eqref{mod:linmodbegin}-\eqref{mod:linmodend} for the right hand side $l\in\Lzweizwei$. 
Before discretizing the control we first have a look at the optimal control problem at the continuous, the semidiscretized and space-time discretized level. All these levels of discretization will be referred to a \textit{variational approaches} in accordance with \cite{MH05}. The case with a discrete control will be referred to as the \textit{fully discretized problem}.
\subsection{The optimal control problem on different levels of discretization}
First, we define the control-to-state operator $S\colon\Lzweizwei\to V\times X$ given by $S(l)=(\varphi,d)=(S_1(l),S_2(l))$. Similar to \cite{S17}, Rem. 3.24, and by means of \cite{S17}, Lem. 5.10, it can be shown, that \\ $S_2 \colon \Lzweizwei \to X$ is Lipschitz continuous. Thus, $S$ is Lipschitz continuous as well. Using the control-to-state operator we can reduce the optimal control problem to the control variable
\begin{equation} \label{eq:reducedoptimalcontrolprob}
\min  j(l):=J(S(l),l) \text{ subject to } l\in\Lzweizwei.
\end{equation}
\begin{theorem} \label{thm:existenceoptimalcontrol}
For given desired states $\varphi_d,d_d\in\Lzweizwei$, initial value $d_0\in\Lzwo$ and $\alpha_l>0$ the optimal control problem \eqref{eq:reducedoptimalcontrolprob} admits a unique solution $\ovl\in\Lzweizwei$ with corresponding state $(\ovv,\ovd)\in V\times X$. 
\end{theorem}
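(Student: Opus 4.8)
The plan is to use the direct method of the calculus of variations, which applies here because the reduced cost functional $j$ is convex and coercive. The key structural fact is that the control-to-state operator $S\colon\Lzweizwei\to V\times X$ is affine-linear: indeed, the state equation \eqref{eq:weakformcont} is linear in $(\varphi,d)$ and $l$, and the map $l\mapsto S(l)$ differs from a linear map only by the contribution of the fixed initial datum $d_0$. (This already follows from the Lipschitz continuity of $S_2$ cited above together with linearity of the bilinear form $B$.)

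\textbf{Existence.} First I would note that $j(l)=J(S(l),l)\ge \frac{\alpha_l}{2}\Vert l\Vert^2_{I\times\Omega}\ge 0$, so $j$ is bounded below; let $(l_n)\subset\Lzweizwei$ be a minimizing sequence. The term $\frac{\alpha_l}{2}\Vert l_n\Vert^2_{I\times\Omega}$ being bounded gives that $(l_n)$ is bounded in $\Lzweizwei$, hence (after passing to a subsequence) $l_n\rightharpoonup\ovl$ weakly in $\Lzweizwei$ for some $\ovl$. Since $S$ is affine and continuous (Lipschitz), it is weakly continuous, so $S_1(l_n)\rightharpoonup S_1(\ovl)$ in $V$ and $S_2(l_n)\rightharpoonup S_2(\ovl)$ in $X$; in particular both converge weakly in $\Lzweizwei$. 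The functional $J$ is a sum of squared norms of affine-continuous images of $(\varphi,d,l)$, hence convex and strongly continuous on $V\times X\times\Lzweizwei$, therefore weakly lower semicontinuous. Consequently $j(\ovl)\le\liminf_n j(l_n)=\inf j$, so $\ovl$ is a minimizer, with corresponding state $(\ovv,\ovd):=S(\ovl)\in V\times X$.

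\textbf{Uniqueness.} I would argue that $j$ is strictly convex. Write $j(l)=\frac12\Vert S_1(l)-\varphi_d\Vert^2_{I\times\Omega}+\frac12\Vert S_2(l)-d_d\Vert^2_{I\times\Omega}+\frac{\alpha_l}{2}\Vert l\Vert^2_{I\times\Omega}$. The first two terms are convex (composition of a convex function with an affine map), and the last term $\frac{\alpha_l}{2}\Vert\cdot\Vert^2_{I\times\Omega}$ is strictly convex because $\alpha_l>0$. A sum of a convex and a strictly convex function is strictly convex, so $j$ has at most one minimizer; combined with existence, the minimizer $\ovl$ is unique.

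\textbf{Main obstacle.} There is no serious obstacle; the only point requiring mild care is justifying weak lower semicontinuity of the $L^2$-tracking terms in $\varphi$ and $d$. This is handled by the standard fact that a continuous convex functional on a Banach space is weakly sequentially lower semicontinuous, applied to the norms on $\Lzweizwei$ together with the (weak) continuity of $S$ into $V\times X\hookrightarrow\Lzweizwei\times\Lzweizwei$. Everything else is routine, so I would keep the write-up short, citing the affine-linearity of $S$ and strict convexity of $j$ as the two load-bearing facts.
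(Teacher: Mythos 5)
Your proposal is correct and is essentially the argument the paper relies on: the paper simply cites the standard existence and uniqueness theorem for quadratic optimal control problems with linear (affine) control-to-state maps, whose proof is exactly your direct method plus strict convexity from the Tikhonov term. No discrepancies to report.
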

\begin{proof}
See \cite{FT09}, Thm. 2.16 for the proof of existence and uniqueness. 
\end{proof}
The necessary optimality condition is given as
\begin{equation} \label{eq:fonlinearprimal}
j^\prime(\overline{l})(\delta l)=\partial_{(\varphi,d)}J(S(l),l)S^\prime(l)(\delta l)+\partial_l J(S(l),l)(\delta l)=0 \qquad \forall \delta l\in\Lzweizwei.
\end{equation}
The derivative of the control-to-state operator $S$ in direction $\delta l$ is the solution of the same variational problem with initial value $\delta d(0)=0$ and right-hand side $\delta l$, that is $S^\prime(l)(\delta l)=(\delta\varphi,\delta d)\in V\times X$ solves
\begin{equation} \label{eq:derivativectslinear}
B((\delta\varphi,\delta d),(\psi,\lambda))=(\delta l,\psi)_{I\times\Omega} \quad \forall (\psi,\lambda)\in V\times X.
\end{equation}
In particular, $S^\prime$ is independent of $l$. \\
Note, that due to the convexity of our problem the first order necessary optimality condition is also sufficient.  Next, we define the adjoint state $(z,p)\in V\times X$ as the solution of the variational problem 
\begin{equation} \label{eq:adjointvariationalform}
B((\psi,\lambda),(z,p))=(\varphi-\varphi_d,\psi)_{I\times\Omega}+(d-d_d,\lambda)_{I\times\Omega} \quad \forall (\psi,\lambda)\in V\times X
\end{equation}
with $(\varphi,d)\in V\times X$ being the solution of the forward problem. The existence of a unique adjoint state can be proven with exactly the same arguments used to show existence of a unique solution of the primal problem. The ODE running backwards in time can be transformed via $\rho(t)=T-t$ into an ODE running forward in time with initial value $\tilde{p}(0)=0$. 
Using the adjoint equation we can reformulate the first order necessary optimality condition as
\begin{equation}
\label{eq:fonlineardual}
j^\prime(\overline{l})(\delta l)=(\alpha_l\overline{l}+\overline{z},\delta l)_{I\times\Omega}=0 \qquad\forall \delta l\in\Lzweizwei.
\end{equation}
In particular, this means that $\ovl=-\frac{1}{\alpha_l}\ovz\in V$. \\
Before we proceed to the semidiscretized level we will first have a look at the temporal regularity of the optimal state $\ovv$. Recall, that we had to assume $\varphi\in\Heinszwei$ in the last section. We will now prove that the optimal state $\ovv$ indeed possesses this regularity if the desired states are more regular in time. 
The proof of the next theorem relies on a result for absolute continuity of a function $f\in\Lzweizwei$.
\begin{lemma} \label{lem:absolutecontinuity}
Let $g\colon[\I]\to\Lzwo$ be absolutely continuous on $[0,T]$. If for a function $f\in\Lzweizwei$
\[\Vert f(t_1)-f(t_2)\Vert\leq C\Vert g(t_1)-g(t_2)\Vert \qquad \text{f.a.a } t_1,t_2\in(\I)\]
then $f$ is absolutely continuous on $[\I]$ as well. 
\end{lemma}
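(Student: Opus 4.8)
The plan is to proceed in two stages. First I would manufacture, out of the equivalence class $f\in\Lzweizwei$, a genuinely continuous representative $\tilde f\colon[\I]\to\Lzwo$ that satisfies the estimate $\Vert\tilde f(t_1)-\tilde f(t_2)\Vert\leq C\Vert g(t_1)-g(t_2)\Vert$ at \emph{every} pair of time points $t_1,t_2\in[\I]$, not merely for almost all pairs. Once such a representative is available, absolute continuity of $\tilde f$ would be immediate from that of $g$: given $\varepsilon>0$, absolute continuity of $g$ furnishes $\delta>0$ such that every finite family of pairwise disjoint subintervals $(a_i,b_i)\subset(\I)$ with $\sum_i(b_i-a_i)<\delta$ satisfies $\sum_i\Vert g(b_i)-g(a_i)\Vert<\varepsilon/C$, and then the pointwise bound yields $\sum_i\Vert\tilde f(b_i)-\tilde f(a_i)\Vert\leq C\sum_i\Vert g(b_i)-g(a_i)\Vert<\varepsilon$, which is the definition of absolute continuity of $\tilde f$ on $[\I]$.

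For the construction in the first stage I would argue as follows. Absolute continuity of $g$ gives in particular that $g$ is continuous, hence uniformly continuous and bounded, on $[\I]$. Fix any representative of $f$, still denoted $f$, and let $Z\subset(\I)^2$ be the (without loss of generality symmetric) null set outside of which $\Vert f(t_1)-f(t_2)\Vert\leq C\Vert g(t_1)-g(t_2)\Vert$ holds. By Fubini's theorem the set $N$ of those $t\in(\I)$ for which the slice $\{s\in(\I):(t,s)\in Z\}$ has positive measure is itself a null set, so $E:=(\I)\setminus N$ is co-null, hence dense in $[\I]$. For $t_1,t_2\in E$ the set of intermediate points $s$ with $(t_1,s)\notin Z$ and $(s,t_2)\notin Z$ is co-null, in particular dense near $t_1$; letting $s\to t_1$ through such points in the triangle inequality $\Vert f(t_1)-f(t_2)\Vert\leq C\Vert g(t_1)-g(s)\Vert+C\Vert g(s)-g(t_2)\Vert$ and using the continuity of $g$ gives $\Vert f(t_1)-f(t_2)\Vert\leq C\Vert g(t_1)-g(t_2)\Vert$ for all $t_1,t_2\in E$. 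Since $g|_E$ is uniformly continuous, so is $f|_E$, and $f|_E$ therefore extends uniquely to a continuous map $\tilde f$ on $\overline{E}=[\I]$; passing to the limit keeps the estimate valid for all $t_1,t_2\in[\I]$, and $\tilde f=f$ almost everywhere because $E$ is co-null, so $\tilde f$ is the desired representative.

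I expect the first stage to be the only genuine obstacle. The hypothesis controls $f$ only pairwise for almost every $(t_1,t_2)$, whereas the definition of absolute continuity requires evaluating $f$ at arbitrarily prescribed endpoints, so one must first upgrade the almost-everywhere bound to an everywhere-valid modulus of continuity; the Fubini slicing together with the intermediate-point device — which crucially exploits the continuity of $g$ — is exactly what achieves this, after which the second stage is routine. If one reads the hypothesis in the stronger sense that the estimate holds for all $t_1,t_2$ outside a single null set, the Fubini step becomes unnecessary and one passes directly to the uniform-continuity extension.
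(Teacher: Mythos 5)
Your proof is correct. The paper itself offers no argument here --- its proof reads only ``The assertion can be proven by standard arguments'' --- and what you have written out is precisely that standard argument, supplied in full. The one genuinely delicate point, upgrading the bound from almost every pair $(t_1,t_2)$ to every pair, is handled correctly: the Fubini slicing produces a co-null set $E$ of ``good'' times, the intermediate-point device (sending $s\to t_1$ through points for which both $(t_1,s)$ and $(s,t_2)$ lie outside the exceptional set, and using continuity of $g$ to kill the term $C\Vert g(t_1)-g(s)\Vert$) yields the estimate for all $t_1,t_2\in E$, and the uniform-continuity extension to $\overline{E}=[0,T]$ produces the continuous representative, after which the $\varepsilon$--$\delta$ transfer of absolute continuity from $g$ to $\tilde f$ is immediate. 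No gaps.
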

\begin{proof}
The assertion can be proven by standard arguments. 
\end{proof}
\begin{theorem}
Let $\varphi_d,d_d\in\Heinszwei$ be two given desired states and let $\ovl\in\Lzweizwei$ be the optimal control for the problem \eqref{eq:reducedoptimalcontrolprob} with associated optimal state $(\ovv,\ovd)\in V\times X$ and adjoint state $(\ovz,\ovp)\in V\times X$ . Then we have the improved regularity
\[\ovv,\ovz,\ovl\in\Heinszwei\]
\end{theorem}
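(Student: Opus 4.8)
The key observation is that the optimal control satisfies $\ovl=-\frac{1}{\alpha_l}\ovz$, so the temporal regularity of $\ovl$ is exactly that of the adjoint state $\ovz$. The adjoint state in turn is the first component of the solution $(\ovz,\ovp)$ of the backward variational problem \eqref{eq:adjointvariationalform} with right-hand sides $\ovv-\varphi_d$ and $\ovd-d_d$. Therefore the whole argument reduces to showing: if the data entering the PDE--ODE system (or its adjoint) is in $\Heinszwei$, then the ``nonlocal'' state component (the one solving the elliptic PDE pointwise in time) inherits $H^1$-in-time regularity. Once this is established, one closes the loop: $\ovv\in\Heinszwei$ gives $\ovv-\varphi_d\in\Heinszwei$, which feeds the adjoint system and yields $\ovz\in\Heinszwei$, and hence $\ovl\in\Heinszwei$.

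\textbf{Step 1: Time regularity of the local state $\ovd$.} First I would note that from Proposition~\ref{prop:solvweakformcont} we already know $\ovd\in\Heinszwei$, and if $d_0\in\Hzwei$ also $\ovd\in H^1(\I;\Hzwei)$ (here $d_0$ is fixed, but the relevant regularity of $\ovd$ that we need is just $\partial_t\ovd\in\Lzweizwei$ together with a difference quotient estimate). The ODE $\partial_t\ovd(t)=-\frac{\beta}{\delta}(\ovd(t)-\ovv(t))$ shows that $\partial_t\ovd$ is as regular in time as $\ovd-\ovv$; bootstrapping, once we know $\ovv$ is absolutely continuous in time with $\partial_t\ovv\in\Lzweizwei$, the same holds for $\ovd$.

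\textbf{Step 2: Time regularity of the nonlocal state $\ovv$.} This is the heart of the matter. I would use $\ovv(t)=\Phi(\ovl(t),\ovd(t))$ where $\Phi$ is the elliptic solution operator from \eqref{eq:weakvarphicont}. By the linearity and Lipschitz continuity of $\Phi$ (with constant $1$ in the $d$-argument, and a corresponding bound $\tfrac{1}{\beta}$ or so in the $l$-argument, cf.\ \eqref{eq:abschvarphicont}), one gets for a.a.\ $t_1,t_2$
\[
\Vert\ovv(t_1)-\ovv(t_2)\Vert_{\Heinsnull}\leq C\big\{\Vert\ovl(t_1)-\ovl(t_2)\Vert+\Vert\ovd(t_1)-\ovd(t_2)\Vert\big\}.
\]
Since $\ovl=-\frac{1}{\alpha_l}\ovz$ and $\ovd$ is absolutely continuous (Step 1), it remains to control $\ovz$ in time. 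This is where Lemma~\ref{lem:absolutecontinuity} enters: I would apply it with $f=\ovz$ (or $f=\ovv$) and $g$ built from $\ovd$, $\ovp$, and the data $\varphi_d,d_d$; the backward equation \eqref{eq:adjointvariationalform} expresses $\ovz(t)$ again through an elliptic solve with right-hand side depending on $\ovp(t)$, $\ovv(t)-\varphi_d(t)$, so a Lipschitz-in-time estimate for $\ovz$ follows from those of $\ovp$, $\ovv$, and $\varphi_d$. One argues by a fixed-point/circular-dependency unwinding: collect the three difference-quotient inequalities (for $\ovv$, $\ovd$ or $\ovp$, and $\ovz$) and observe that the coupled system of estimates is closed, each quantity being bounded by the $L^2$-in-time difference quotients of the data $\varphi_d,d_d\in\Heinszwei$ plus lower-order terms absorbable by a Gr\"onwall-type argument in time. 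This yields absolute continuity and $\partial_t\ovv,\partial_t\ovz\in\Lzweizwei$, i.e.\ $\ovv,\ovz\in\Heinszwei$, and then $\ovl\in\Heinszwei$ as well.

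\textbf{Main obstacle.} I expect the delicate point to be the circular coupling: $\ovv$ depends on $\ovl=-\tfrac1{\alpha_l}\ovz$, while $\ovz$ depends (through the adjoint) on $\ovv$. One must set up the difference-quotient estimates on a small time interval (or use the exponential-in-time structure coming from the $-\tfrac{\beta}{\delta}$ contraction in the ODE and its backward analogue) so that the coupled inequalities can be closed without a circular loss of constants. Concretely, I would first establish that $\ovd$ and $\ovp$ are absolutely continuous in time directly from their ODE structure (Step 1 and its backward counterpart), reducing everything to a single scalar Gr\"onwall inequality for $t\mapsto\Vert\ovv(t_1)-\ovv(t_2)\Vert$ in terms of $\Vert g(t_1)-g(t_2)\Vert$ with $g$ assembled from the absolutely continuous data; Lemma~\ref{lem:absolutecontinuity} then delivers the claim. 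The remaining verifications — measurability of the difference quotients, passage to the limit to identify $\partial_t\ovv$ — are routine.
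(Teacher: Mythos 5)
You correctly identify the overall architecture the paper uses: work with the optimality system pointwise in time, note that $\ovd,\ovp\in\Heinszwei$ is already available from the ODE structure (Proposition \ref{prop:solvweakformcont} and its backward analogue), derive a difference estimate of the form $\Vert f(t_1)-f(t_2)\Vert\leq C\Vert g(t_1)-g(t_2)\Vert$, and invoke Lemma \ref{lem:absolutecontinuity}. However, the one step you flag as the ``main obstacle'' --- closing the circular coupling $\ovv\to\ovz\to\ovl\to\ovv$ --- is exactly the step you do not resolve, and the mechanisms you suggest cannot resolve it. Chaining the Lipschitz estimates of the two elliptic solves gives
\[
\Vert\ovv(t_1)-\ovv(t_2)\Vert\leq \tfrac{1}{\beta\alpha_l}\Vert\ovz(t_1)-\ovz(t_2)\Vert+\dots\leq \tfrac{1}{\beta^2\alpha_l}\Vert\ovv(t_1)-\ovv(t_2)\Vert+\dots,
\]
and the factor $\tfrac{1}{\beta^2\alpha_l}$ is not less than one for small regularization parameter $\alpha_l$, so absorption fails. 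A Gr\"onwall or exponential-in-time argument is of no help here, because the problematic loop runs through the two \emph{elliptic} equations and the gradient equation at a single time instant; there is no time integration in it to generate a small factor. (The only time-evolutionary couplings are through $\ovd$ and $\ovp$, and those are already harmless.)

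The paper's resolution is a duality cancellation rather than a contraction argument: write the forward and adjoint elliptic equations for the differences at $t_1,t_2$, substitute $\ovz=-\alpha_l\ovl$ into the adjoint one, then test the forward equation with $\alpha_l(\ovl(t_1)-\ovl(t_2))$ and the adjoint equation with $\ovv(t_1)-\ovv(t_2)$ and \emph{add}. The terms $\alpha(\nabla\cdot,\nabla\cdot)$ and $\beta(\cdot,\cdot)$ appear with opposite signs and cancel exactly, leaving
\[
0=\alpha_l\Vert\ovl(t_1)-\ovl(t_2)\Vert^2+\Vert\ovv(t_1)-\ovv(t_2)\Vert^2+\text{cross terms in }\ovd,\ovp,\varphi_d,
\]
which after Young's inequality bounds $\Vert\ovl(t_1)-\ovl(t_2)\Vert^2+\Vert\ovv(t_1)-\ovv(t_2)\Vert^2$ by $\Vert\ovd(t_1)-\ovd(t_2)\Vert^2+\Vert\ovp(t_1)-\ovp(t_2)\Vert^2+\Vert\varphi_d(t_1)-\varphi_d(t_2)\Vert^2$ for \emph{every} $\alpha_l>0$, with no circularity. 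This is the missing idea; without it your Step 2 does not close. (The same device is needed again in the second half of the argument, where the weak derivatives are identified via $\partial_t\ovv=\Phi(\partial_t\ovl,\partial_t\ovd)$ etc.\ and the analogous estimate with derivatives upgrades $W^{1,1}$ to $H^1$ --- a part you dismiss as routine but which relies on the same cancellation.)
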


\begin{proof}
At first we will show the existence of weak temporal derivatives for $\ovl,\ovv$ and $\ovz$. According to \cite{WM11} a function belongs to $W^{1,1}(\I;\Lzwo)$ (and thus is differentiable almost everywhere) if and only if it is absolutely continuous w.r.t. time.  
We have a look at the optimality system \eqref{eq:weakformcont},\eqref{eq:adjointvariationalform} and \eqref{eq:fonlineardual} pointwisely in time, that is
\begin{align*}
\alpha(\nabla \ovv(t),\nabla \psi)+\beta(\ovv(t),\psi)-\beta(\ovd(t),\psi)+&(\partial_t \ovd(t),\lambda)+\frac{\beta}{\delta}(\ovd(t)-\ovv(t)),\lambda)=(\ovl(t),\psi) \\
\alpha(\nabla \ovz(t),\nabla \omega)+\beta(\ovz(t), \omega-v)-\frac{\beta}{\delta}(\ovp(t),& \omega-v)-(\partial_t \ovp(t),v)\\
&=(\ovv(t)-\varphi_d(t),\omega)+(\ovd(t)-d_d(t),v) \\
(\ovl(t)+\frac{1}{\alpha_l}\ovz(t),\delta l)&=0
\end{align*}
holds true for $\psi,\omega\in\Heinsnull$, $\lambda,v,\delta l\in\Lzwo$ and for almost all $t\in[\I]$.   
At first, according to the third equation we have $\ovz(t)=-\alpha_l \ovl(t)$ and are thus allowed to insert this into the second equation. At the same time we choose $\lambda=0$ and $v=0$ as we only work with the elliptic equations. If we consider the system for two different time points $t_1$ and $t_2$ (assuming that all functions exist for these two time points) and subtract the equations which belong together we obtain
\[\alpha(\nabla (\ovv(t_1)-\ovv(t_2)),\nabla \psi)+\beta(\ovv(t_1)-\ovv(t_2),\psi)-\beta(\ovd(t_1)-\ovd(t_2),\psi)=(\ovl(t_1)-\ovl(t_2),\psi)\]
\begin{align*}
-\alpha_l\alpha(\nabla (\ovl(t_1)-\ovl(t_2)),\nabla \omega)&-\alpha_l\beta(\ovl(t_1)-\ovl(t_2),\omega)\\
&=(\ovv(t_1)-\ovv(t_2)-(\varphi_d(t_1)-\varphi_d(t_2)),\omega)+\frac{\beta}{\delta}(p(t_1)-p(t_2),\omega)
\end{align*}
Testing the forward problem with $\psi=\alpha_l(\ovl(t_1)-\ovl(t_2))$, the adjoint equation with $w=\ovv(t_1)-\ovv(t_2)$ and addition of both equations then yields
\begin{align*}
0&=\alpha_l\Vert \ovl(t_1)-\ovl(t_2)\Vert^2 + \Vert \ovv(t_1)-\ovv(t_2)\Vert^2+\alpha_l\beta(\ovd(t_1)-\ovd(t_2),\ovl(t_1)-\ovl(t_2)) \\
&+\frac{\beta}{\delta}(\ovp(t_1)-\ovp(t_2),\ovv(t_1)-\ovv(t_2))-(\varphi_d(t_1)-\varphi_d(t_2),\ovv(t_1)-\ovv(t_2)).
\end{align*}
Next, with Cauchy-Schwarz' inequality and Young's inequality we obtain
\begin{align*}
\alpha_l\Vert \ovl(t_1)-\ovl(t_2)\Vert^2 +\Vert \ovv(t_1)-\ovv(t_2)\Vert^2 \leq \alpha_l\beta\frac{1}{2\varepsilon_1}\Vert \ovd(t_1)-\ovd(t_2)\Vert^2 + \frac{1}{2}\alpha_l\beta\varepsilon_1\Vert \ovl(t_1)-\ovl(t_2)\Vert^2 \\
+ \frac{\beta}{\delta}\frac{1}{2\varepsilon_2}\Vert\ovp(t_1)-\ovp(t_2)\Vert^2 + \frac{\beta}{\delta}\frac{\varepsilon_2}{2}\Vert \ovv(t_1)-\ovv(t_2)\Vert^2 \\
+\frac{1}{2\varepsilon_3}\Vert \varphi_d(t_1)-\varphi_d(t_2)\Vert^2+ \frac{\varepsilon_3}{2}\Vert \ovv(t_1)-\ovv(t_2)\Vert^2.
\end{align*}
We choose $\varepsilon_1=\frac{1}{\beta},\varepsilon_2=\frac{\delta}{2\beta},\varepsilon_3=1$ such that $\alpha_l-\frac{1}{2}\alpha_l\beta\varepsilon_1=\frac{\alpha_l}{2}>0, 1-\frac{\beta}{\delta}\frac{\varepsilon_2}{2}-\frac{\varepsilon_3}{2}=\frac{1}{4}>0$. Finally, we obtain
\begin{equation}
\Vert \ovl(t_1)-\ovl(t_2)\Vert^2 +\Vert \ovv(t_1)-\ovv(t_2)\Vert^2 \leq C\{\Vert \ovd(t_1)-\ovd(t_2)\Vert^2+\Vert\ovp(t_1)-\ovp(t_2)\Vert^2+\Vert \varphi_d(t_1)-\varphi_d(t_2)\Vert^2\}
\end{equation}
with a constant $C>0$ independent of time. This inequality holds true for almost all $t_1,t_2\in [\I]$ with the right-hand side existing for all $t_1,t_2\in[\I]$.
Then, the absolute continuity of $\ovd,\ovp$ and $\varphi_d$ gives the absolute continuity of $\ovv$ and $\ovl$ as thus also of $\ovz$ according to Lemma \ref{lem:absolutecontinuity}. This means, the three functions are almost everywhere differentiable w.r.t. time and we have $\ovv,\ovz,\ovl\in W^{1,1}(\I;\Lzwo)$. In particular, we have the existence of weak temporal derivatives which at least belong to $L^1(\I;\Lzwo)$. \\
In the second part of the proof, we will show, that these weak temporal derivatives actually belong to $\Lzweizwei$. Since a function $y\in W^{1,1}(\I;\Lzwo)$ is differentiable almost everywhere, we have that 
\[\partial_t y(t)=\lim_{h\searrow 0} \frac{y(t+h)-y(t)}{h}\]
has to hold true for almost all $t\in[0,T]$ (see \cite{WM11}, Thm. 3.1.40.). We can use this property to directly compute the temporal derivatives of $\ovl,\ovv$ and $\ovz$ by employing the definition of $\varphi(t)=\Phi(d(t),l(t))$ and $z(t)=\Phi(\frac{1}{\delta}p(t),\varphi(t)-\varphi_d(t))$ and $l(t)=-\frac{1}{\alpha_l}z(t)$. One obtains that the temporal derivatives of $y=\ovv,\ovz,\ovl$ are given as $\partial_t\ovv(t)=\Phi(\partial_t \ovd(t),\partial_t \ovl(t)), \partial_t \ovz(t)=\Phi(\frac{1}{\delta}\partial_t\ovp(t),\partial_t(\ovv(t)-\varphi_d(t)))$ and $\partial_t \ovl(t)=-\frac{1}{\alpha_l}\partial_t \ovz(t)$. As $\Phi$ maps $\Lzwo\times\Lzwo$ to $\Heinsnull\cap\Hzwei$ we have $\partial_t\ovv(t), \partial_t\ovz(t),\partial_t\ovl(t)\in\Heinsnull$ and are thus allowed to choose them as test functions. An estimation analogously to the above one then yields
\[\Vert\partial_t\ovl(t)\Vert^2+\Vert \partial_t\ovv(t)\Vert^2\leq C\left(\Vert \partial_t\ovd(t)\Vert^2+\Vert \partial_t\ovp(t)\Vert^2 + \Vert\partial_t\varphi_d(t)\Vert^2\right). \]
Since all terms on the right-hand side belong to $L^1(\I)$ the same has to hold true for the left-hand side. Thus, we have $\partial_t\ovv,\partial_t\ovl\in\Lzweizwei$ and consequently $\partial_t\ovz\in\Lzweizwei$. This finishes the proof.
\end{proof}
On the semidiscretized level, that is the control is not discretized yet but the states are already discretized in time, we consider the optimal control problem
\begin{equation}
\label{eq:semidiscreteoptimalcontrolprob}
\min j_\tau(l)=J(S_\tau(l),l) \text{ subject to } l\in\Lzweizwei
\end{equation}
with $S_\tau\colon \Lzweizwei \to \Vtau\times\Xtau$ being the solution operator of the semidiscretized state equation \eqref{eq:semidiscretestatelinear}, that is $S_\tau(l)=(\vt,\dt)$. 
\begin{lemma} \label{lem:existenceoptcontrolsemidiscretelevel}
The optimal control problem \eqref{eq:semidiscreteoptimalcontrolprob} admits for $\alpha_l>0$ a unique solution $\overline{l}_\tau\in\Lzweizwei$ with corresponding state $(\overline{\varphi}_\tau,\overline{d}_\tau)\in\Vtau\times\Xtau$.
\end{lemma}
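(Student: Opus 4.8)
The plan is to argue exactly as on the continuous level in Theorem~\ref{thm:existenceoptimalcontrol}, i.e.\ to apply the direct method of the calculus of variations to the reduced functional $j_\tau$, the only genuinely new ingredient being the well-posedness of the semidiscrete state equation.

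First I would record that the semidiscrete control-to-state operator $S_\tau\colon\Lzweizwei\to\Vtau\times\Xtau$, $S_\tau(l)=(\vt,\dt)$, is well defined by Proposition~\ref{prop: semidiscretestateexistence}. It is moreover \emph{affine linear}: writing $S_\tau(l)=S_\tau(0)+\widehat S_\tau(l)$, where $S_\tau(0)$ carries the contribution of the initial datum $d_0$ and $\widehat S_\tau(l)$ solves \eqref{eq:semidiscretestatelinear} with vanishing initial datum, the linearity of $\widehat S_\tau$ follows from the linearity of the semidiscrete bilinear form $\B$ together with uniqueness in Proposition~\ref{prop: semidiscretestateexistence}. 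It is also continuous, since Corollary~\ref{col:stabestimatedtnonlinear} and Theorem~\ref{thm:stabeststatetimethm} provide
\[
\|\vt\|_{I\times\Omega}+\|\dt\|_{I\times\Omega}\le C\{\|l\|_{I\times\Omega}+\|d_0\|\},
\]
so that $\widehat S_\tau$ is a bounded linear operator between the relevant $L^2$-spaces.

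Consequently, $j_\tau(l)=\frac12\|\vt-\varphi_d\|_{I\times\Omega}^2+\frac12\|\dt-d_d\|_{I\times\Omega}^2+\frac{\alpha_l}{2}\|l\|_{I\times\Omega}^2$ is, as a function of $l$, the sum of a convex quadratic term (the tracking part, being the square of an affine-linear map of $l$) and the strictly convex term $\frac{\alpha_l}{2}\|l\|_{I\times\Omega}^2$; hence $j_\tau$ is continuous and strictly convex on the Hilbert space $\Lzweizwei$, and it is coercive because $j_\tau(l)\ge\frac{\alpha_l}{2}\|l\|_{I\times\Omega}^2$. The remaining argument is standard: a minimizing sequence $(l_n)$ is bounded by coercivity, a weakly convergent subsequence $l_n\rightharpoonup\ovlt$ in $\Lzweizwei$ exists, and weak lower semicontinuity of the convex continuous functional $j_\tau$ shows that $\ovlt$ is a minimizer; strict convexity gives uniqueness. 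Setting $(\ovvt,\ovdt):=S_\tau(\ovlt)$ yields the associated state, unique again by Proposition~\ref{prop: semidiscretestateexistence}.

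I do not expect a real obstacle here; the only thing that needs checking beyond the continuous case is that $S_\tau$ is affine linear and bounded, and this is immediate from Proposition~\ref{prop: semidiscretestateexistence} and the stability estimate of Theorem~\ref{thm:stabeststatetimethm}. Alternatively one may simply cite \cite{FT09}, Thm.~2.16, as in the proof of Theorem~\ref{thm:existenceoptimalcontrol}, since the abstract structure (quadratic tracking functional, affine-linear and bounded state map, positive Tikhonov term on a Hilbert space) is unchanged.
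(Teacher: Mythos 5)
Your argument is correct and is essentially the route the paper takes: the paper's proof consists of the single remark that the same arguments as on the continuous level apply, and the continuous-level proof in Theorem~\ref{thm:existenceoptimalcontrol} simply cites \cite{FT09}, Thm.~2.16, whose hypotheses (affine-linear bounded state map, quadratic tracking functional, positive Tikhonov term on a Hilbert space) are exactly what you verify by hand via Proposition~\ref{prop: semidiscretestateexistence} and the stability estimates. Your write-up just makes explicit the standard direct-method argument that the paper leaves implicit.
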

\begin{proof}
The assertion can be proven with exactly the same arguments as in the proof on the continuous level. 
\end{proof}
On this level of discretization the first order optimality condition reads
\begin{equation} \label{eq:fonlinearprimalsemidiscrete}
j_\tau^\prime(\overline{l}_\tau)(\delta l)=0 \qquad \forall \delta l\in\Lzweizwei.
\end{equation}
Using the semidiscrete adjoint state $(\ovzt,\ovpt)\in\Vtau\times\Xtau$ given as the solution of the semidiscrete adjoint equation
\begin{equation}
\label{eq:semidiscreteadjointequation}
\B((\psi,\lambda),(\ovzt,\ovpt))=(\overline{\varphi}_\tau-\varphi_d,\psi)_{I\times\Omega}+(\overline{d}_\tau-d_d,\lambda)_{I\times\Omega} \qquad \forall (\psi,\lambda)\in\Vtau\times\Xtau
\end{equation}
the first order necessary optimality condition can equivalently be expressed as
\begin{equation}
\label{eq:fonlineardualsemidiscrete}
(\alpha_l \overline{l}_\tau+\overline{z}_\tau,\delta l)_{I\times\Omega}=0 \qquad \forall \delta l\in\Lzweizwei.
\end{equation}
Note, that the unique solvability of \eqref{eq:semidiscreteadjointequation} is guaranteed by Lemma \ref{cor:stabestdualtime} with right-hand sides $g_1=\overline{\varphi}_\tau-\varphi_d,g_2=\overline{d}_\tau-d_d\in\Lzweizwei$ and terminal condition $p_T=0$. Furthermore, we have $\ovlt=-\frac{1}{\alpha_l}\ovzt\in\Vtau$.

The optimization problem governed by the fully discretized PDE-ODE-System again with a continuous control reads
\begin{equation}
\label{eq:fullydiscreteoptconprob}
\min j_{\tau h}(l)=J(S_{\tau h}(l),l) \text{ subject to } l\in\Lzweizwei
\end{equation}
with $S_{\tau h}:\Lzweizwei\to \Vth\times\Xth$, $S_{\tau h}(l)=(\vth,\dth)$ being the solution operator of the fully discretized state equation \eqref{eq:fullydiscretestate}. The existence of a unique solution follows directly from the unique solvability on the semidiscrete level. Similarly, the first order necessary optimality condition
\begin{equation}
\label{eq:fonlinearprimaldiscrete}
j_{\tau h}^\prime(\overline{l}_{\tau h})(\delta l)=0 \qquad \forall \delta l\in\Lzweizwei
\end{equation}
can equivalently be expressed as
\begin{equation}
\label{eq:fonlineardualdiscrete}
(\alpha_l \overline{l}_{\tau h}+\overline{z}_{\tau h},\delta l)_{I\times\Omega}=0 \qquad \forall \delta l\in\Lzweizwei
\end{equation}
with $(\ovzth,\ovpth)\in\Vth\times\Xth$ being the solution of the fully discrete adjoint equation
\begin{equation}
\label{eq:fullydiscreteadjointequation}
\B((\psi,\lambda),(\overline{z}_{\tau h},\overline{p}_{\tau h}))=(\overline{\varphi}_{\tau h}-\varphi_d,\psi)_{I\times\Omega}+(\overline{d}_{\tau h}-d_d,\lambda)_{I\times\Omega}\qquad \forall (\psi,\lambda)\in\Vth\times\Xth.
\end{equation}
A direct consequence of \eqref{eq:fonlineardualdiscrete} is $\ovlth\in\Vth$. 
\subsection{A priori error estimates for the optimal control}
Since $\ovl\in V$ we intend to discretize the controls corresponding to the states, that is we apply a dG(0)cG(1) discretization. Thus, we choose the finite dimensional subspace $L_\sigma\subset \Lzweizwei$ as
\[L_\sigma=\{l\in L^2(\I;\Vh): l_{\vert I_m}\in\mathbb{P}_0(I_m;\Vh),1\leq m\leq M\}=\Vth.\]
Then, the fully discretized optimal control problem reads as follows
\begin{equation} \label{eq:optconprobdiscretecontrol}
\min j_{\tau h}(l)=J(S_{\tau h}(l),l) \text{ subject to } l\in L_\sigma.
\end{equation}
We want to derive a priori error estimates for the solution of this control problem. The solution will be denoted with $\ovlsigma$. Note, that we can prove $\ovlsigma=\ovlth$, a phenomenon first discussed in \cite{MH05}. Indeed, we have due to the first order optimality condition on the space-time discretized level
\[\ovlth=\frac{1}{\alpha_l}\ovzth\in\Vth=L_\sigma.\]
Thus, $\ovlth\in L_\sigma$ fulfills the necessary optimality condition on the completely discretized level for all directions $\delta l\in L_\sigma\subset \Lzweizwei$. Due to the uniqueness of the optimal control we have $\ovlsigma=\ovlth$.  Therefore, it suffices to bound the error $\Vert \ovl-\ovlth\Vert_{I\times\Omega}$ which we will do in the following. 

We start with an estimate for the error between the adjoint states on the continuous and on the space-time discretized level. We can employ the same arguments as in the previous section and therefore will only sketch the essential steps. 
\begin{lemma} \label{lem:errorestimateadjoint}
Let $(\ovz,\ovp)\in V\times X$ be the solution of the adjoint state system \eqref{eq:adjointvariationalform} on the continuous level and $(\ovzth,\ovpth)\in\Vth\times\Xth$ be the solution of the adjoint state system \eqref{eq:fullydiscreteadjointequation} on the space-time discretized level corresponding to the optimal control $\ovl\in V$ with continuous state $(\ovv,\ovd)\in V\times X$, semidiscrete state $(\ovvt,\ovdt)\in\Vtau\times\Xtau$ and space-time discrete state $(\ovvth,\ovdth)\in\Vth\times\Xth$. Then we have the following error estimate
\begin{align*}
\Vert \ovz-\ovzth\Vert_{I\times\Omega}+\Vert \ovp-\ovpth\Vert_{I\times\Omega}\leq &C\tau\{\Vert \partial_t \ovv\Vert_{I\times\Omega}+\Vert \partial_t \ovd\Vert_{I\times\Omega}+\Vert \partial_t \ovz\Vert_{I\times\Omega}+\Vert \partial_t \ovp\Vert_{I\times\Omega}\} \\
+Ch^2&\{\Vert\nabla^2\ovvt\Vert_{I\times\Omega}+\Vert\nabla^2\ovdt\Vert_{I\times\Omega}+\Vert\nabla^2
\ovzt\Vert_{I\times\Omega}+\Vert\nabla^2\ovpt\Vert_{I\times\Omega}\}.
\end{align*}
\end{lemma}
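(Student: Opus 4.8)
**The plan is to mimic the state-equation error analysis (Theorems~\ref{thm:temporalerrorestimate} and~\ref{thm:spacialerrorestimate}) applied to the adjoint system,** exploiting that the adjoint equation has exactly the same bilinear-form structure as the forward equation (with the roles of trial and test arguments swapped, cf.\ \eqref{eq:semidiscreteBdual}), so the stability results of Corollary~\ref{cor:stabestdualtime} and Corollary~\ref{cor:stabestimatesspacedual} apply. I would split the error by introducing an intermediate semidiscrete adjoint state. Concretely, let $(\ovzt,\ovpt)\in\Vtau\times\Xtau$ be the solution of the semidiscrete adjoint equation \eqref{eq:semidiscreteadjointequation} but with right-hand side $(\ovv-\varphi_d,\ovd-d_d)$ coming from the \emph{continuous} optimal state (not $(\ovvt,\ovdt)$), i.e.\ the genuine dG(0)-in-time approximation of $(\ovz,\ovp)$. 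Then write
\begin{align*}
\Vert\ovz-\ovzth\Vert_{I\times\Omega}+\Vert\ovp-\ovpth\Vert_{I\times\Omega}
&\le \Vert\ovz-\ovzt\Vert_{I\times\Omega}+\Vert\ovp-\ovpt\Vert_{I\times\Omega}\\
&\quad+\Vert\ovzt-\ovzth\Vert_{I\times\Omega}+\Vert\ovpt-\ovpth\Vert_{I\times\Omega}.
\end{align*}

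For the temporal part $\Vert\ovz-\ovzt\Vert+\Vert\ovp-\ovpt\Vert$ I would repeat the argument of Theorem~\ref{thm:temporalerrorestimate} verbatim for the adjoint system: split the error into interpolation/projection parts $\eta$ and discrete parts $\xi$ (using $\pi_\tau$ for $\ovp$ and $P_\tau$ for $\ovz$), establish the analogue of Lemma~\ref{lem:errortimefirstlemma} (the projection errors produce only lower-order terms of the form $\beta(\eta^{\ovp}_\tau,\cdot)$ and $\tfrac{\beta}{\delta}(\eta^{\ovp}_\tau,\cdot)$ because the derivative and jump terms for $\ovp$ vanish under $\pi_\tau$ and the $\nabla$-terms for $\ovz$ vanish under $P_\tau$), then run the duality argument of Lemma~\ref{lem:timeerrorvseta} using the stability estimate of Corollary~\ref{cor:stabestdualtime} (now applied to the \emph{primal-type} equation, which is fine since Theorem~\ref{thm:stabeststatetimethm} also covers that). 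This yields $\le C\tau\{\Vert\partial_t\ovz\Vert_{I\times\Omega}+\Vert\partial_t\ovp\Vert_{I\times\Omega}\}$, and the required regularity $\ovz\in\Heinszwei$, $\ovp\in\Heinszwei$ is exactly what the preceding regularity theorem provides (assuming $\varphi_d,d_d\in\Heinszwei$).

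For the spatial part $\Vert\ovzt-\ovzth\Vert+\Vert\ovpt-\ovpth\Vert$ I would argue as in Theorem~\ref{thm:spacialerrorestimate} via Lemma~\ref{lem:boundednesserrorspatial} and Lemma~\ref{lem:projectioninbilinearformlinear}, bounding the spatial discretization error of the adjoint by the $L^2$- and Ritz-projection errors of $\ovzt$ and $\ovpt$, which gives $\le Ch^2\{\Vert\nabla^2\ovzt\Vert_{I\times\Omega}+\Vert\nabla^2\ovpt\Vert_{I\times\Omega}\}$. \textbf{There is, however, one genuine subtlety:} the right-hand sides of the discrete adjoint equations \eqref{eq:semidiscreteadjointequation} and \eqref{eq:fullydiscreteadjointequation} involve the \emph{discrete} optimal states $(\ovvt,\ovdt)$ and $(\ovvth,\ovdth)$, not the continuous one. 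So one must first absorb the error $\Vert\ovv-\ovvt\Vert$, $\Vert\ovd-\ovdt\Vert$ (and the $\tau h$-analogues) through the data of the adjoint equation: by linearity, the difference between the adjoint with right-hand side $(\ovv-\varphi_d,\ovd-d_d)$ and the one with $(\ovvt-\varphi_d,\ovdt-d_d)$ solves a dual equation with right-hand side $(\ovv-\ovvt,\ovd-\ovdt)$, whose $L^2$-norm is controlled by Corollary~\ref{cor:stabestdualtime}/\ref{cor:stabestimatesspacedual} and then by Theorems~\ref{thm:temporalerrorestimate}/\ref{thm:spacialerrorestimate}; this contributes only further $C\tau\{\Vert\partial_t\ovv\Vert+\Vert\partial_t\ovd\Vert\}$ and $Ch^2\{\Vert\nabla^2\ovvt\Vert+\Vert\nabla^2\ovdt\Vert\}$ terms, exactly matching the claimed bound. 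Collecting the three pieces and using the triangle inequality gives the assertion; the main obstacle is bookkeeping this coupling cleanly (making sure the mixed $\tau h$-states feed back correctly and that no term of order lower than $\tau$ or $h^2$ is lost), rather than any new analytic difficulty.
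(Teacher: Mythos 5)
Your proposal is correct and follows essentially the same route as the paper: a duality argument with an auxiliary primal-type problem, the discrete stability estimates of Corollaries \ref{cor:stabestdualtime}/\ref{cor:stabestimatesspacedual}, the projection-error lemmas, and the already-proven primal error estimates to absorb the mismatch caused by the discretized states in the adjoint right-hand side. The only difference is organizational: the paper keeps the semidiscrete adjoint defined with the semidiscrete state data and lets the perturbation term $(\ovv-\ovvt,\tilde{\varphi}_\tau)_{I\times\Omega}+(\ovd-\ovdt,\tilde{d}_\tau)_{I\times\Omega}$ appear directly inside the duality identity, whereas you peel it off first via linearity and a separate stability step; both yield the same bound.
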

\begin{proof}
We cannot employ the arguments used in the previous section directly as we are lacking Galerkin orthogonality of the temporal and spatial errors. This is due to the discretization of the primal state variables on the right-hand side. Instead we have
\begin{equation} \label{eq:Galerkinorthotimeadjoint}
\B((\psi,\lambda),(\ezt,\ept))=(\ovv-\ovvt,\psi)_{I\times\Omega}+(\ovd-\ovdt,\lambda)_{I\times\Omega} \quad \forall (\psi,\lambda)\in \Vtau\times\Xtau
\end{equation}
for the temporal errors $\ezt=\ovz-\ovzt,\ept=\ovp-\ovpt$. Furthermore, 
\begin{equation} \label{eq:Galerkinorthospaceadjoint}
\B((\psi,\lambda),(\ezth,\epth))=(\ovvt-\ovvth,\psi)_{I\times\Omega}+(\ovdt-\ovdth,\lambda)_{I\times\Omega} \quad \forall (\psi,\lambda)\in \Vth\times\Xth
\end{equation}
holds true for the spatial errors $\ezth=\ovzt-\ovzth,\epth=\ovpt-\ovpth$.
We split the proof in several parts.
\begin{enumerate} 
\item First we derive an estimate for the temporal error. We will employ exactly the same arguments as for the temporal error for the primal variables and just use \eqref{eq:Galerkinorthotimeadjoint} whenever the temporal Galerkin orthogonality is used. \\
Thus, consider the solution $(\tilde{\varphi}_\tau,\tilde{d}_\tau)\in\Vtau\times\Xtau$ of the auxiliary variational problem
\[\B((\tilde{\varphi}_\tau,\tilde{d}_\tau),(\psi,\lambda))=(\ezt,\psi)_{I\times\Omega}+(\ept,\lambda)_{I\times\Omega} \quad \forall (\psi,\lambda)\in\Vtau\times\Xtau\]
which possesses a unique solution $(\tilde{\varphi}_\tau,\tilde{d}_\tau)\in\Vtau\times\Xtau$ according to Proposition \ref{prop: semidiscretestateexistence}.
With the usual notation  we have
\begin{align*}
\Vert\ezt\Vert^2_{I\times\Omega}+\Vert\ept\Vert^2_{I\times\Omega}&=\B((\tilde{\varphi}_\tau,\tilde{d}_\tau),(\xi^z_\tau,\xi^p_\tau))+(\ezt,\etazt)_{I\times\Omega}+(\ept,\etapt)_{I\times\Omega} \\
&=-\B((\tilde{\varphi}_\tau,\tilde{d}_\tau),(\etazt,\etapt)) + (\ovv-\ovvt,\tilde{\varphi}_\tau)_{I\times\Omega}+(\ovd-\ovdt,\tilde{d}_\tau)_{I\times\Omega} \\
&+(\ezt,\etazt)_{I\times\Omega}+(\ept,\etapt)_{I\times\Omega}.
\end{align*}
From here, we mimic the steps taken in the proof of Lemma \ref{lem:timeerrorvseta} and employ the stability estimates for primal solutions  which results in
\[\Vert\ezt\Vert_{I\times\Omega}+\Vert\ept\Vert_{I\times\Omega} \leq C\{\Vert\etazt\Vert_{I\times\Omega}+\Vert\etapt\Vert_{I\times\Omega}+\Vert \ovv-\ovvt\Vert_{I\times\Omega}+\Vert \ovd-\ovdt\Vert_{I\times\Omega}\}. \]
Thus, the already derived temporal errors estimates for the primal solutions and the usual projection error estimates yield
\[\Vert \ovz-\ovzt\Vert_{I\times\Omega}+\Vert \ovp-\ovpt\Vert_{I\times\Omega}\leq C \tau\{\Vert \partial_t \ovv\Vert_{I\times\Omega}+\Vert \partial_t \ovd\Vert_{I\times\Omega}+\Vert \partial_t \ovz\Vert_{I\times\Omega}+\Vert \partial_t \ovp\Vert_{I\times\Omega}\}.\]
\item For the error estimation of the spatial error we make use of the auxiliary problem
\begin{equation} \label{eq:auxproblemadjointspace}
\B((\tilde{\varphi}_{\tau h},\tilde{d}_{\tau h}),(\psi,\lambda))=(\ezth,\psi)_{I\times\Omega}+(\epth,\lambda)_{I\times\Omega} \quad \forall (\psi,\lambda)\in\Vth\times\Xth
\end{equation}
which possesses a unique solution $(\tilde{\varphi}_{\tau h},\tilde{d}_{\tau h})\in\Vth\times\Xth$. Then we have 
\begin{align*}
\Vert \ezth\Vert_{I\times\Omega}^2+\Vert \epth\Vert^2_{I\times\Omega}&=(\ezth,\etazh)_{I\times\Omega}+(\epth,\etaph)_{I\times\Omega}+\B((\tilde{\varphi}_{\tau h},\tilde{d}_{\tau h}),(\xizh,\xiph)) \\
&=(\ezth,\etazh)_{I\times\Omega}+(\epth,\etaph)_{I\times\Omega}-\B((\tilde{\varphi}_{\tau h},\tilde{d}_{\tau h}),(\etazh,\etaph)) \\
&+ (\ovvt-\ovvth,\tilde{\varphi}_{\tau h})_{I\times\Omega}+(\ovdt-\ovdth,\tilde{d}_{\tau h})_{I\times\Omega}.
\end{align*}
We can employ the same arguments as in the proofs of Lemmata \ref{lem:projectioninbilinearformlinear} and \ref{lem:boundednesserrorspatial} and arrive at
\begin{align*}
\Vert\ezth\Vert_{I\times\Omega}^2+\Vert \epth\Vert^2_{I\times\Omega}&\leq C(\Vert \etazh\Vert_{I\times\Omega}+\Vert\ovzt-\rho_h\ovzt\Vert_{I\times\Omega}
+\Vert\etaph\Vert_{I\times\Omega}\\
&+\Vert \ovvt-\ovvth\Vert_{I\times\Omega}+\Vert \ovdt-\ovdth\Vert_{I\times\Omega})(\Vert\ezth\Vert_{I\times\Omega}+\Vert \epth\Vert_{I\times\Omega}) \\
\end{align*}
The usual projection error estimates as well as the spatial error estimates for primal solutions then lead to the desired estimate for the spatial error
\[\Vert\ezth\Vert_{I\times\Omega}+\Vert \epth\Vert_{I\times\Omega} \leq Ch^2\{\Vert\nabla^2\ovvt\Vert_{I\times\Omega}+\Vert\nabla^2\ovdt\Vert_{I\times\Omega}
+\Vert\nabla^2\ovzt\Vert_{I\times\Omega}+\Vert\nabla^2\ovpt\Vert_{I\times\Omega}\}.\]
\end{enumerate}  
\end{proof}
We are now in the position to give an estimate for the optimal control:
\begin{theorem}
The error between the optimal control $\ovl\in\Lzweizwei$ of the optimal control problem \eqref{eq:reducedoptimalcontrolprob} and the solution $\ovlth\in L_\sigma$ of the fully discretized optimal control problem \eqref{eq:optconprobdiscretecontrol} can be estimated as
\begin{align*}
\Vert \ovl-\ovlth\Vert_{I\times\Omega}\leq& \frac{C}{\alpha_l}\tau\{\Vert \partial_t \ovv\Vert_{I\times\Omega}+\Vert \partial_t \ovd\Vert_{I\times\Omega}+\Vert \partial_t \ovz\Vert_{I\times\Omega}+\Vert \partial_t \ovp\Vert_{I\times\Omega}\} \\
&+\frac{C}{\alpha_l}h^2\{\Vert\nabla^2\ovvt\Vert_{I\times\Omega}+
\Vert\nabla^2\ovdt\Vert_{I\times\Omega}+\Vert\nabla^2\ovzt\Vert_{I\times\Omega}
+\Vert\nabla^2\ovpt\Vert_{I\times\Omega}\}
\end{align*}
with $\ovv,\ovd,\ovvt,\ovdt$ and $\ovz,\ovp,\ovzt,\ovpt$ being the optimal states and adjoint states corresponding to $\ovl$ on the continuous and on the semidiscrete level. The constant $C>0$ is independent of the mesh size $h$ as well as the temporal discretization parameter $\tau$. 
\end{theorem}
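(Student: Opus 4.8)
The plan is to reduce the control error to the discretization error of the adjoint state — which is already controlled by Lemma~\ref{lem:errorestimateadjoint} — and to exploit the quadratic, $\alpha_l$-coercive structure of the reduced cost functionals. First I would record that, since $S'$ is independent of $l$ and, by linearity of the discrete state equation \eqref{eq:fullydiscretestate}, so is $S_{\tau h}'$, both reduced functionals $j(l)=J(S(l),l)$ and $j_{\tau h}(l)=J(S_{\tau h}(l),l)$ are quadratic on $\Lzweizwei$ and satisfy $j_{\tau h}''(\delta l,\delta l)\geq\alpha_l\Vert\delta l\Vert_{I\times\Omega}^2$ for all $\delta l\in\Lzweizwei$. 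Introducing the fully discrete adjoint state associated with $S_{\tau h}(l)$ — exactly as in \eqref{eq:adjointvariationalform}--\eqref{eq:fonlineardual} on the continuous level — yields the representation $j_{\tau h}'(l)(\delta l)=(\alpha_l l+z_{\tau h}(l),\delta l)_{I\times\Omega}$, where $z_{\tau h}(\ovl)$ is precisely the discrete adjoint component $\ovzth$ appearing in Lemma~\ref{lem:errorestimateadjoint}.

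Next I would test the coercivity estimate with $\delta l=\ovl-\ovlth\in\Lzweizwei$ and use that $j_{\tau h}$ is quadratic to write
\[\alpha_l\Vert\ovl-\ovlth\Vert_{I\times\Omega}^2\leq j_{\tau h}''(\ovl-\ovlth,\ovl-\ovlth)=\bigl(j_{\tau h}'(\ovl)-j_{\tau h}'(\ovlth)\bigr)(\ovl-\ovlth).\]
Here the key point is that $\ovlth$ solves the variationally discretized problem \eqref{eq:optconprobdiscretecontrol}: by the identification $\ovlsigma=\ovlth$ the optimality condition \eqref{eq:fonlineardualdiscrete} holds for \emph{all} $\delta l\in\Lzweizwei$, so $j_{\tau h}'(\ovlth)(\ovl-\ovlth)=0$ even though $\ovl-\ovlth\notin L_\sigma$ in general. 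Using in addition the continuous optimality condition $j'(\ovl)(\ovl-\ovlth)=0$ from \eqref{eq:fonlineardual} and inserting the two adjoint representations of the derivatives, the common term $(\alpha_l\ovl,\ovl-\ovlth)_{I\times\Omega}$ drops out and only the difference of adjoints remains:
\[\alpha_l\Vert\ovl-\ovlth\Vert_{I\times\Omega}^2\leq\bigl(j_{\tau h}'(\ovl)-j'(\ovl)\bigr)(\ovl-\ovlth)=(\ovzth-\ovz,\ovl-\ovlth)_{I\times\Omega}\leq\Vert\ovz-\ovzth\Vert_{I\times\Omega}\Vert\ovl-\ovlth\Vert_{I\times\Omega}.\]
Dividing by $\Vert\ovl-\ovlth\Vert_{I\times\Omega}$ gives $\Vert\ovl-\ovlth\Vert_{I\times\Omega}\leq\frac{1}{\alpha_l}\Vert\ovz-\ovzth\Vert_{I\times\Omega}\leq\frac{1}{\alpha_l}(\Vert\ovz-\ovzth\Vert_{I\times\Omega}+\Vert\ovp-\ovpth\Vert_{I\times\Omega})$, and the assertion follows directly by inserting the bound of Lemma~\ref{lem:errorestimateadjoint}.

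The argument is essentially routine once Lemma~\ref{lem:errorestimateadjoint} is available; the only genuine subtlety is the bookkeeping of which adjoint appears. One must apply coercivity of $j_{\tau h}''$ (not $j''$) and subtract $j'(\ovl)$, so that the residual becomes $\ovzth-\ovz$, i.e.\ the discretization error of the adjoint \emph{at the continuous optimum} $\ovl$, which is exactly what Lemma~\ref{lem:errorestimateadjoint} estimates; proceeding with $j''$ instead would produce a term of the form $z(\ovlth)-\ovzth$, an adjoint error at the discrete optimum, for which no estimate has been prepared. A second point to keep in mind is that the test direction $\ovl-\ovlth$ lives in the full space $\Lzweizwei$, so both optimality conditions are genuinely needed over $\Lzweizwei$; for $\eqref{eq:fonlineardualdiscrete}$ this is guaranteed precisely by the variational-discretization identity $\ovlsigma=\ovlth$.
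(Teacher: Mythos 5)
Your proof is correct and follows essentially the same route as the paper: both arguments reduce the control error to the term $(\ovzth-\ovz,\ovl-\ovlth)_{I\times\Omega}$, with $\ovzth$ the discrete adjoint at the continuous optimum $\ovl$, and then invoke Lemma~\ref{lem:errorestimateadjoint} together with the variational-discretization identity $\ovlsigma=\ovlth$. The only cosmetic difference is that you absorb the nonpositive cross term into the coercivity of $j_{\tau h}''$, whereas the paper subtracts the two first-order conditions and verifies directly from the discrete state and adjoint equations that $(\ovzth-\ovzs,\ovlth-\ovl)_{I\times\Omega}=-\Vert\ovvs-\ovvth\Vert^2_{I\times\Omega}-\Vert\ovds-\ovdth\Vert^2_{I\times\Omega}\leq 0$.
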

\begin{proof}
The proof is based on ideas from \cite{MH05}. Let $\overline{z}=\overline{z}(\ovl)\in V$ be the first (continuous) adjoint state corresponding to the control $\ovl$ and let $\ovzs=\ovzth(\ovlth)\in\Vth$ be the first (discrete) adjoint state corresponding to the control $\ovlth$. Moreover, let $\ovzth=\ovzth(\ovl)\in\Vth$ be the first (discrete) state corresponding to the control $\ovl$. All other solutions are denoted by the same system. Due to the first order necessary optimality conditions \eqref{eq:fonlineardual} and \eqref{eq:fonlineardualdiscrete} we have
\begin{align*}
(\alpha_l\ovl+\ovz,\ovl-\ovlth)_{I\times\Omega}&=0 \\
(\alpha_l\ovlth+\ovzs,\ovl-\ovlth)_{I\times\Omega}&=0.
\end{align*}
Subtraction of both equations yields
\begin{align*}
\alpha_l\Vert\ovl-\ovlth\Vert^2_{I\times\Omega}&=(\ovz-\ovzs,\ovlth-\ovl)_{I\times\Omega} \\
&=(\ovz-\ovzth,\ovlth-\ovl)_{I\times\Omega}+(\ovzth-\ovzs,\ovlth-\ovl)_{I\times\Omega}.
\end{align*}
We have the following estimate for the second term due to the definition of the discrete state and adjoint equation
\begin{align*}
(\ovzth-\ovzs,\ovlth-\ovl)_{I\times\Omega}&=\B((\ovvs-\ovvth,\ovds-\ovdth),(\ovzth-\ovzs,\ovpth-\ovps)) \\
&=(\ovvth-\varphi_d,\ovvs-\ovvth)_{I\times\Omega}+(\ovdth-d_d,\ovds-\ovdth)_{I\times\Omega}\\
&-(\ovvs-\varphi_d,\ovvs-\ovvth)_{I\times\Omega}-(\ovds-d_d,\ovds-\ovdth)_{I\times\Omega} \\
&=(\ovvth-\ovvs,\ovvs-\ovvth)_{I\times\Omega}+(\ovdth-\ovds,\ovds-\ovdth)_{I\times\Omega}\leq 0.
\end{align*}
Thus, we have
\[\alpha_l\Vert\ovl-\ovlth\Vert^2_{I\times\Omega}\leq(\ovz-\ovzth,\ovlth-\ovl)_{I\times\Omega}.\]
The assertion now follows by applying Cauchy-Schwarz' inequality to the right-hand side in combination with the error estimate for adjoint states from lemma \ref{lem:errorestimateadjoint}.
\end{proof}
We have the following
\begin{corollary} \label{col:errorstateandadjointoptimalcontrol}
Let $(\ovv,\ovd)\in V\times X$ be the solution of the  state equation corresponding to the optimal control $\ovl\in\Lzweizwei$ on the continuous level and let $(\ovvs,\ovds)\in\Vth\times\Xth$ be the solution of the discrete state equation corresponding to the optimal control $\ovlth\in L_\sigma$. Then we have the error estimate
\begin{align*}
\Vert\ovv-\ovvs\Vert_{I\times\Omega} &\leq \Vert \ovv-\ovvth\Vert_{I\times\Omega}+C\Vert\ovl-\ovlth\Vert_{I\times\Omega} \\
\Vert\ovd-\ovds\Vert_{I\times\Omega} &\leq \Vert \ovd-\ovdth\Vert_{I\times\Omega}+C\Vert\ovl-\ovlth\Vert_{I\times\Omega} \\
\end{align*}
with $C>0$ being the same constant as in the stability estimate \eqref{eq:stabeststatetime}.
\end{corollary}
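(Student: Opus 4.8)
The plan is to combine the triangle inequality with the stability estimate for the fully discrete state equation. Write
\[
\ovv-\ovvs=(\ovv-\ovvth)+(\ovvth-\ovvs),\qquad \ovd-\ovds=(\ovd-\ovdth)+(\ovdth-\ovds),
\]
where, as in the statement, $(\ovvth,\ovdth)=S_{\tau h}(\ovl)\in\Vth\times\Xth$ is the fully discrete state associated with the \emph{continuous} optimal control $\ovl$, and $(\ovvs,\ovds)=S_{\tau h}(\ovlth)\in\Vth\times\Xth$ is the fully discrete state associated with the \emph{discrete} optimal control $\ovlth$. The contributions $\Vert\ovv-\ovvth\Vert_{I\times\Omega}$ and $\Vert\ovd-\ovdth\Vert_{I\times\Omega}$ already appear on the right-hand side of the asserted inequalities (and could, if desired, be further estimated by Theorem \ref{thm:overallerrorestimate}), so it only remains to bound $\Vert\ovvth-\ovvs\Vert_{I\times\Omega}$ and $\Vert\ovdth-\ovds\Vert_{I\times\Omega}$ by $C\Vert\ovl-\ovlth\Vert_{I\times\Omega}$.

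To this end I would exploit the linearity of the discrete state equation. Both $(\ovvth,\ovdth)$ and $(\ovvs,\ovds)$ solve \eqref{eq:fullydiscretestate} with the \emph{same} initial datum $d_0$, but with right-hand sides $\ovl$ and $\ovlth$, respectively. Subtracting the two equations and using the bilinearity of $\B$ shows that the difference $(\ovvth-\ovvs,\ovdth-\ovds)\in\Vth\times\Xth$ solves \eqref{eq:fullydiscretestate} with right-hand side $l=\ovl-\ovlth\in\Lzweizwei$ and with vanishing initial datum $d_0=0$ (so that in particular $P^X_h d_0=0$). This is the point where one has to be a bit careful with the bookkeeping of which control each state belongs to.

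Then I would apply the stability estimate for space-time discrete solutions, Theorem \ref{thm:stabestimatesspace}, to this difference. Since $d_0=0$, the estimate \eqref{eq:stabestimatespace} yields in particular
\[
\Vert\ovvth-\ovvs\Vert_{I\times\Omega}+\Vert\ovdth-\ovds\Vert_{I\times\Omega}\leq C\Vert\ovl-\ovlth\Vert_{I\times\Omega},
\]
where $C$ is the constant of that stability estimate; since the space-time discrete stability result was obtained from Theorem \ref{thm:stabeststatetimethm} "with only minor changes", this is the same constant as in \eqref{eq:stabeststatetime}, which is precisely the constant claimed in the statement. Inserting this into the triangle inequality gives the two asserted bounds.

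There is essentially no serious obstacle here: the argument is a routine combination of linearity and the already established discrete stability estimate. The only subtle point is verifying that invoking the fully discrete stability estimate does not alter the relevant constant, i.e. that the $C$ produced really is the one appearing in \eqref{eq:stabeststatetime}, which follows from the remark preceding Theorem \ref{thm:stabestimatesspace}.
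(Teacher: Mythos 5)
Your proof is correct and is essentially the argument the paper intends (the corollary is stated there without proof): a triangle inequality, linearity of the discrete state equation so that $(\ovvth-\ovvs,\ovdth-\ovds)$ solves \eqref{eq:fullydiscretestate} with right-hand side $\ovl-\ovlth$ and vanishing initial datum, and the discrete stability estimate of Theorem \ref{thm:stabestimatesspace}. The only cosmetic point is that the stability estimates are stated for squared norms, so strictly speaking the constant obtained is the square root of the one in \eqref{eq:stabeststatetime}; this is immaterial to the result.
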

\section{Numerical examples}
In this section we present a numerical example and validate the proven rates of convergence for the discretization of our linear model problem for a given right-hand side $l$ as well as for an associated optimal control problem numerically. All the computations have been performed with the finite element tool box FEniCS, see \cite{LM03}. For the mere simulation of the linear model problem, consider the following example with a known solution. We set $\Omega=(0,1)^2$ and $T=1$, that is $I=[0,1]$. Furthermore, the parameters are chosen as $\alpha=1, \beta=1$ and $\delta=0.1$. For the right-hand side
\[l(t,x,y)=\sin(\pi x)\sin(\pi y)\exp(t)(\beta+2\alpha\pi^2)-\beta\frac{\beta}{\beta+\delta}\sin(\pi x)\sin(\pi y)(\exp(t)-\exp(-\frac{\beta}{\delta}t))\]
and the initial condition $d_0=0$ the solution is given by
\begin{align*}
\varphi(t,x,y)&=\sin(\pi x)\sin(\pi y)\exp(t) \\
d(t,x,y)&=\frac{\beta}{\beta+\delta}\sin(\pi x)\sin(\pi y)(\exp(t)-\exp(-\frac{\beta}{\delta}t)).
\end{align*}
We will provide error estimates in two steps. First, we keep the spatial discretization parameter $h$ fixed and refine the temporal discretization parameter $\tau$. In the second part, we fix $\tau$ and decrease the spatial discretization parameter. For simplicity, we use equidistant meshes both in space and time.  
{ \small \begin{center}
\begin{tabular}{|c|c c|c c||c c|c c|}
\hline
$h/\sqrt{2}$&\multicolumn{4}{|c||}{$2^{-8}$}&\multicolumn{4}{|c|}{$2^{-9}$} \\
\hline
$\tau$& $\Vert\varphi-\vth\Vert_{I\times\Omega}$ & EOC & $\Vert d-\dth\Vert_{I\times\Omega}$ & EOC & $\Vert\varphi-\vth\Vert_{I\times\Omega}$ & EOC & $\Vert d-\dth\Vert_{I\times\Omega}$ & EOC \\
\hline
$2^{-3}$ & 0.001617 & -    & 0.033285 & - & 0.001607 & -    & 0.033279 & - \\
$2^{-5}$ & 0.000512 & 0.82 & 0.010362 & 0.84 & 0.000502 & 0.84 & 0.010356 &0.84\\
$2^{-7}$ & 0.000148 & 0.89 & 0.002764 & 0.95 & 0.000136 & 0.94 & 0.002758 & 0.95\\
$2^{-9}$ & 5.54e-05 & 0.71 & 0.000709 & 0.98 & 3.75e-05 & 0.94 & 0.000702 &0.99\\
$2^{-11}$ & 3.75e-05 & 0.28 & 0.000186 & 0.96 & 1.38e-05 & 0.72 & 0.000178 & 0.99\\
$2^{-13}$ & 3.44e-05 & 0.06 & 5.94e-05 & 0.83 & 9.38e-06 & 0.28 & 4.66e-05 & 0.97\\
\hline
\end{tabular}
\title{Refinement of the time steps for $N=66049$(left) and $N=263169$(right) nodes}
\end{center}
\begin{center}
\begin{tabular}{|c|c c|c c||c c|c c|}
\hline
$\tau$&\multicolumn{4}{|c||}{$2^{-9}$}&\multicolumn{4}{|c|}{$2^{-12}$} \\
\hline
$h/\sqrt{2}$& $\Vert\varphi-\vth\Vert_{I\times\Omega}$ & EOC & $\Vert d-\dth\Vert_{I\times\Omega}$ & EOC & $\Vert\varphi-\vth\Vert_{I\times\Omega}$ & EOC & $\Vert d-\dth\Vert_{I\times\Omega}$ & EOC \\
\hline
$2^{-3}$ & 0.032623 & -    & 0.029018 & - & 0.032578 & -    & 0.028826 & - \\
$2^{-4}$ & 0.008523 & 1.93 & 0.007740 & 1.90 & 0.008504 & 1.93 & 0.007541 &1.93\\
$2^{-5}$ & 0.002163 & 1.97 & 0.002195 & 1.81 & 0.002150 & 1.98 & 0.001925 & 1.97\\
$2^{-6}$ & 0.000552 & 1.97 & 0.000947 & 1.21 & 0.000540 & 1.99 & 0.000507 &1.92\\
$2^{-7}$ & 0.000150 & 1.88 & 0.000741 & 0.35 & 0.000136 & 1.99 & 0.000166 & 1.61\\
$2^{-8}$ & 0.000055 & 1.44 & 0.000709 & 0.06 & 0.000035 & 1.96 & 0.000100 & 0.73\\
\hline
\end{tabular}
\title{Refinement of the spatial discretization for $M=512$(left) and $M=4096$(right) time steps}
\end{center}}

\begin{center}
\begin{figure} [h]
{\includegraphics[width=\textwidth]{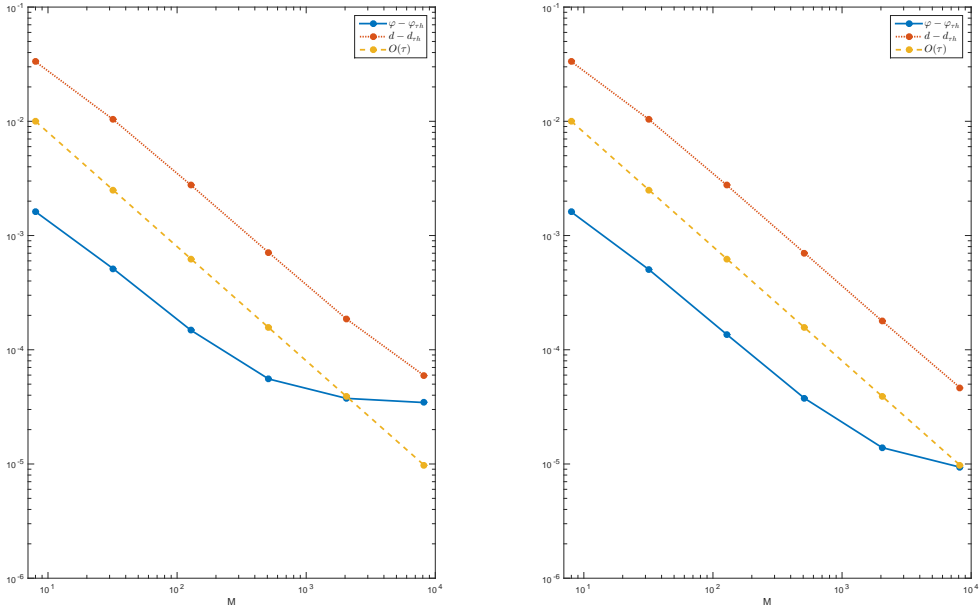} \caption{Refinement of the temporal discretization for $N=66049$(left) and $N=263169$(right) nodes }} 

\end{figure} 
\end{center}
The first table and figure 1 depict the development of the error under refinement of the temporal discretization parameter $\tau$ for two different spatial discretizations. We can see the expected linear convergence in time until the spatial discretization error becomes dominant. Moreover, we find that the error for the function $\varphi$ is smaller than the error for the function $d$ and therefore finer grids in space are needed to illustrate the stated rate of convergence in time for the function $\varphi$. The behavior of the errors switches if we fix the temporal discretization parameter and refine the spatial discretization parameter $h$. In this case we already observe the stated quadratic rate of convergence for the error of the function $\varphi$ on a more coarse time grid while we require a finer discretization in time to validate the spatial rate of convergence also for the error of the function $d$. This behavior is illustrated in the second table and figure 2, respectively. 
\begin{center}
\begin{figure} [h]
{\includegraphics[width=\textwidth]{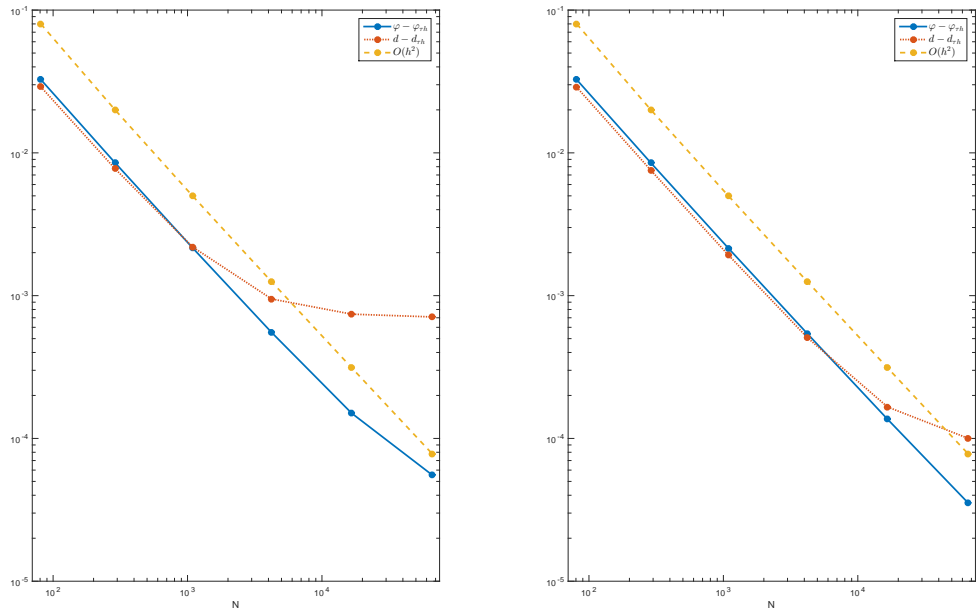} \caption{Refinement of the spatial discretization for $M=512$(left) and $M=4096$(right) time steps }} 

\end{figure} 
\end{center}
Based on this example for the simulation, we now solve an associated optimal control problem by applying a conjugate gradient method to the reduced problem. Therefore, we consider the following objective
\[J(\varphi,d,l)=\frac{1}{2}\Vert \varphi-\varphi_d\Vert^2_{I\times\Omega}+\frac{1}{2}\Vert d-d_d\Vert^2_{I\times\Omega}+\frac{1}{2}\Vert l-l_d\Vert^2_{I\times\Omega}\]
with desired states $\varphi_d=\varphi$ and $d_d=d$ given as above. Usually, $l_d$ is set to zero, but in this case we set $l_d=l$. The presence of $l_d$ alters the variational inequality in the optimality system and therefore changes the gradient of the reduced objective but it has no influence on the error estimation. Adding the function $l_d$ to the objective has the advantage, that the global solution of the optimal control problem is trivial and the minimal objective value is zero. As before we will provide the errors for the optimal control $l$ in two steps. The third table and figure 3 illustrate the proven rate of convergence for the error of the optimal control variable $l$.
{\small \begin{center}
\begin{tabular}{|c|c c|c c||c|c c|c c|}
\hline
$h/\sqrt{2}$&\multicolumn{2}{|c|}{$2^{-7}$}&\multicolumn{2}{|c||}{$2^{-8}$} & $\tau$ & \multicolumn{2}{|c|}{$2^{-9}$} & \multicolumn{2}{|c|}{$2^{-12}$} \\
\hline
$\tau$& $\Vert\ovl-\ovls\Vert_{I\times\Omega}$ & EOC & $\Vert\ovl-\ovls\Vert_{I\times\Omega}$ & EOC & $h/\sqrt{2}$& $\Vert\ovl-\ovls\Vert_{I\times\Omega}$ & EOC & $\Vert\ovl-\ovls\Vert_{I\times\Omega}$ & EOC \\
\hline
$2^{-3}$& 0.001394 & - & 0.001395 & -     &$2^{-3}$ & 0.002830 & -    & 0.002820 & - \\
$2^{-5}$&0.000439 & 0.88 & 0.000436 & 0.83  &$2^{-4}$& 0.000766 & 1.88 & 0.000757 &1.89\\
$2^{-7}$& 0.000118 & 0.94& 0.000114 & 0.96   &$2^{-5}$& 0.000203 & 1.91 & 0.000193 & 1.97\\
$2^{-9}$ & 3.49e-05& 0.88 &3.01e-05 & 0.96  &$2^{-6}$& 6.40e-05 & 1.66 & 5.01e-05 &1.95\\
$2^{-11}$ &1.73e-05 & 0.50& 1.10e-05 & 0.72  &$2^{-7}$& 3.49e-05 & 0.87 & 1.55e-05 & 1.69\\ \hline
\end{tabular}

\title{Left: Refinement of the time steps for $N=16641$ and $N=66049$ nodes, Right: Refinement of the spatial discretization for $M=512$ and $M=4096$ time steps}
\end{center}

\begin{center}
\begin{figure} [h]
{\includegraphics[width=\textwidth]{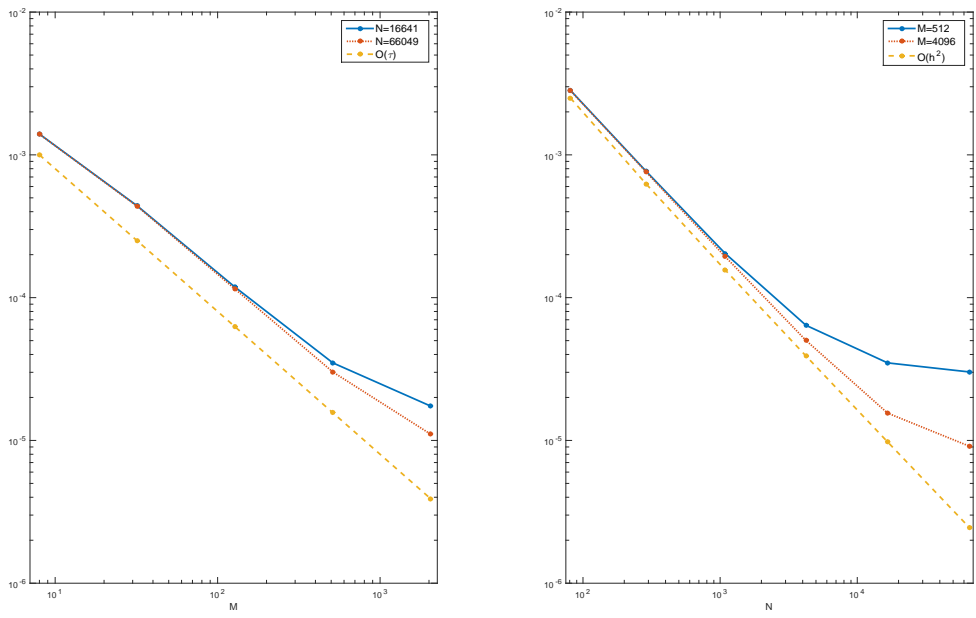} \caption{Errors $\Vert l-l_\sigma\Vert_I$ for different spatial and temporal mesh refinements }} 

\end{figure} 
\end{center}}

\bibliographystyle{abbrv}
\bibliography{Damagepapers}
\end{document}